\documentclass[11pt]{article}

\usepackage{latexsym}
\usepackage{amssymb}
\usepackage{amsthm}
\usepackage{amscd}
\usepackage{amsmath}
\usepackage{tikz}
\usepackage{mathabx}
\usepackage{stmaryrd}

\usepgflibrary{arrows}

\newtheorem{theorem}{Theorem}[section]

\newtheorem{lemma}[theorem]{Lemma}

\newtheorem{proposition}[theorem]{Proposition}
\newtheorem{corollary}[theorem]{Corollary}

\theoremstyle{definition}
\newtheorem*{example}{Example}
\newtheorem*{examples}{Examples}
\newtheorem*{remark}{Remark}
\newtheorem*{remarks}{Remarks}

\numberwithin{equation}{section}

\setlength{\evensidemargin}{1in}
\addtolength{\evensidemargin}{-1in}
\setlength{\oddsidemargin}{1in}
\addtolength{\oddsidemargin}{-1in}
\setlength{\topmargin}{1in}
\addtolength{\topmargin}{-1.5in}

\setlength{\textwidth}{16.5cm}
\setlength{\textheight}{23cm}


\newcommand{\CC}{\mathbb{C}} 
\newcommand{\FF}{\mathbb{F}}

\newcommand{\ZZ}{\mathbb{Z}}
\newcommand{\RR}{\mathbb{R}}

\newcommand{\cA}{\mathcal{A}}
\newcommand{\cB}{\mathcal{B}} 
\newcommand{\cC}{\mathcal{C}}
\newcommand{\cD}{\mathcal{D}}

\newcommand{\cK}{\mathcal{K}}
\newcommand{\cL}{\mathcal{L}}

\newcommand{\cN}{\mathcal{N}}

\newcommand{\cP}{\mathcal{P}}
\newcommand{\cQ}{\mathcal{Q}}
\newcommand{\cR}{\mathcal{R}}
\newcommand{\cS}{\mathcal{S}}
\newcommand{\cT}{\mathcal{T}}

\newcommand{\cX}{\mathcal{X}}

\newcommand{\fkn}{\mathfrak{n}}

\newcommand{\fkut}{\mathfrak{ut}}


\newcommand{\Hom}{\mathrm{Hom}}

\newcommand{\GL}{\mathrm{GL}}
  
\newcommand{\Res}{\mathrm{Res}}

\newcommand{\Irr}{\mathrm{Irr}}

\newcommand{\wt}{\mathrm{wt}}

\newcommand{\Id}{\mathrm{Id}}

\newcommand{\UT}{\mathrm{UT}}

\newcommand{\nst}{\mathrm{nst}}

\newcommand{\Ext}{\mathrm{Ext}}


\newcommand{\dd}{\displaystyle}
\newcommand{\scs}{\scriptstyle}
\newcommand{\scscs}{\scriptscriptstyle}



\def\adots{\mathinner{\mkern2mu\raise0pt\hbox{.}  
\mkern2mu\raise4pt\hbox{.}\mkern1mu
\raise7pt\vbox{\kern7pt\hbox{.}}\mkern1mu}}


\newcommand{\crs}{\mathrm{crs}}

\newcommand{\bdry}{\mathrm{bdry}}
\newcommand{\rank}{\mathrm{rank}}

\newcommand{\levi}{\mathrm{lc}}
\newcommand{\fat}{\mathrm{fat}}

\newcommand{\parb}{\mathrm{pb}}
\newcommand{\qbin}[2]{\genfrac{[}{]}{0pt}{}{#1}{#2}_{q}}

\newcommand{\loc}{\mathrm{loc}}
\newcommand{\rowsupp}{\mathrm{rsp}}
\newcommand{\colsupp}{\mathrm{csp}}
\newcommand{\sInt}{\mathrm{sInt}}

\makeatletter
\renewcommand{\@makefnmark}{\mbox{\textsuperscript{}}}
\makeatother

\allowdisplaybreaks[1]

\begin{document}

\title{Supercharacter theories of type $A$ unipotent \\ radicals and unipotent polytopes}

\author{Nathaniel Thiem\\ University of Colorado \textbf{Boulder}}

\date{}

\maketitle

\begin{abstract}
Even with the introduction of supercharacter theories, the representation theory of many unipotent groups remains mysterious.   This paper constructs a family of supercharacter theories for normal pattern groups in a way that exhibit many of the combinatorial properties of the set partition combinatorics of the full uni-triangular groups, including combinatorial indexing sets, dimensions, and computable character formulas.  Associated with these supercharacter theories is also a family of polytopes whose integer lattice points give the theories geometric underpinnings.  
\end{abstract}

\section{Introduction}

Supercharacter theory has infused the representation theory of unipotent groups with the combinatorics of set partitions.  Specifically, set partitions index the supercharacters of the maximal unipotent upper-triangular subgroup $\UT$ of the finite general linear group $\GL$ \cite{An,Ya}, and similar theories exist for the maximal unipotent subgroups of other finite reductive groups \cite{AFN,Aw}.  However, while there are  supercharacter theories for other unipotent groups, they do not generally exhibit this computable and combinatorial nature.   This paper seeks to define a natural family of supercharacter theories for the normal pattern subgroups of $\UT$.   As an added bonus, we not only obtain a combinatorial description for these theories, but also gain geometric underpinnings coming from a family of integral polytopes.

Diaconis--Isaacs defined a supercharacter theory of a finite group $G$ as a direct analogue of its character theory, where they replacing conjugacy classes with superclasses and irreducible characters with supercharacters \cite{DI}.  Their approach is based on Andr\'e's adaption of the Kirillov orbit method to study $\UT$, and the underlying axioms are calibrated to preserve as many properties of irreducible characters and conjugacy classes as possible.  For example, the supercharacters are an orthogonal (but not generally orthonormal) basis for the space of functions that are constant on superclasses.  This definition has given us new approaches to groups whose representation theories are known to be difficult (eg. unipotent groups).  Not only can these new theories be combinatorially striking \cite{AIM}, but they can also be used in place of the usual character theory \cite{ADS} in applications, they give a starting point in studying difficult theories \cite{DG}, or give character theoretic foundations for number theoretic identities (eg. \cite{BBFGGKMTS,FoGK}).  

The supercharacter theories of this paper are fundamentally based on Andr\'e's original construction for $\UT$ \cite{An} and Diaconis--Isaacs' later generalization to algebra groups \cite{DI}.  These constructions use two-sided orbits in the dual space $\fkut^*$ of the corresponding Lie algebra $\fkut$ to construct the supercharacters.  In the algebra group case the group $\UT$ acts on $\fkut^*$ by left and right multiplication (technically pre-composition by left and right multiplication on $\fkut$).  In this paper we modify this construction by instead acting by parabolic subgroups of $\GL$.  The resulting theory is coarser but far more combinatorial in nature.    In particular, we obtain statistics such as dimension, nestings and crossings that generalize the corresponding set partition statistics \cite{CPKR}, and in Theorem \ref{FullFormula} we give a character formulas with a ``factorization" analogous to the well-known $\UT$-cases.

For each supercharacter theory there is an associated polytope whose integer lattice points index the supercharacters of the theory.  Thus, the supercharacter theories could in principle give a categorified version of the Ehrhardt polynomials of these polytopes.  These polytopes include all transportation polytopes \cite{DK}, and may be viewed as a family of subfaces of transportation polytopes.  This point of view not only gives a geometric approach to these supercharacter theories, but it also re-interprets set partitions as vertices of a polytope.  Since I am unaware of other contexts where these polytopes may have been studied, I will refer to them as \emph{unipotent polytopes}.  At present we do not understand the significance of this geometry in the representation theory of unipotent groups, and this seems to be a promising direction for future work.

Section \ref{SectionPreliminaries} reviews some of the background material on unipotent groups and supercharacter theories.  Section \ref{SectionGroups} defines the particular unipotent groups we will focus on.  Pattern groups arise naturally in context of groups of Lie type, since they are unipotent groups invariant under conjugation by a maximal split torus.    The unipotent groups we consider are effectively a block-analogue to pattern groups where we define a notion of invariance under the action of a fixed Levi subgroup.  Section \ref{SectionSupercharacterTheories} shows how to use this Levi subgroup to construct supercharacter theories for the group, and shows how the supercharacters (and superclasses) are indexed by the $\ZZ_{\geq 0}$-lattice points of a polytope.  Section \ref{SectionSupercharacterFormulas} computes the supercharacter formulas for these theories, and discusses some consequences of these results.

In addition to teasing out the geometric implications of the underlying polytopes, this paper also gives a framework for studying random walks on the integer lattice points of unipotent polytopes, and representation theoretic Hopf structures on them.  However, both these directions are beyond the scope of this paper.

\subsubsection*{Acknowledgements}

The author would like to thank H. Hausmann for pointing out the connection between the combinatorics of bounded row and column sums and integer polytopes, and to K. Meszaros for directing me to transportation polytopes.  Also, thanks to M. Aguiar for (a) helping work out the corresponding monoid structure and (b) convincing me it is too involved for this paper.
The  work was also partially supported by NSA H98230-13-0203.

\section{Preliminaries} \label{SectionPreliminaries}

This paper is primarily concerned with unipotent subgroups of the finite general linear groups.  It is standard to index the rows and columns of the corresponding matrices by the set $\{1,2,\ldots, N\}$ in the usual total order, but recent work \cite{ABT} has suggested that it is better to instead allow an arbitrary set of size $N$ to index the rows and columns and fix a total order $\cN$ on that set.  However, the paper may be easily read with $\cN$ as the total order $1<2<\cdots < N$. 

This section reviews the relevant unipotent groups, a combinatorial interpretation of normality, and some of the standard supercharacter theories for these groups.  The last section gives a quick refresher of $q$-binomial coefficients.

\subsection{Subgroups of Lie type}

Let $\cN$ be a fixed total order of a finite set with $N$ elements and fix a finite field $\FF_q$ with $q$ elements.  Let $\GL_\cN$ denote the finite general linear group on matrices with rows and columns indexed by our finite set in the order dictated by $\cN$.  If $\mathrm{char}(\FF_q)=p$, then a Sylow $p$-subgroup of $\GL_\cN$ is the subgroup of unipotent upper-triangular matrices 
$$\UT_\cN=\{g\in \GL_\cN\mid (g-\Id_N)_{ij}\neq 0 \text{ implies } i\prec_\cN j\}.$$
The normalizer in $\GL_\cN$ of $\UT_\cN$  is the Borel subgroup 
$$B_\cN=\{g\in \GL_\cN\mid g_{ij}\neq 0 \text{ implies } i\preceq_\cN j\}=N_{\GL_\cN}(\UT_\cN).$$

Let 
$$\fkut_\cN=\UT_\cN-\Id_N$$
denote the corresponding nilpotent $\FF_q$-algebra.  If $\fkn\subseteq \fkut_\cN$ is any subalgebra, then we obtain a subgroup $\Id_N+\fkn\subseteq \UT_\cN$ called and \emph{algebra subgroup}.  If $\cP$ is a subposet of $\cN$ on the same underlying set, then we call the algebra subgroup
$$\UT_\cP=\Id_N+\fkut_\cP\subseteq \UT_\cN, \qquad \text{where} \qquad \fkut_\cP=\{x\in \fkut_\cN\mid x_{ij}\neq 0 \text{ implies } i\prec_{\cP} j\},$$
a \emph{pattern subgroup} of $\UT_\cN$.  Note that transitivity in the poset $\cP$ exactly implies that this $\UT_\cP$ is closed under multiplication.

\subsection{Normal posets}\label{SectionNormalPosets}

In general, a subposet $\cP$ of a poset $\cQ$ does not give a normal subgroup $\UT_\cP$ of $\UT_\cQ$.  However, there is a straight-forward condition on the poset that characterizes this group theoretic condition: a subposet $\cP\subseteq \cQ$ is \emph{normal} if $j\prec_\cP k$ implies $i\prec_\cP k$ and $j\prec_\cP l$ for all $i\prec_{\cQ}j$ and $k\prec_\cQ l$.
In this case, we write $\cP\triangleleft \cQ$.  

Alternatively, if $\sInt(\cQ)$ is the strict interval poset on the set $\{(i,j)\mid i\prec_\cQ j\}$ given by $(j,k)\preceq_{\sInt(\cQ)} (i,l)$ if and only if $i\preceq_\cQ j\prec_\cQ k\preceq_{\cQ} l$, then $\cP$ is normal in $\cQ$ if and only if $\sInt(\cP)$ is a dual order ideal of $\sInt(\cQ)$.  In fact, in this case, $\sInt(\cP)$ exactly gives the coordinates of the Ferrer's shape $F$ or the coordinates allowed to be nonzero in $\UT_{\cP}$.   For example, 
$$
\begin{tikzpicture}[scale=1,baseline=1cm]
\foreach \x/\y/\z in {0/0/1,1/1/2,0/1/3,0/2/4,1/2/5}
	\node (\z) at (\x,\y) [inner sep = 1pt] {$\scs\z$};
\foreach \a/\b in {1/3,2/4,2/5,3/4,3/5}
	\draw (\a) -- (\b);
\end{tikzpicture}
\subseteq
\begin{tikzpicture}[scale=.7,baseline=1cm]
\foreach \x/\y/\z in {0/0/1,0/1/2,0/2/3,0/3/4,0/4/5}
	\node (\z) at (\x,\y) [inner sep = 1pt] {$\scs\z$};
\foreach \a/\b in {1/2,2/3,3/4,4/5}
	\draw (\a) -- (\b);
\end{tikzpicture}
\qquad \text{in interval posets is}\qquad 
\begin{tikzpicture}[scale=1,baseline=2cm]
\foreach \x/\y in {1/3,1/4,1/5,2/4,2/5,3/4,3/5}
	\fill[gray!20!white] (\x+\y,\y-\x)+(-.6,-.3) rectangle ++(.6,.3);
\foreach \x/\y in {1/2,1/3,1/4,1/5,2/3,2/4,2/5,3/4,3/5,4/5}
	\node (\x\y) at (\x+\y,\y-\x) {$(\x,\y)$};
	\draw (12) -- (13) -- (14) -- (15) -- (25) -- (35) -- (45);
	\draw (13) -- (23) -- (24) -- (25);
	\draw (35) -- (34) -- (24) -- (14);
\end{tikzpicture}
$$
where the boxed entries are correspond to the subposet.

There are a number of combinatorial interpretations of normal posets of the total order $\cN$.  For $N\in \ZZ_{\geq 0}$, let $D_N$ denote the Ferrer's shape $(N-1,N-2,\ldots, 1)$, where we right justify the rows.  For example, 
$$D_5=\begin{tikzpicture}[scale=.3,baseline=-.5cm]
\foreach \y in {0,...,3}
	{\foreach \x in {\y,...,3}
		\draw (\x,-\y)+(-.5,-.5) rectangle ++(.5,.5);}
\end{tikzpicture}\ .$$

\begin{proposition}\label{NormalPosetInterpretations}
There are bijections
$$\begin{array}{c@{\ }c@{\ }c@{\ }c@{\ }c}\left\{\begin{array}{@{}c@{}} \text{Dyck paths from}\\ \text{ $(0,0)$ to $(2N,-2N)$} \end{array}\right\} & \longleftrightarrow & \left\{\begin{array}{@{}c@{}} \text{normal sub-}\\ \text{posets  of $\cN$}\end{array}\right\} & \longleftrightarrow & \left\{\begin{array}{@{}c@{}} \text{sub-Ferrers}\\ \text{shapes of $D_N$}\end{array}\right\}\\ 
 d_\cP & \mapsfrom & \cP & \mapsto & F_\cP.\end{array}$$
where $(2i-1,2j-1)$ is NorthEast of $d_\cP$ if and only if $i\prec_{\cP} j$ if and only if $(i,j)\in F_\cP$.
\end{proposition}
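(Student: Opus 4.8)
The plan is to establish the two bijections by unwinding the definitions of ``normal subposet'' and ``sub-Ferrers shape of $D_N$,'' and then verifying that the claimed correspondences are mutually inverse. I would phrase everything in terms of the middle object (normal subposets of $\cN$) and build explicit maps to and from each of the two outer objects.

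\medskip

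\noindent\textbf{Step 1: normal subposets $\leftrightarrow$ sub-Ferrers shapes of $D_N$.} Given a normal subposet $\cP\subseteq\cN$, set $F_\cP=\{(i,j)\mid i\prec_\cP j\}$, thought of as a subset of the boxes of $D_N$ via the coordinate convention already fixed in the paragraph on $\sInt(\cQ)$ (where $\sInt(\cP)$ is a dual order ideal of $\sInt(\cN)$). The key observation, which is essentially the content of the remark preceding the proposition, is that $\cP\triangleleft\cN$ is equivalent to $\sInt(\cP)$ being a dual order ideal of $\sInt(\cN)$, and that for the total order $\cN$ the poset $\sInt(\cN)$ is isomorphic to the box poset of $D_N$ with the Ferrers (componentwise) order. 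Under this isomorphism dual order ideals of $\sInt(\cN)$ are exactly the sub-Ferrers shapes of $D_N$. So I would: (a) check $\sInt(\cN)\cong D_N$ as posets explicitly via $(i,j)\mapsto$ its box, noting that $(j,k)\preceq_{\sInt(\cN)}(i,l)$ iff $i\le j<k\le l$ translates to the containment order on boxes of the staircase; (b) observe that ``$S\subseteq D_N$ is a Ferrers shape'' means precisely ``$S$ is a down-set for the Ferrers order restricted to $D_N$,'' equivalently a dual order ideal of $\sInt(\cN)$ after the order reversal built into the $\sInt$ convention; (c) conclude transitivity of $\cP$ (i.e.\ $\cP$ is a genuine subposet, closed under the inherited relation) together with normality is exactly the condition that $F_\cP$ is such a shape, and that $\cP\mapsto F_\cP$ and $F\mapsto\{(i,j)\mid (i,j)\in F\}$ are inverse bijections. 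This is routine once the poset isomorphism is pinned down.

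\medskip

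\noindent\textbf{Step 2: normal subposets $\leftrightarrow$ Dyck paths from $(0,0)$ to $(2N,-2N)$.} A (sub-)Ferrers shape inside the staircase $D_N$ is the same data as a lattice path: the boundary of $F_\cP$, read from the top-left corner of $D_N$ to the bottom-right, is a path of unit steps that stays weakly below the staircase boundary of $D_N$, which is exactly a Dyck-type path (here scaled so the full staircase corresponds to the path hugging the diagonal, ending at $(2N,-2N)$ with the sign/orientation conventions of the paper). Concretely, I would define $d_\cP$ to be the lower-right boundary path of the region $F_\cP$ inside $D_N$, using $N$ down-steps and $N$ right-steps (or their $2N$-scaled versions), and verify the stated incidence: the box in row $i$, column $j$ lies in $F_\cP$ iff the lattice point $(2i-1,2j-1)$ is northeast of $d_\cP$. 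Then I would check this is a bijection onto all such Dyck paths by exhibiting the inverse: from a Dyck path $d$, recover $F$ as the set of boxes of $D_N$ northeast of $d$, then recover $\cP$ by declaring $i\prec_\cP j$ iff $(i,j)\in F$, and confirm that the Dyck condition (staying below the diagonal) forces $\sInt$-dual-order-ideal, hence $\cP\triangleleft\cN$ by Step 1.

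\medskip

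\noindent\textbf{Main obstacle.} None of the steps is deep; the real work is bookkeeping, and the step I expect to be most error-prone is matching the three coordinate systems — the $(i,j)$ labels on $\sInt(\cN)$, the matrix-box coordinates of $D_N$ (right-justified rows), and the $(2i-1,2j-1)$/$(2N,-2N)$ scaling for Dyck paths — so that the stated incidence ``$(2i-1,2j-1)$ NE of $d_\cP$ $\iff$ $i\prec_\cP j$ $\iff$ $(i,j)\in F_\cP$'' holds on the nose, including the boundary/edge cases ($i$ and $j$ adjacent, or $i,j$ extremal). I would handle this by first doing the $N=5$ example that the paper has already drawn, checking the incidence box-by-box there, and only then stating the general bijection, so that the conventions are forced to be self-consistent. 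Everything else (that normality $\Leftrightarrow$ dual order ideal, that sub-Ferrers $\Leftrightarrow$ down-set, that the maps compose to the identity) reduces to the definitions recalled in Section~\ref{SectionNormalPosets}.
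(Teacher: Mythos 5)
Your approach is exactly the one the paper relies on: the paper gives no explicit proof of Proposition \ref{NormalPosetInterpretations}, treating it as immediate from the preceding discussion that $\cP\triangleleft\cN$ iff $\sInt(\cP)$ is a dual order ideal of $\sInt(\cN)$, that these ideals are precisely the sub-Ferrers shapes of $D_N$, and that such a shape is encoded by its boundary Dyck path — which is precisely the chain of identifications you lay out. Your plan is correct, and your flagged concern (matching the three coordinate conventions, checked on the $N=5$ example) is indeed the only bookkeeping the paper leaves to the reader.
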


\begin{example}
For example, if $2N=10$, then
$$
\begin{tikzpicture}[scale=.3,baseline=-1.5cm]
\fill[gray!20!white] (7,-1)+(-3,-1) rectangle ++(3,1);
\fill[gray!20!white] (8,-4)+(-2,-2) rectangle ++(2,2);
\foreach \x in {0,...,5}
	{\foreach \y in {0,...,\x}
		\node[gray] (\x\y) at (2*\x,-2*\y) [inner sep = -1pt] {$\scscs\bullet$};
		}
\foreach \x in {1,...,4}
	{\foreach \y in {1,...,\x}
		\node at (2*\x+1,-2*\y+1) {$\bullet$};	
		}
\foreach \x in {1,...,5}
	\node[gray] at (2*\x-1,-2*\x+1) {$\scs\x$};
\foreach \a/\b/\c/\d in {0/0/2/0,2/0/2/1,2/1/3/1,3/1/3/3,3/3/5/3,5/3/5/5}
	\draw (\a\b)--(\c\d);
\end{tikzpicture}
\quad\longleftrightarrow\quad
\begin{tikzpicture}[scale=1,baseline=1cm]
\foreach \x/\y/\z in {0/0/1,1/1/2,0/1/3,0/2/4,1/2/5}
	\node (\z) at (\x,\y) [inner sep = 1pt] {$\scs\z$};
\foreach \a/\b in {1/3,2/4,2/5,3/4,3/5}
	\draw (\a) -- (\b);
\end{tikzpicture}
\quad\longleftrightarrow \quad
\begin{tikzpicture}[scale=.5,baseline=-1cm]
\foreach \y in {0,...,3}
	{\foreach \x in {\y,...,3}
		\draw[gray] (\x,-\y)+(-.5,-.5) rectangle ++(.5,.5);}
\foreach \x/\y in {1/0,2/0,3/0,2/1,3/1,2/2,3/2}
	\draw[thick] (\x,-\y)+(-.5,-.5) rectangle ++(.5,.5);
\end{tikzpicture}\
$$
where the shaded region accentuates the relevant points NorthEast of the Dyck path.   
\end{example}

\subsection{Supercharacter theories of unipotent groups}\label{SectionSupercharacters}

Supercharacter theories for finite groups were first defined in \cite{DI}, generalizing work by Andr\'e studying representations of $\UT_\cN$ (a series of papers starting with \cite{An}).  There are numerous equivalent formulations of a supercharacter theory, but the following seems most suitable for the purposes of this paper.

A \emph{supercharacter theory} $(\cK,\cX)$ of a finite group $G$ is a pair, where $\cK$ is a partition of $G$ and $\cX$ is a set of characters, such that 
\begin{description}
\item[(SC0)] The number of blocks of $\cK$ is the same as the number of elements in $\cX$.
\item[(SC1)] Each block $K\in \cK$ is a union of conjugacy classes.
\item[(SC2)] The set 
$$\cX\subseteq \{\theta:G\rightarrow \CC\mid \theta(g)=\theta(h), g,h\in K, K\in \cK\}.$$
\item[(SC3)] Each irreducible character of $G$ is the constituent of exactly one element in $\cX$.
\end{description}
We refer to the blocks of $\cK$ as \emph{superclasses} and the elements of $\cX$ as \emph{supercharacters}.

While we have many ways of constructing supercharacter theories, general constructions are not well-understood.  That is, given a finite groups, it is a hard problem to determine its supercharacter theories.   Some groups have remarkably few supercharacter theories, such as the symplectic group $\mathrm{Sp}_6(\FF_2)$ with exactly 2 \cite{BLLW}, and some groups have surprisingly many, such as $C_3\times C_6$ with 297 distinct supercharacter theories.  However, for this paper we follow the basic strategy laid out by \cite{DI} for algebra groups.

Let $\Id_N+\fkn\subseteq \UT_\cN$ be an algebra subgroup.  Then $\Id_N+\fkn$ acts on both $\fkn$ and its vector space dual $\fkn^*$ by left and right multiplication, where
$$(a\cdot y\cdot b)(x)=y(a^{-1}yb^{-1}), \qquad \text{for $a,b\in \Id_N+\fkn$, $x\in \fkn$, $y\in \fkn^*$.}$$
Fix a nontrivial homomorphism $\vartheta:\FF_q^+\rightarrow \GL_1(\CC)\cong \CC^\times$.  In this situation \cite{DI} define a supercharacter theory given by
\begin{description}
\item[$AG$-superclasses of $\Id_N+\fkn$.]   The set partition $\{\Id_N+(\Id_N+\fkn)x(\Id_N+\fkn)\mid x\in \fkn\}$
of $\Id_N+\fkn$.
\item[$AG$-supercharacters of $\Id_N+\fkn$.] The set of characters
$$\Big\{\chi_{AG}^y=\frac{|(\Id_N+\fkn)y|}{|(\Id_N+\fkn)y(\Id_N+\fkn)|} \sum_{z\in (\Id_N+\fkn)y(\Id_N+\fkn)} \vartheta\circ z\mid y\in \fkn^*\Big\}.$$
\end{description}
\begin{remark}
In the case where $\fkn=\fkut_\cN$, this supercharacter theory gives a nice combinatorial theory developed algebraically by Andr\'e \cite{An} and more combinatorially by Yan \cite{Ya}.   However, in general even this supercharacter theory may be wild for algebra subgroups.  In fact, we do not even understand it for pattern subgroups.  
\end{remark}

For the purposes of our generalization, there is a slight coarsening of the $AG$-supercharacter theory for $\UT_\cN$ called the $B_\cN$-supercharacter theory that exists because $\fkut_\cN$ and $\fkut_\cN^*$ are in fact permuted by left and right multiplication by $B_\cN$.
\begin{description}
\item[$B_\cN$-superclasses of $\UT_\cN$.]   The set partition $\{\Id_N+B_\cN x B_\cN\mid x\in \fkut_\cN\}$ of $\UT_\cN$.
\item[$B_\cN$-supercharacters of $\UT_\cN$.] The set of characters
$$\Big\{\chi_{\cN}^y=\sum_{z\in B_\cN y B_\cN} \vartheta\circ z\mid y\in \fkut_\cN^*\Big\}.$$
\end{description}
In this case, the supercharacters and superclasses are indexed by set partitions of the underlying set.  In the following sections it is best to view set partitions as functions $\lambda:\sInt(\cN)\rightarrow \{0,1\}$ such that for $\lambda_{ik}=1=\lambda_{jl}$, we have $i=j$ if and only if $k=l$ (or placements of non-attacking rooks on a triangular chessboard).  The primary purpose of this paper is to generalize this set-up to nice families of pattern groups.  
\begin{remark}
This version of the supercharacters is scaled slightly from the conventional choice.  That is, usually each character $\chi_{\cN}^y$ is multiplied by 
$$ \frac{|\UT_{\cN} y|}{|\UT_{\cN} y \UT_{\cN}|}.$$ 
This scaling removes some excessive multiplicities.  However, in this paper the ``best" scaling factor for the supercharacters below is not clear, so the paper is written with them removed entirely.  This choice implies that in fact
$$\chi_{\cN}^y=\sum_{\psi\in X_y} \psi(1)\psi$$
for some set of irreducible characters $X_y$.
\end{remark}

\subsection{$q$-Binomials and weights}

The symbol $q$ will generally be the size of a finite field, but for this section may treat $q$ as an indeterminate.  For $n,k\in\ZZ_{\geq 0}$, let
$$\qbin{n}{k}=\frac{[n]!}{[k]![n-k]!},\qquad \text{where}\qquad [n]!=[n][n-1]\cdots [2][1] \quad \text{and}\quad [n]=\frac{q^n-1}{q-1}.$$ 
In this subsection, we review some other interpretations and results associated with these $q$-binomial coefficients.  

Let $S_n$ be the symmetric group on $n$ letters.  Then
$$[n]!=\sum_{w\in S_n} q^{\mathrm{inv}(w)},\qquad \text{where}\qquad \mathrm{inv}(w)=\#\{1\leq i<j\leq n\mid w(i)>w(j)\}.$$
In particular, it will be useful to note that since $[n]!$ is palindromic of degree $\binom{n}{2}$,
\begin{equation}\label{InvolutionSum}
\sum_{w\in S_n} \frac{1}{q^{\mathrm{inv}(w)}}=\frac{1}{q^{\binom{n}{2}}} [n]!.
\end{equation}

Let $B\subseteq C$, where $C$ is a set with a total order $\cC$.  Let
$$\begin{array}{rccc} \wt^\uparrow_B:&\{A\subseteq C\} & \longrightarrow & \ZZ_{\geq 0}\\ & A & \mapsto & \#\{(a,b)\in A\times B\mid a\prec_\cC b\},\end{array}$$
and for $A,B\subseteq C$, let
$$\wt^\downarrow_B(A)=\wt^\uparrow_A(B).$$
Then
\begin{equation}\label{WeightqBinomial}
\qbin{n}{k}=q^{\binom{k}{2}}\sum_{A\subseteq \{1,\ldots, n\}\atop |A|=k} q^{\wt_{\{1,\ldots,n\}}^\uparrow (A)}=q^{\binom{k}{2}}\sum_{A\subseteq \{1,\ldots, n\}\atop |A|=k} q^{\wt_{\{1,\ldots,n\}}^\downarrow (A)}.
\end{equation}

\section{A parabolic generalization of pattern subgroups} \label{SectionGroups}

In this section, we build a family of pattern subgroups of $\UT_\cN$; they will all be normal, and each will have a family of supercharacter theories, defined in Section \ref{SectionSupercharacterTheories}.  The choice of a subgroup with an associated supercharacter theory will determine a polytope, giving the theory a geometric foundation.  

\subsection{Parabolic posets and $\UT_\beta$}

We begin by defining a family of unipotent groups that appear naturally in the theory of reductive groups, the unipotent radicals of parabolic subgroups.  It turns out that for $\GL_\cN$, these unipotent groups are pattern groups and their associated posets are easy to characterize.   In the Section \ref{SectionSupercharacterTheories}, each unipotent radical $\UT_\beta$ will determine a family of supercharacter theories.

A subposet $\cQ$ is \emph{parabolic} in $\cN$ if there exists a set composition $(\cQ_1, \cQ_2, \ldots, \cQ_\ell)$ of the underlying set such that $a\prec_\cQ b$ if and only if  $a\in \cQ_i$ and $b\in \cQ_j$ with $i<j$.  These subposets are necessarily normal, where the corresponding Dyck path always returns down to the diagonal before moving right again.  We will write $\cQ\triangleleft_\parb \cN$.  For example, if $\cN$ is $1<2<3<4$, then the parabolic subposets (with associated set compositions) are 
$$\begin{array}{ccccccc}
\begin{tikzpicture}[scale=.5,baseline=.75cm]
\foreach \x/\y/\z in {0/0/1,0/1/2,0/2/3,0/3/4}
	\node (\z) at (\x,\y) [inner sep = 1pt] {$\z$};
\foreach \y/\z in {1/2,2/3,3/4}
	\draw (\y) -- (\z);
\end{tikzpicture}\ , &
\begin{tikzpicture}[scale=.5,baseline=.5cm]
\foreach \x/\y/\z in {0/0/1,2/0/2,1/1/3,1/2/4}
	\node (\z) at (\x,\y) [inner sep = 1pt] {$\z$};
\foreach \y/\z in {1/3,2/3,3/4}
	\draw (\y) -- (\z);
\end{tikzpicture}, &
\begin{tikzpicture}[scale=.5,baseline=.5cm]
\foreach \x/\y/\z in {1/0/1,0/1/2,2/1/3,1/2/4}
	\node (\z) at (\x,\y) [inner sep = 1pt] {$\z$};
\foreach \y/\z in {1/2,1/3,2/4,3/4}
	\draw (\y) -- (\z);
\end{tikzpicture}, &
\begin{tikzpicture}[scale=.5,baseline=.5cm]
\foreach \x/\y/\z in {1/0/1,1/1/2,0/2/3,2/2/4}
	\node (\z) at (\x,\y) [inner sep = 1pt] {$\z$};
\foreach \y/\z in {1/2,2/3,2/4}
	\draw (\y) -- (\z);
\end{tikzpicture}, &
\begin{tikzpicture}[scale=.5,baseline=.25cm]
\foreach \x/\y/\z in {0/0/1,1/0/2,2/0/3,1/1/4}
	\node (\z) at (\x,\y) [inner sep = 1pt] {$\z$};
\foreach \y/\z in {1/4,2/4,3/4}
	\draw (\y) -- (\z);
\end{tikzpicture},
&
\begin{tikzpicture}[scale=.5,baseline=.25cm]
\foreach \x/\y/\z in {1/0/1,0/1/2,1/1/3,2/1/4}
	\node (\z) at (\x,\y) [inner sep = 1pt] {$\z$};
\foreach \y/\z in {1/2,1/3,1/4}
	\draw (\y) -- (\z);
\end{tikzpicture},
&
\begin{tikzpicture}[scale=.5,baseline=0cm]
\foreach \x/\y/\z in {0/0/1,1/0/2,2/0/3,3/0/4}
	\node (\z) at (\x,\y) [inner sep = 1pt] {$\z$};
\end{tikzpicture}.\\
\scs(\{1\},\{2\},\{3\},\{4\},\{5\}) & \scs (\{1,2\},\{3\},\{4\}) & \scs(\{1\},\{2,3\},\{4\}) & \scs (\{1\},\{2\},\{3,4\}) & \scs(\{1,2,3\},\{4\}) & \scs(\{1\},\{2,3,4\}) & \scs (\{1,2,3,4\})

\end{array}
$$
Since given a total order $\cN$ the sizes of the blocks of the set composition completely determines $\cQ$, we deduce the following proposition.

\begin{proposition}\label{BoundaryMap}
There is a bijection
$$\begin{array}{r@{\ }c@{\ }c@{\ }c}\bdry:  &\left\{\begin{array}{@{}c@{}} \text{Parabolic sub-}\\ \text{posets of $\cN$}\end{array}\right\} & \longrightarrow & \left\{\begin{array}{@{}c@{}}\text{integer com-}\\ \text{positions of $N$}\end{array}\right\}\\ 
&\cQ & \mapsto & (|\cQ_1|,\ldots, |\cQ_\ell|).\end{array}$$
\end{proposition}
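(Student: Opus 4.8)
The plan is to exhibit an explicit inverse to $\bdry$ and check the two maps compose to the identity in both directions; the only real content is the observation that being a \emph{subposet} of the total order $\cN$ pins down the set composition attached to a parabolic poset uniquely, so that only its sequence of block sizes is genuine extra data.

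First I would record the forward direction carefully. Let $\cQ\triangleleft_\parb\cN$ with set composition $(\cQ_1,\ldots,\cQ_\ell)$, all parts nonempty as usual. Since $\cQ$ is a subposet of $\cN$, every $\cQ$-relation is an $\cN$-relation, so whenever $i<j$ every $a\in\cQ_i$ satisfies $a\prec_\cN b$ for every $b\in\cQ_j$. Hence in the total order $\cN$ all of $\cQ_1$ precedes all of $\cQ_2$ precedes $\cdots$ precedes all of $\cQ_\ell$, which forces $\cQ_i$ to be the block of $|\cQ_i|$ consecutive elements of $\cN$ occupying positions $|\cQ_1|+\cdots+|\cQ_{i-1}|+1$ through $|\cQ_1|+\cdots+|\cQ_i|$. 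In other words $(\cQ_1,\ldots,\cQ_\ell)$ is the unique set composition of the underlying set into consecutive $\cN$-intervals whose sequence of sizes is $\bdry(\cQ)=(|\cQ_1|,\ldots,|\cQ_\ell|)$; in particular $\bdry(\cQ)$ is an honest integer composition of $N$, so $\bdry$ is well defined.

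Next I would build the candidate inverse. Given an integer composition $\mu=(\mu_1,\ldots,\mu_\ell)$ of $N$, cut the underlying set of $\cN$ into the consecutive $\cN$-intervals $\cN^\mu_1,\ldots,\cN^\mu_\ell$ with $|\cN^\mu_i|=\mu_i$, and let $\cQ^\mu$ be the poset with $a\prec_{\cQ^\mu}b$ if and only if $a\in\cN^\mu_i$, $b\in\cN^\mu_j$ and $i<j$. By construction $\cQ^\mu$ is parabolic with set composition $(\cN^\mu_1,\ldots,\cN^\mu_\ell)$, and since consecutive intervals of $\cN$ are $\cN$-ordered it really is a subposet of $\cN$ (and automatically normal, as noted in the text). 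Clearly $\bdry(\cQ^\mu)=\mu$, so $\bdry$ is surjective, and $\mu\mapsto\cQ^\mu$ is a one-sided inverse.

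Finally I would check $\cQ\mapsto\cQ^{\bdry(\cQ)}$ is the identity. A parabolic poset is determined by its set composition: incomparability in $\cQ$ (together with reflexivity) is the equivalence relation whose classes are exactly the $\cQ_i$, and the induced order on these classes records their order in the composition. So it suffices to see $\cQ$ and $\cQ^{\bdry(\cQ)}$ have the same set composition; by the first step the set composition of $\cQ$ is the consecutive one with sizes $\bdry(\cQ)$, which is precisely the set composition of $\cQ^{\bdry(\cQ)}$. Hence $\bdry$ is a bijection. I expect no serious obstacle: the one point needing care is the claim in the first step that the subposet condition forces the blocks to be $\cN$-consecutive, plus the minor bookkeeping that set compositions (hence the $\cQ_i$) have nonempty parts so that the set composition is recoverable from the poset; everything else is routine.
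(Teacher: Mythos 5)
Your proof is correct, and it is essentially the paper's argument made explicit: the paper disposes of this proposition with the one-line observation that, given the total order $\cN$, the block sizes of the set composition completely determine $\cQ$, which is exactly your key step that the blocks must be consecutive $\cN$-intervals (plus the routine verification that the composition is recoverable from the parabolic poset). Nothing further is needed.
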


For an integer composition $\beta\vDash N$ and an underlying total order $\cN$, define
$$\UT_{\beta}=\UT_{\bdry^{-1}(\beta)}$$
(note that $\bdry^{-1}(\beta)$ makes no sense without $\cN$).

Every parabolic subposet $\cQ$ in $\cN$ with $\beta=\bdry(\cQ)$ has a corresponding Levi subgroup
$$L_{\beta}=\left[\begin{array}{c|c|c|c} 
\GL_{\beta_1} & 0 & \cdots & 0\\ \hline 
0 &  \GL_{\beta_2}  & \ddots & \vdots\\ \hline  
\vdots & \ddots & \ddots& 0\\ \hline 
0 & \cdots & 0 &  \GL_{\beta_\ell}\end{array}\right],$$
such that  $\UT_\beta$ is the unipotent radical of the parabolic subgroup
$$P_\beta=L_{\beta}\ltimes \UT_\beta=N_{\GL_\cN(q)}(\UT_\beta).$$

\begin{remark}
The Lie theoretic language of parabolics, Levis and unipotent radicals is merely given for context.  The reader is welcome to ignore the terminology and focus on the definitions.
\end{remark}

\subsection{Levi compatible posets and $\UT_{(\beta,\cP)}$}

This section defines a family of subgroups of $\UT_\beta$ in a way that gives a ``block" analogue to pattern groups.  In the ensuing sections we will primarily be interested in the subgroups of this nature that are in fact normal in $\UT_\beta$.

A \emph{Levi compatible} subposet $\cP$ of $\cQ\triangleleft_\parb \cN$ is a  subposet such that 
$$L_{\bdry(\cQ)}\subseteq N_{\GL_\cN(q)}(\UT_\cP).$$
In this case, we write $\cP\subseteq_\levi \cQ$.

\begin{proposition} \label{FatMap} If $\cQ$ is parabolic with set composition $(\cQ_1,\cQ_2,\ldots,\cQ_\ell)$ and $\beta=\bdry(\cQ)$, then there is a bijection 
$$\begin{array}{r@{\ }c@{\ }c@{\ }c}\fat_\beta:  &\left\{\begin{array}{@{}c@{}} \text{Subposets of}\\ \text{$1<2<\cdots<\ell$}\end{array}\right\} & \longrightarrow & \left\{\begin{array}{@{}c@{}}\text{Levi compatible}\\ \text{subposets of $\cQ$}\end{array}\right\}\\ 
& \cP & \mapsto & \fat_\beta(\cP).\end{array}$$
where 
$$a\prec_{\fat_\beta(\cP)}b\qquad \text{if and only if}\qquad \text{$a\in \cQ_i$, $b\in \cQ_j$ with $i\prec_{\cP} j$.}$$
\end{proposition}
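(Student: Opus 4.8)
The plan is to exhibit $\fat_\beta$ as a well-defined map into Levi compatible subposets, construct its inverse explicitly, and verify the two compositions are identities. The key technical content is understanding exactly when $L_{\bdry(\cQ)}$ normalizes $\UT_\cP$ for a subposet $\cP \subseteq \cQ$, so I would begin there. Writing $L = L_\beta$ with $\beta = \bdry(\cQ)$ and blocks $\cQ_1, \ldots, \cQ_\ell$, conjugation by a block-diagonal element $g = \diag(g_1, \ldots, g_\ell) \in L$ sends the matrix unit $e_{ab}$ (for $a \in \cQ_i$, $b \in \cQ_j$, necessarily $i < j$ since $e_{ab} \in \fkut_\cQ$) to a linear combination of the $e_{a'b'}$ with $a' \in \cQ_i$, $b' \in \cQ_j$. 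Hence for $\cP \subseteq \cQ$, the space $\fkut_\cP$ is $L$-stable if and only if its set of allowed coordinates $\sInt(\cP) \subseteq \sInt(\cQ)$ is a union of ``blocks'' $\cQ_i \times \cQ_j$: that is, whenever $a \prec_\cP b$ with $a \in \cQ_i$, $b \in \cQ_j$, then $a' \prec_\cP b'$ for every $a' \in \cQ_i$, $b' \in \cQ_j$. (Here I use that $\GL_{\beta_i}$ acts on $\FF_q^{\beta_i}$ with no proper nonzero invariant subspace, so partial block patterns are not $L$-stable; this is the one spot needing a genuine argument rather than bookkeeping, and it is the main obstacle — though a mild one.) Since $\UT_\cP = \Id_N + \fkut_\cP$ and conjugation is an algebra-then-group operation, $L$-stability of $\fkut_\cP$ is equivalent to $L \subseteq N_{\GL_\cN}(\UT_\cP)$.

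Given this characterization, I would define the inverse map. A Levi compatible subposet $\cP$ of $\cQ$ determines, by the block-union property just established, a relation on $\{1, \ldots, \ell\}$: declare $i \mathrel{R} j$ iff $a \prec_\cP b$ for some (equivalently every) $a \in \cQ_i$, $b \in \cQ_j$. Because $\cP \subseteq \cQ$ we have $R \subseteq {<}$, and because $\cP$ is a poset (transitive, and its relations are recorded faithfully by $\sInt(\cP)$ being a dual order ideal of $\sInt(\cQ)$), one checks $R$ is transitive: if $i \mathrel R j \mathrel R k$, pick $a \in \cQ_i$, $b \in \cQ_j$, $c \in \cQ_k$; then $a \prec_\cP b \prec_\cP c$ forces $a \prec_\cP c$, so $i \mathrel R k$. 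Thus $R$ is a subposet of $1 < 2 < \cdots < \ell$, and I set $\fat_\beta^{-1}(\cP) = R$.

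Finally I would verify the two round trips. Starting from a subposet $\cP$ of $1 < \cdots < \ell$, the poset $\fat_\beta(\cP)$ defined by $a \prec b \iff a \in \cQ_i, b \in \cQ_j, i \prec_\cP j$ is genuinely a subposet of $\cQ$ (transitivity of $\prec_\cP$ gives transitivity of $\fat_\beta(\cP)$, and $i \prec_\cP j \Rightarrow i < j \Rightarrow$ the relation is contained in $\prec_\cQ$), and it is Levi compatible because its coordinate set is visibly the union $\bigcup_{i \prec_\cP j} \cQ_i \times \cQ_j$ of full blocks, hence $L$-stable by the characterization above; applying $\fat_\beta^{-1}$ recovers exactly the relation $i \prec_\cP j$. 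Conversely, starting from a Levi compatible $\cP$ and forming $R = \fat_\beta^{-1}(\cP)$, the poset $\fat_\beta(R)$ has relation set $\bigcup_{i \mathrel R j} \cQ_i \times \cQ_j$, which by the block-union property of Levi compatible posets is precisely the relation set of $\cP$. This shows $\fat_\beta$ and $\fat_\beta^{-1}$ are mutually inverse bijections, completing the proof. I would present the block-union characterization as a short preliminary paragraph (or fold it into the proof), since it is the one nontrivial ingredient; everything else is a direct check that relations translate correctly between the two sides.
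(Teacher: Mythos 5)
Your proposal is correct. Note that the paper states Proposition \ref{FatMap} without proof (it is treated as immediate from the definitions), so there is no argument of the author's to compare against; your write-up supplies exactly the missing content, namely the characterization that for $\cP\subseteq\cQ$ the space $\fkut_\cP$ is $L_\beta$-stable if and only if its coordinate set is a union of full blocks $\cQ_i\times\cQ_j$, after which the inverse map and the two round trips are routine. One small precision: the fact you invoke should be phrased as ``the only $L_\beta$-stable \emph{coordinate} subspaces of a block $M_{\beta_i\times\beta_j}(\FF_q)$ are $0$ and the whole block.'' This does follow from irreducibility of the natural module as you suggest, but only after decoupling the two actions (take $g_j=\Id$ to see each column-support space must be $0$ or all of $\FF_q^{\cQ_i}$, take $g_i=\Id$ for rows, then combine); alternatively it follows even more cheaply by acting with the permutation matrices inside $L_\beta$, which permute the matrix units $e_{ab}$ within a block transitively. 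With that sentence made explicit, the proof is complete.
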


For $\fat_\beta(\cP)\subseteq_\levi \bdry^{-1}(\beta)$, let
$$\UT_{(\beta,\cP)}= \UT_{\fat_\beta(\cP)},$$
so that $\UT_\beta=\UT_{(\beta,\cL)}$ where $\cL$ is the usual total order on $\{1,2,\ldots, \ell\}$.

\begin{examples}\hfill

\begin{enumerate}
\item[(E1)]
All subposets are Levi compatible with the total order $\cN$.  This notion therefore gives a generalization of ``pattern subgroups" to arbitrary unipotent radicals of parabolics.
\item[(E2)]  If $\cN$ is $1<2<3<4$, then 
$$
\begin{tikzpicture}[scale=.5,baseline=0cm]
\fill[gray!20!white]  (1.5,0) +(-.9,-0.4) rectangle ++(.9,0.4);
\foreach \x/\y/\z in {0/0/1,1/0/2,2/0/3,3/0/4}
	\node (\z) at (\x,\y) [inner sep = 1pt] {$\z$};
\end{tikzpicture},
\quad
\begin{tikzpicture}[scale=.5,baseline=.2cm]
\fill[gray!20!white] (1.5,0) +(-.9,-0.4) rectangle ++(.9,0.4);
\foreach \x/\y/\z in {0/0/1,1/0/2,2/0/3,1/1/4}
	\node (\z) at (\x,\y) [inner sep = 1pt] {$\z$};
\foreach \y/\z in {1/4,2/4,3/4}
	\draw (\y) -- (\z);
\end{tikzpicture},
\quad
\begin{tikzpicture}[scale=.5,baseline=.2cm]
\fill[gray!20!white](.5,1) +(-.9,-0.4) rectangle ++(.9,0.4);
\foreach \x/\y/\z in {1/0/1,0/1/2,1/1/3,2/1/4}
	\node (\z) at (\x,\y) [inner sep = 1pt] {$\z$};
\foreach \y/\z in {1/2,1/3,1/4}
	\draw (\y) -- (\z);
\end{tikzpicture},
\quad
\begin{tikzpicture}[scale=.5,baseline=.2cm]
\fill[gray!20!white] (1,1) +(-1.4,-0.4) rectangle ++(1.4,0.4);
\foreach \x/\y/\z in {1/0/1,0/1/2,2/1/3,3/0/4}
	\node (\z) at (\x,\y) [inner sep = 1pt] {$\z$};
\foreach \y/\z in {1/2,1/3}
	\draw (\y) -- (\z);
\end{tikzpicture},
\quad
\begin{tikzpicture}[scale=.5,baseline=.2cm]
\fill[gray!20!white] (1.5,0) +(-.9,-0.4) rectangle ++(.9,0.4);
\foreach \x/\y/\z in {0/0/1,1/0/2,2/0/3,0/1/4}
	\node (\z) at (\x,\y) [inner sep = 1pt] {$\z$};
\foreach \y/\z in {1/4}
	\draw (\y) -- (\z);
\end{tikzpicture},
\quad
\begin{tikzpicture}[scale=.5,baseline=.2cm]
\fill[gray!20!white] (2,0) +(-1.4,-0.4) rectangle ++(1.4,0.4);
\foreach \x/\y/\z in {0/0/1,1/0/2,3/0/3,2/1/4}
	\node (\z) at (\x,\y) [inner sep = 1pt] {$\z$};
\foreach \y/\z in {2/4,3/4}
	\draw (\y) -- (\z);
\end{tikzpicture}\subseteq_\levi 
\begin{tikzpicture}[scale=.5,baseline=.2cm]
\foreach \x/\y/\z in {1/0/1,0/1/2,2/1/3,1/2/4}
	\node (\z) at (\x,\y) [inner sep = 1pt] {$\z$};
\foreach \y/\z in {1/2,1/3,2/4,3/4}
	\draw (\y) -- (\z);
\end{tikzpicture}.
$$
\end{enumerate}
where the shaded elements are treated as a single element.
\end{examples}

\begin{remark}The relation $\subseteq_\levi$ gives a partial order on the on the set of parabolic subposets of a fixed $\cN$.  In this poset, $\cN$ is the unique maximal element, and the totally unrelated set is the unique minimal element.
\end{remark}

\subsection{Unipotent polytopes}

Fix an integer composition $\beta=(\beta_1,\ldots,\beta_\ell)\vDash N$ and let $\cP$ be a normal subposet of $1<2<\cdots<\ell$ with corresponding Ferrer's shape $F$.  The \emph{unipotent polytope} $(\beta,\cP)$ is the convex polytope in the positive quadrant $\RR_{\geq 0}^{|F|}$ determined by the inequalities
$$\Big\{\sum_{i\prec_\cP j} x_{ij}\leq \beta_j,\sum_{j\prec_\cP k} x_{jk}\leq \beta_j\ \Big|\ 1\leq j\leq \ell\Big\}.$$

\begin{remark} If $F$ is the Ferrer's shape corresponding to $\cP$, then one may view the unipotent polytope as possible fillings of the boxes of $F$ by non-negative real numbers such that the row and column sums are bounded by $\beta$.
\end{remark}

\begin{examples}\hfill

\begin{enumerate}
\item[(E1)] If $\beta=(2,3,1,1,5)$, and
 $$\cP=
\begin{tikzpicture}[scale=1,baseline=.5cm]
	\foreach \x/\y/\z in {0/0/1,1/0/2,2/0/3,3/0/4,1/1/5}
		{\node (\x\y) at (\x,\y) [inner sep=-1pt] {$\bullet$};
		\node at (\x-.25,\y) {$\scs\z$};}
	\foreach \x in {0,1,2}
		\draw (\x0) -- (11); 
\end{tikzpicture}
\longleftrightarrow 
\begin{tikzpicture}[scale=.3,baseline=-1.6cm]
	\fill[gray!20!white] (9,-3)+(-1,-3) rectangle ++(1,3);
	\foreach \y in {0,...,5}
		{\foreach \x in {\y,...,5}
			\node (\x\y) at (2*\x,-2*\y) [inner sep=-4pt] {$\cdot$};
			}
	\foreach \y in {0,...,3}
		{\foreach \x in {\y,...,3}
			\node at (2*\x+3,-2*\y-1) [inner sep=-4pt] {$\bullet$};
			}
	\foreach \x/\y in {00/10,10/20,20/30,30/40,40/41,41/42,42/43,43/53,53/54,54/55}
		\draw (\x) -- (\y);
	\foreach \x in {1,...,5}
		\node at (2*\x-1,-2*\x+1) {$\scs\x$};
\end{tikzpicture}\longleftrightarrow 
\begin{tikzpicture}[scale=.5,baseline=-1cm]
\foreach \y in {0,...,3}
	{\foreach \x in {\y,...,3}
		\draw[gray] (\x,-\y)+(-.5,-.5) rectangle ++(.5,.5);}
\foreach \x/\y in {3/0,3/1,3/2}
	\draw[thick] (\x,-\y)+(-.5,-.5) rectangle ++(.5,.5);
\end{tikzpicture}\ ,$$
 then the equations $x_{15}\leq 2,
x_{25}\leq  3,
x_{35}\leq  1,
x_{15}+x_{25}+x_{35}\leq 5$ give the polytope
$$\begin{tikzpicture}[scale=.5,baseline=.5cm]
	\fill[gray!20!white] (0,0,1) -- (3,0,1) -- (3,1,1) -- (2,2,1) -- (0,2,1) -- (0,0,1);
	\fill[gray!20!white] (3,0,1) -- (3,0,0) -- (3,2,0) -- (3,1,1) -- (3,0,1);
	\fill[gray!20!white]  (0,2,1) -- (0,2,0) -- (3,2,0) -- (2,2,1) -- (0,2,1);
	\fill[gray!20!white]  (2,2,1) -- (3,2,0) -- (3,1,1) -- (2,2,1);
	\foreach \x in {1,...,5}
		{\node[gray] (\x00) at (\x,0,0) [inner sep=-1pt] {$\scscs\bullet$};
		\node[gray] (0\x0) at (0,\x,0) [inner sep=-1pt] {$\scscs\bullet$};
		\node[gray] (00\x) at (0,0,\x) [inner sep=-1pt] {$\scscs\bullet$};}
	\node[gray] (000) at (0,0,0) [inner sep=-1pt] {$\scscs\bullet$};
	\draw[gray] (5,0,0) -- (0,5,0) -- (0,0,5) -- (5,0,0);
	\draw[->,gray] (0,0,0) -- (6,0,0);
	\draw[->,gray] (0,0,0) -- (0,6,0);
	\draw[->,gray] (0,0,0) -- (0,0,6);
	\draw[gray] (0,0,1) -- (4,0,1) -- (0,4,1) -- (0,0,1);
	\draw[gray] (3,0,0) -- (3,0,2) -- (3,2,0) -- (3,0,0);
	\draw[gray] (0,2,0) -- (3,2,0) -- (0,2,3) -- (0,2,0);
	\draw (0,0,1) -- (3,0,1) -- (3,1,1) -- (2,2,1) -- (0,2,1) -- (0,0,1);
	\draw (3,0,1) -- (3,0,0) -- (3,2,0) -- (3,1,1);
	\draw (0,2,1) -- (0,2,0) -- (3,2,0) -- (2,2,1);
	\node at (.75,6,0) {$\scs x_{15}$};
	\node at (6,-.5,0) {$\scs x_{25}$};
	\node at (0,-.5,6) {$\scs x_{35}$};
\end{tikzpicture}\ .
$$
\item[(E2)] In general, the unipotent polytopes $(\beta,\cP)$ where the corresponding Ferrer's shape $F$ is a rectangle are called transportation polytopes.  They are a subfamily corresponding to abelian unipotent groups.  In this case, the bounds on the row sums and and the bounds on the column sums are independent.
\end{enumerate}
\end{examples}

Unipotent polytopes match up with Levi compatible sub-posets in the following way.

\begin{proposition} Let $\cN$ be a total order on a set with $N$ elements and let $\beta\vDash N$.  Then the following are equivalent:
\begin{enumerate}
\item[(P1)] $(\beta,\cP)$ is a unipotent polytope,
\item[(P2)] $\fat_{\beta}(\cP)\triangleleft_\levi \bdry^{-1}(\beta)$,
\item[(P3)] $P_\beta\subseteq N_{\GL_\cN(q)}(\UT_{(\beta,\cP)}).$
\end{enumerate}
\end{proposition}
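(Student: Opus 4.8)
The plan is to prove the cycle of implications (P1) $\Rightarrow$ (P3) $\Rightarrow$ (P2) $\Rightarrow$ (P1), using the translation dictionary already assembled: Proposition \ref{FatMap} (bijection $\fat_\beta$ between subposets of $1<\cdots<\ell$ and Levi-compatible subposets of $\cQ$) and the definition of normality of posets from Section \ref{SectionNormalPosets}. The central observation is that $P_\beta = L_\beta \ltimes \UT_\beta$, so normalizing $\UT_{(\beta,\cP)}$ under $P_\beta$ decomposes into two independent conditions: invariance under the Levi $L_\beta$, and invariance under conjugation by $\UT_\beta$ itself (i.e.\ normality of $\UT_{(\beta,\cP)}$ in $\UT_\beta$).

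First I would dispatch the Levi half, which holds automatically: by hypothesis $\fat_\beta(\cP)$ is a Levi compatible subposet of $\cQ = \bdry^{-1}(\beta)$, i.e.\ $L_\beta \subseteq N_{\GL_\cN(q)}(\UT_{(\beta,\cP)})$ by the very definition of $\subseteq_\levi$ and the definition of $\UT_{(\beta,\cP)}$. So the content of (P3) is precisely that $\UT_{(\beta,\cP)} \triangleleft \UT_\beta$, and once we know this, $P_\beta = L_\beta \UT_\beta$ normalizes $\UT_{(\beta,\cP)}$ since it is generated by two normalizing subgroups. Thus (P3) is equivalent to the normality statement $\fat_\beta(\cP) \triangleleft \bdry^{-1}(\beta)$, which combined with $\fat_\beta(\cP) \subseteq_\levi \bdry^{-1}(\beta)$ is exactly (P2) written as $\fat_\beta(\cP) \triangleleft_\levi \bdry^{-1}(\beta)$. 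So (P2) $\Leftrightarrow$ (P3) is essentially bookkeeping on definitions plus the standard fact (from Section \ref{SectionNormalPosets}) that normality of $\UT_\cP$ in $\UT_\cQ$ is equivalent to normality of the poset $\cP$ in $\cQ$.

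The heart of the argument is then translating between the poset-normality condition on $\fat_\beta(\cP)$ inside $\cQ = \bdry^{-1}(\beta)$ and the polytope condition (P1). Unwinding normality of $\fat_\beta(\cP)$ in $\cQ$: we need $b \prec_{\fat_\beta(\cP)} c$ to imply $a \prec_{\fat_\beta(\cP)} c$ and $b \prec_{\fat_\beta(\cP)} d$ for all $a \prec_\cQ b$ and $c \prec_\cQ d$. By the defining property of $\fat_\beta$, $b \prec_{\fat_\beta(\cP)} c$ means $b \in \cQ_i$, $c \in \cQ_j$ with $i \prec_\cP j$, while $a \prec_\cQ b$ with $a \in \cQ_{i'}$ forces $i' < i$ in the ordinary order on $\{1,\dots,\ell\}$ (possibly $i'=i$, but then $a,b$ are $\cQ$-incomparable, contradiction, so $i' < i$ strictly). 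Hence poset-normality of $\fat_\beta(\cP)$ in $\cQ$ unwinds to: $i \prec_\cP j$ and $i' < i$ implies $i' \prec_\cP j$, and $i \prec_\cP j$ and $j < j'$ implies $i \prec_\cP j'$ — which is precisely the statement that $\cP$ is a normal subposet of $1 < 2 < \cdots < \ell$ in the sense of Section \ref{SectionNormalPosets}. Finally, a normal subposet $\cP$ of $1<\cdots<\ell$ has a Ferrers shape $F$ (Proposition \ref{NormalPosetInterpretations} applied to $\cN = \{1<\cdots<\ell\}$), and the unipotent polytope $(\beta,\cP)$ is defined to exist exactly for such $\cP$; so (P1), which asserts $(\beta,\cP)$ is a unipotent polytope, is by definition the statement that $\cP$ is a normal subposet of $1<\cdots<\ell$. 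This closes the loop.

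I expect the main obstacle to be the careful handling of the $\fat_\beta$ translation in the normality computation — in particular being precise that $a \prec_\cQ b$ with $a,b$ in blocks $\cQ_{i'}, \cQ_i$ forces $i' < i$ strictly (using that $\cQ$ is parabolic, so elements within a block $\cQ_i$ are pairwise $\cQ$-incomparable), and symmetrically on the right. The rest is definition-chasing and invoking the already-stated normal-poset $\leftrightarrow$ normal-subgroup correspondence; I would also remark that the case $\cQ = \cN$ (so $\beta = (1,\dots,1)$) recovers the classical pattern-group normality criterion as a sanity check.
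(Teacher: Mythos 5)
Your proposal is correct and takes essentially the same approach as the paper: both rest on the decomposition $P_\beta=L_\beta\ltimes\UT_\beta$ (with the Levi part automatic from Proposition \ref{FatMap}), the normal-subposet/normal-pattern-subgroup correspondence of Section \ref{SectionNormalPosets}, and the fact that $\fat_\beta$ transfers normality between $\cP\subseteq\cL$ and $\fat_\beta(\cP)\subseteq\cQ$. The only difference is organizational: you verify explicitly that $\fat_\beta$ preserves and reflects normality (using that blocks $\cQ_i$ are nonempty and pairwise incomparable within a block), a step the paper simply asserts.
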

\begin{proof} Let $\cQ=\bdry^{-1}(\beta)$, and let  $\cL$ be the usual total order on $\{1,\ldots, \ell(\beta)\}$.

 If $(\beta,\cP)$ is a unipotent polytope, then by definition $\cP\triangleleft \cL$.  But then by Proposition \ref{FatMap}, $\fat_\beta(\cP)\subseteq_\levi\fat_\beta(\cL)=\cQ$, and $\fat_\beta$ preserves normality.    

If $\fat_\beta(\cP)\triangleleft_\levi \cQ$, then $\UT_{(\beta,\cP)}\triangleleft \UT_\beta$ and $L_\beta\subseteq N_{\GL_\cN(q)}(\UT_{(\beta,\cP)})$.  Thus, 
$$P_\beta=L_\beta\ltimes \UT_\beta\subseteq N_{\GL_\cN(q)}(\UT_{(\beta,\cP)}).$$

If  $P_\beta\subseteq N_{\GL_\cN(q)}(\UT_{(\beta,\cP)}),$ then $\UT_{(\beta,\cP)}\triangleleft\UT_\beta$, so $\fat_\beta(\cP)\triangleleft  \cQ=\fat_\beta(\cL)$ and since $\fat_\beta$ preserves normality, $\cP\triangleleft \cL$.
\end{proof}

\begin{remark}
For a fixed total order $\cN$ with $N$ elements, the function
$$\begin{array}{c@{\ }c@{\ }c} \left\{\begin{array}{@{}c@{}} \text{unipotent polytopes}\\ \text{$(\beta,\cP)$ with $|\beta|=N$}\end{array} \right\} & \longrightarrow &  \left\{\begin{array}{@{}c@{}} \text{normal sub-}\\ \text{groups of $\UT_\cN$}\end{array} \right\} \\
(\beta,\cP) & \mapsto & \UT_{(\beta,\cP)}\end{array}$$
is not injective, since, for example,
$$\UT_{((1^4),
\begin{tikzpicture}[scale=.5,baseline=.2cm]
	\foreach \x/\y/\z in {0/0/1,1/0/2,0/1/3,1/1/4} 
		\node (\x\y) at (\x,\y) [inner sep=1pt] {$\scscs\z$};
	\draw (00) -- (01);
	\draw (00) -- (11);
	\draw (10) -- (01);
	\draw (10) -- (11);
\end{tikzpicture})}=\left[\begin{array}{cccc} 1 & 0 & \ast & \ast\\
0 & 1 & \ast & \ast\\
0 & 0 & 1 & 0\\
0 & 0 & 0 & 1\end{array}\right]=\UT_{((2,2), \begin{tikzpicture}[scale=.5,baseline=.2cm]
	\foreach \x/\y/\z in {0/0/1,0/1/2} 
		\node (\x\y) at (\x,\y) [inner sep=1pt] {$\scscs\z$};
	\draw (00) -- (01);
\end{tikzpicture})}.
$$
On the other hand, for a fixed $\beta\vDash N$
$$\begin{array}{c@{\ }c@{\ }c} \left\{\begin{array}{@{}c@{}} \text{unipotent poly-}\\ \text{topes $(\beta,\cP)$}\end{array} \right\} & \longrightarrow &  \left\{\begin{array}{@{}c@{}} \text{normal sub-}\\ \text{groups of $\UT_\cN$}\end{array}\right\} \\
(\beta,\cP) & \mapsto & \UT_{(\beta,\cP)}\end{array}$$
is injective.
\end{remark}

\section{Families of parabolic supercharacter theories}\label{SectionSupercharacterTheories}

The data in a unipotent polytope $(\beta,\cP)$ also gives a natural supercharacter theory to a corresponding unipotent group $\UT_{(\beta,\cP)}$.  This section describes this theory, and shows that the supercharacters/superclasses are indexed by the integer lattice points contained in the polytope $(\beta,\cP)$.

\subsection{Supercharacter theory definition}

Let $(\beta,\cP)$ be a unipotent polytope with $\beta\vDash N$. Then
$$\fkut_{(\beta,\cP)}=\UT_{(\beta,\cP)}-\Id_{N}$$
is an $\FF_q$-vector space with basis
$$\{e_{ij}\mid i\prec_{\fat_\beta(\cP)} j\},\qquad \text{where}\qquad (e_{ij})_{kl}=\delta_{(i,j),(k,l)}.$$
The dual space
$$\fkut_{(\beta,\cP)}^*=\Hom_{\FF_q}(\fkut_{(\beta,\cP)},\FF_q)$$
has dual basis
$$\left\{\begin{array}{r@{\ }c@{\ }c@{\ }c}e_{ij}^*:&\fkut_{(\beta,\cP)} &\rightarrow &\FF_q\\ & x & \mapsto & x_{ij}\end{array}\ \bigg| i\prec_{\fat_\beta(\cP)} j\right\}.$$

The group $P_\beta$ acts on both $\fkut_{(\beta,\cP)}$ and $\fkut_{(\beta,\cP)}^*$ by left and right multiplication, where
$$(a\cdot y \cdot b)(x)=y(a^{-1}xb^{-1}), \quad \text{for $x\in \fkut_{(\beta,\cP)}, y\in \fkut_{(\beta,\cP)}^*, a,b\in P_\beta$}.$$
These actions give a natural supercharacter theory for $\UT_{(\beta,\cP)}$.  
\begin{description}
\item[$P_\beta$-superclasses of $\UT_{(\beta,\cP)}$.] The set partition $\{P_\beta x  P_\beta+\Id_{N}\mid x\in \fkut_{(\beta,\cP)}\}$
of $\UT_{(\beta,\cP)}$.
\item[$P_\beta$-supercharacters of $\UT_{(\beta,\cP)}$.]  The characters
\begin{equation} \label{SupercharacterDefinition}
\{\chi_{\beta}^{y}\mid P_\beta y P_\beta\in P_\beta\backslash \fkut_{(\beta,\cP)}^*/P_\beta\}, \quad \text{where}\quad \chi_{\beta}^{y}=\sum_{z\in P_\beta y P_\beta}\vartheta\circ z.\end{equation}
\end{description}

\begin{proposition}
If $(\beta,\cP)$ is a unipotent polytope, then the $P_\beta$-superclasses and the $P_\beta$-super-characters form a supercharacter theory of $\UT_{(\beta,\cP)}$.
\end{proposition}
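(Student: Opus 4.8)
The plan is to verify the four axioms (SC0)--(SC3) for the pair given by the $P_\beta$-superclasses and $P_\beta$-supercharacters. The cleanest route is to recognize that this construction is, up to restriction of the acting group, exactly the Diaconis--Isaacs setup for algebra groups, and that the needed ingredients survive the replacement of $\UT_{(\beta,\cP)}$ by the larger group $P_\beta$. Since $(\beta,\cP)$ is a unipotent polytope, Proposition (P1)$\Leftrightarrow$(P3) gives $P_\beta\subseteq N_{\GL_\cN(q)}(\UT_{(\beta,\cP)})$, so $P_\beta$ genuinely acts on $\fkut_{(\beta,\cP)}$ and on $\fkut_{(\beta,\cP)}^*$ by the stated two-sided multiplication; this normality is what makes the whole construction well-defined, and I would state it first.

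First I would address (SC1): each $P_\beta$-superclass is a union of $\UT_{(\beta,\cP)}$-conjugacy classes. Because $\UT_{(\beta,\cP)} \subseteq P_\beta$, conjugation by $u \in \UT_{(\beta,\cP)}$ sends $\Id_N + x$ to $\Id_N + uxu^{-1}$, which lies in the same two-sided $P_\beta$-orbit coset $\Id_N + P_\beta x P_\beta$; hence conjugacy classes refine superclasses. For (SC2), the function $\chi_\beta^y = \sum_{z \in P_\beta y P_\beta} \vartheta \circ z$ is constant on superclasses: evaluating $\vartheta \circ z$ at $\Id_N + x$ depends only on $z(x)$, and as $x$ ranges over a two-sided $P_\beta$-orbit while $z$ ranges over the (dual) two-sided $P_\beta$-orbit of $y$, the multiset of values $\{z(x)\}$ is invariant — this is the standard orbit-sum argument, using that $(a\cdot y\cdot b)(a x b) = y(x)$. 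One must also check $\chi_\beta^y$ is an honest character; this follows because $P_\beta y P_\beta$ is a union of $\UT_{(\beta,\cP)}$-orbits on $\fkut_{(\beta,\cP)}^*$ and the Diaconis--Isaacs/Andr\'e theory already shows each $\UT_{(\beta,\cP)}$-orbit sum $\sum_{z\in\UT_{(\beta,\cP)} y'} \vartheta\circ z$ is (a positive multiple of) a character of $\UT_{(\beta,\cP)}$, so $\chi_\beta^y$ is a nonnegative integer combination of the $AG$-supercharacters.

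The real content is (SC3) together with (SC0): each irreducible character of $\UT_{(\beta,\cP)}$ is a constituent of exactly one $\chi_\beta^y$, and the number of supercharacters equals the number of superclasses. I would deduce this from the fact that $(\cK_{P_\beta},\cX_{P_\beta})$ is a \emph{coarsening} of the $AG$-supercharacter theory $(\cK_{AG},\cX_{AG})$ of $\UT_{(\beta,\cP)}$: the $P_\beta$-superclasses are unions of $AG$-superclasses (since $\UT_{(\beta,\cP)}\subseteq P_\beta$, so $AG$-orbits refine $P_\beta$-orbits on $\fkut_{(\beta,\cP)}$), and dually each $\chi_\beta^y$ is a sum of $AG$-supercharacters $\chi_{AG}^{y'}$ over the $AG$-orbits $\UT_{(\beta,\cP)} y' \UT_{(\beta,\cP)}$ contained in $P_\beta y P_\beta$, with the partition of $\cX_{AG}$ induced by the same two-sided $P_\beta$-action. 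It is a general lemma (essentially Diaconis--Isaacs, or the "union of superclasses / sum of supercharacters" gluing principle) that if one starts from a supercharacter theory and partitions both its superclasses and its supercharacters compatibly — here, compatibly via the common indexing by two-sided $P_\beta$-orbits and the duality between $\fkut_{(\beta,\cP)}$ and $\fkut_{(\beta,\cP)}^*$ — the resulting coarsening is again a supercharacter theory; in particular (SC0) and (SC3) are inherited. The main obstacle, and the step deserving the most care, is showing the two partitions (of $AG$-superclasses and of $AG$-supercharacters) have the \emph{same number of blocks}, i.e. that the $P_\beta$-action on $\UT_{(\beta,\cP)}$-orbits of $\fkut_{(\beta,\cP)}$ and on $\UT_{(\beta,\cP)}$-orbits of $\fkut_{(\beta,\cP)}^*$ have equally many orbits; this is the analogue of the classical fact that $B_\cN$ acting on $\fkut_\cN$ and on $\fkut_\cN^*$ gives equinumerous orbit sets (both counted by set partitions). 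I would establish it by exhibiting, as in the subsequent sections of the paper, an explicit common indexing set — the $\ZZ_{\geq 0}$-lattice points of the polytope $(\beta,\cP)$ — for both sets of orbits, which simultaneously proves (SC0) and sets up the combinatorial description promised in Section \ref{SectionSupercharacterTheories}.
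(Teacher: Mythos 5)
Your proposal is correct and its core coincides with the paper's argument: both treat (SC1) and (SC2) as routine, and both get character-ness, orthogonality, and (SC3) from the decomposition of $\chi_\beta^y$ as a nonnegative combination of $AG$-supercharacters in which each $AG$-supercharacter occurs in exactly one $P_\beta$-supercharacter. The only divergence is (SC0): the paper simply invokes the general fact that a finite group acting linearly on a finite vector space $V$ has equally many orbits on $V$ and on $V^*$ (as in \cite[Lemma 4.1]{DI}), applied to $P_\beta$ acting on $\fkut_{(\beta,\cP)}$ and $\fkut_{(\beta,\cP)}^*$, whereas you propose to exhibit the common indexing by lattice points of the polytope. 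That route can be made to work, but be careful about circularity: the paper's Theorem \ref{SuperclassIndexing} \emph{assumes} (SC0) and only constructs the bijection on the superclass side, so to use indexing as your proof of (SC0) you must independently carry out the dual computation identifying the $P_\beta$-orbits on $\fkut_{(\beta,\cP)}^*$ with $\cT_\cP^\beta$ (the ``dual analogue'' the paper only asserts later). The counting argument is the lighter and self-contained option; the explicit two-sided indexing buys you the combinatorial parametrization up front but at the cost of essentially doing Section 4.2 twice.
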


\begin{proof}
The following conditions are straight-forward to check:
\begin{description}
\item[(SC0)] The fact that the number of $G$ orbits on a vectors space $V$ is the same as the number of $G$-orbits on $V^*$  shows that there are an equal number of superclasses and supercharacters (see \cite[Lemma 4.1]{DI} for an analogous argument).
\item[(SC1)] The superclasses are unions of conjugacy classes.
\item[(SC2)] The supercharacters are constant on superclasses.
\end{description}
It therefore suffices to show that $P_\beta$-supercharacters are in fact orthogonal characters (rather than just class functions) that collectively have the irreducibles of $\UT_{(\beta,\cP)}$ as constituents.  By definition,
\begin{align}
\chi_{\beta}^y & = \sum_{z\in P_\beta y P_\beta} \vartheta\circ z \notag\\
&=\sum_{\UT_{(\beta,\cP)}z' \UT_{(\beta,\cP)}\in P_\beta (\UT_{(\beta,\cP)}y \UT_{(\beta,\cP)}) P_\beta}\bigg( \sum_{z\in \UT_{(\beta,\cP)}z' \UT_{(\beta,\cP)}}\vartheta\circ z\bigg)\notag\\
&= \sum_{\UT_{(\beta,\cP)}z' \UT_{(\beta,\cP)}\in P_\beta (\UT_{(\beta,\cP)}y \UT_{(\beta,\cP)}) P_\beta} \frac{|\UT_{(\beta,\cP)} z' \UT_{(\beta,\cP)}|}{|\UT_{(\beta,\cP)} z'|}\chi_{AG}^{z'}.\label{AGSupercharacterDecomposition}
\end{align}
Thus, $\chi_{\beta}^y$ is a $\ZZ_{\geq 0}$-linear combination of characters and so is also a character.  Furthermore, since each $AG$-supercharacter appears in a unique $P_\beta$-supercharacter, the  $P_\beta$-supercharacters are orthogonal and collectively have all the irreducibles as constituents.
\end{proof}

\begin{remarks}\hfill

\begin{enumerate}
\item[(R1)]  The unipotent polytope $((1^N),\cN)$ recovers the $B_\cN$-supercharacter theory of $\UT_\cN$ from Section \ref{SectionSupercharacters}  (also studied in \cite{BT}). 
\item[(R2)]  Since $B_\cN=P_{(1^{N})}\subseteq P_\beta$, we can coarsen (\ref{AGSupercharacterDecomposition}) to get
\begin{equation} \label{BSupercharacterDecomposition}
\chi_{\beta}^y = \sum_{P_{(1^{N})}z' P_{(1^{N})}\subseteq  P_\beta y  P_\beta}  \chi_{(1^{N})}^{z'}.
\end{equation}
This formula will be more useful below since the $\chi_{(1^{N})}^{z'}$ have nicer character formulas.
\item[(R3)]  In practice, one often scales the supercharacters by some factor that divides the multiplicities of the irreducible constituents.  In this case, there does not seem to be an obvious choice, so we have omitted the scaling factor.  However,  (\ref{AGSupercharacterDecomposition}) implies that one may divide by
$$\frac{|\UT_{(\beta,\cP)} y \UT_{(\beta,\cP)}|}{|\UT_{(\beta,\cP)} y|}$$
and still have characters.
\item[(R4)] An advantage of our definition of the supercharacters (without any scaling) is that it is easy to construct the corresponding modules.  For $y\in \fkut_{(\beta,\cP)}^*$, define the $\UT_{(\beta,\cP)}$-module $M^y$ by a $\CC$-basis
$$\{\boxed{z}\mid z\in P_\beta y P_\beta\}$$
with an action
$$u \cdot\boxed{z}= \vartheta\circ z(u^{-1}-\Id_{|\beta|}) \boxed{uz} \qquad \text{for $u\in \UT_{(\beta,\cP)}$, $z\in P_\beta y P_\beta$}.$$
\item[(R5)] The paper \cite{MT} observes that when a pattern subgroup $\UT_\cP$ is normal in $\UT_\cN$, then it is in fact a union of $AG$-superclasses (in some sense ``supernormal").  In this sense, the $P_\beta$-supercharacter theory makes $\UT_{(\beta,\cP)}$ a supernormal subgroup of $\UT_\beta$.
\end{enumerate}
\end{remarks}

\subsection{The combinatorics of the indexing sets}

For a unipotent polytope $(\beta,\cP)$ with $F_\cP$ as in Proposition \ref{NormalPosetInterpretations}, let
$$\cT_\cP^\beta= \Big\{\begin{array}{@{}r@{}c@{\ }c @{\ }c@{}}\lambda: & F_\cP &\rightarrow &\ZZ_{\geq 0}\\ & (i,j) & \mapsto & \lambda_{ij}\end{array}\ \Big|\ \sum_{k\atop (j,k)\in F_\cP} \lambda_{jk}, \sum_{i\atop (i,j)\in F_\cP} \lambda_{ij}\leq \beta_j,1\leq j\leq \ell\Big\}$$
be the set of $\ZZ_{\geq 0}$-lattice points contained in or on $(\beta,\cP)$.

\begin{examples} \hfill

\begin{enumerate}
\item[(E1)] The set
$$\cT_{
\begin{tikzpicture}[scale=.3]
\foreach \x/\y/\z in {0/0/1,2/0/2,1/1/3}
	\node (\z) at (\x,\y) [inner sep=1pt] {$\scscs \z$};
\foreach \a/\b in {1/3,2/3}
	\draw (\a) -- (\b);
\end{tikzpicture}
}^{(4,1,2)}=\left\{
\begin{tikzpicture}[scale=.4,baseline=0cm]
	\foreach \x/\y/\z in {-1/1/4,0/0/1,1/-1/2}
		{\fill[gray!20!white] (\x,\y) +(-.5,-.5) rectangle ++(.5,.5);
		\node at (\x,\y) {$\scs\z$};}
	\foreach \x/\y/\num in 
	{	1/1/0,
		1/0/0}
{
  \draw (\x,\y) +(-0.5,-0.5) rectangle ++(0.5,0.5);
  \draw (\x,\y) node {$\scs\num$};
}
\end{tikzpicture},\
\begin{tikzpicture}[scale=.4,baseline=0cm]
	\foreach \x/\y/\z in {-1/1/4,0/0/1,1/-1/2}
		{\fill[gray!20!white] (\x,\y) +(-.5,-.5) rectangle ++(.5,.5);
		\node at (\x,\y) {$\scs\z$};}
	\foreach \x/\y/\num in 
	{	1/1/1,
		1/0/0}
{
  \draw (\x,\y) +(-0.5,-0.5) rectangle ++(0.5,0.5);
  \draw (\x,\y) node {$\scs\num$};
}
\end{tikzpicture},\
\begin{tikzpicture}[scale=.4,baseline=0cm]
	\foreach \x/\y/\z in {-1/1/4,0/0/1,1/-1/2}
		{\fill[gray!20!white] (\x,\y) +(-.5,-.5) rectangle ++(.5,.5);
		\node at (\x,\y) {$\scs\z$};}
	\foreach \x/\y/\num in 
	{	1/1/0,
		1/0/1}
{
  \draw (\x,\y) +(-0.5,-0.5) rectangle ++(0.5,0.5);
  \draw (\x,\y) node {$\scs\num$};
}
\end{tikzpicture},\
\begin{tikzpicture}[scale=.4,baseline=0cm]
	\foreach \x/\y/\z in {-1/1/4,0/0/1,1/-1/2}
		{\fill[gray!20!white] (\x,\y) +(-.5,-.5) rectangle ++(.5,.5);
		\node at (\x,\y) {$\scs\z$};}
	\foreach \x/\y/\num in 
	{	1/1/1,
		1/0/1}
{
  \draw (\x,\y) +(-0.5,-0.5) rectangle ++(0.5,0.5);
  \draw (\x,\y) node {$\scs\num$};
}
\end{tikzpicture},\
\begin{tikzpicture}[scale=.4,baseline=0cm]
	\foreach \x/\y/\z in {-1/1/4,0/0/1,1/-1/2}
		{\fill[gray!20!white] (\x,\y) +(-.5,-.5) rectangle ++(.5,.5);
		\node at (\x,\y) {$\scs\z$};}
	\foreach \x/\y/\num in 
	{	1/1/2,
		1/0/0}
{
  \draw (\x,\y) +(-0.5,-0.5) rectangle ++(0.5,0.5);
  \draw (\x,\y) node {$\scs\num$};
}
\end{tikzpicture},\
\right\}$$
where the entries shaded in gray give the bounds for each row and column.
\item[(E2)] If $\beta=(1^{m},n)$ with $m\leq n$ and 
$$\cP=
\begin{tikzpicture}[scale=.5,baseline=.5cm]
\foreach \x/\y/\z in {0/0/1,1/0/2,4/0/{m},2/2/{m+1}}
	\node (\x) at (\x,\y) [inner sep=1pt] {$\scs \z$};
\foreach \a/\b in {0/2,1/2,4/2}
	\draw (\a) -- (\b);
\node at (2.5,0) {$\cdots$};
\end{tikzpicture}
$$
then $\cT_\cP^{\beta}$ is the set of vertices of the $m$-dimensional hypercube.
\item[(E3)] If $N=2m$, and
$$\cP=
\begin{tikzpicture}[scale=.5,baseline=.5cm]
\foreach \x/\y/\z in {0/0/1,1/0/2,4/0/{m},0/2/{m+1},1/2/{m+2}, 4/2/N}
	\node (\x\y) at (2*\x,\y) [inner sep=1pt] {$\scs \z$};
\foreach \a/\b in {00/02,00/12,00/42,10/02,10/12,10/42,40/02,40/12,40/42}
	\draw (\a) -- (\b);
\node at (5,0) {$\cdots$};
\node at (5,2) {$\cdots$};
\end{tikzpicture}$$
then $\cT_\cP^{(1^N)}$ is the usual basis for the rook monoid.
\item[(E4)] If $\cN$ is a total order of a set $A$, then the set $\cT_\cN^{(1^N)}$ is in bijection with the set of set partitions of $A$.
\end{enumerate}
\end{examples}

\begin{remark} Note that $\fat_{(1^N)}(\cP)=\cP$ and we may in fact identify $\cT_\cP^{(1^N)}$ with subsets of $\fkut_{((1^N),\cP)}$ and $\fkut_{((1^N),\cP)}^*$.  In particular, fix injections
\begin{equation}\label{IndexSpaceInjection}
\begin{array}{ccc} \cT_\cP^{(1^N)} & \longrightarrow & \fkut_{((1^N),\cP)}\\
\mu & \mapsto & \dd e_\mu=\sum_{i\prec_\cP j} \mu_{ij}e_{ij}\end{array} \qquad \text{and}\qquad 
\begin{array}{ccc} \cT_\cP^{(1^N)} & \longrightarrow & \fkut_{((1^N),\cP)}^*\\
\lambda & \mapsto & \dd e_\lambda^*=\sum_{i\prec_\cP j} \lambda_{ij}e_{ij}^*.\end{array}
\end{equation}
\end{remark}

The following theorem establishes the connection between $P_\beta$-supercharacter theories and $\ZZ_{\geq 0}$-lattice points in unipotent polytopes.

\begin{theorem}\label{SuperclassIndexing}
For $(\beta,\cP)$ a unipotent polytope,
$$\left\{\begin{array}{c} \text{$P_\beta$-superclasses}\\ \text{of $\UT_{(\beta,\cP)}$}\end{array}\right\}\longleftrightarrow \cT_\cP^\beta\longleftrightarrow\left\{\begin{array}{c} \text{$P_\beta$-supercharacters}\\ \text{of $\UT_{(\beta,\cP)}$}\end{array}\right\}.$$
\end{theorem}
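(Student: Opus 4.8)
The plan is to realize both the $P_\beta$-superclasses and the $P_\beta$-supercharacters as sets of $P_\beta$-orbits, and then to pin those orbits down by canonical representatives indexed by $\cT_\cP^\beta$. By definition a $P_\beta$-superclass is a set $\Id_N+P_\beta x P_\beta$ with $x\in\fkut_{(\beta,\cP)}$, so the superclasses are in bijection with the $P_\beta\times P_\beta$-orbits on $\fkut_{(\beta,\cP)}$; likewise, by (\ref{SupercharacterDefinition}), the supercharacters are in bijection with the $P_\beta\times P_\beta$-orbits on $\fkut_{(\beta,\cP)}^*$. It therefore suffices to biject each of these two orbit sets with $\cT_\cP^\beta$ (this reproves, a fortiori, that they are equinumerous, which was (SC0)). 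Throughout I use Proposition \ref{FatMap} to view an element of $\fkut_{(\beta,\cP)}=\fkut_{\fat_\beta(\cP)}$ as a block matrix whose $(i,j)$-block has size $\beta_i\times\beta_j$ and is allowed to be nonzero exactly when $(i,j)\in F_\cP$, and similarly for $\fkut_{(\beta,\cP)}^*$ via the basis $\{e_{ij}^*\}$.

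For the superclasses the key step is a block Bruhat / Gauss--Jordan normal form. Writing $P_\beta=L_\beta\ltimes\UT_\beta$, left and right multiplication on $\fkut_{(\beta,\cP)}$ provides exactly the following moves on block matrices: (i) arbitrary invertible row operations within block-row $j$ and column operations within block-column $j$, for each $j$ (from the factor $\GL_{\beta_j}$ of $L_\beta$); and (ii) adding a matrix-multiple of a \emph{later} block-row to an \emph{earlier} one, and a matrix-multiple of an \emph{earlier} block-column to a \emph{later} one (from $\UT_\beta$). Because $P_\beta$ normalizes $\UT_{(\beta,\cP)}$ (equivalence (P1)$\Leftrightarrow$(P3)), these moves preserve the shape $F_\cP$. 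I claim every $x$ reduces, under (i) and (ii), to a unique matrix $e_\lambda$ which in each block $(i,j)\in F_\cP$ is a rank-$\lambda_{ij}$ partial permutation matrix in the canonical echelon position, with $\lambda\in\cT_\cP^\beta$. For existence one runs a Gauss--Jordan-style elimination: pick a nonzero entry that is leftmost in its row and bottommost in its column, make it a pivot and normalize it to $1$ using (i), then clear the rest of that pivot's row and column using (ii) (in a suitable order so that already-cleared pivots are not disturbed), and iterate; the output is a $0$--$1$ matrix with at most one pivot per row and per column. The pivots lying in a fixed block-row $j$ occupy distinct rows among its $\beta_j$ rows and are spread over the blocks $(j,k)\in F_\cP$, giving $\sum_k\lambda_{jk}\le\beta_j$, and symmetrically $\sum_i\lambda_{ij}\le\beta_j$, so $\lambda\in\cT_\cP^\beta$; conversely each $e_\lambda$ with $\lambda\in\cT_\cP^\beta$ is already in this form, so $\lambda\mapsto[e_\lambda]$ is onto the set of orbits. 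For injectivity I would show $\lambda$ is an orbit invariant: for suitable pairs of index sets $S,T\subseteq\{1,\ldots,\ell\}$ the partial sum $\sum_{(i,j)\in(S\times T)\cap F_\cP}\lambda_{ij}$ is the rank of an explicit submatrix of $x$ (or a difference of ranks of two nested submatrices), and such rank statistics are visibly unchanged by (i) and (ii); evaluating enough of them on $e_\lambda$ returns $\lambda$, so distinct $\lambda$ give distinct orbits.

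The supercharacter side is entirely parallel. Realizing $y=\sum_{(i,j)\in F_\cP}\lambda_{ij}e_{ij}^*$ as the block matrix with $(i,j)$-block $\lambda_{ij}$, the dual action $(a\cdot y\cdot b)(x)=y(a^{-1}xb^{-1})$ is again a two-sided block-matrix multiplication action of $P_\beta$ (acting through transposes), still supported on $F_\cP$, so the same elimination gives a unique canonical representative and hence a bijection of the $P_\beta\times P_\beta$-orbits on $\fkut_{(\beta,\cP)}^*$ with $\cT_\cP^\beta$. Alternatively, one can deduce the dual case from the superclass case applied to the reversed total order $\cN^{\mathrm{op}}$: order reversal carries $\fkut_{(\beta,\cP)}^*$ with its $P_\beta$-action to $\fkut_{(\beta^{\mathrm{op}},\cP^{\mathrm{op}})}$ with its $P_{\beta^{\mathrm{op}}}$-action, $(\beta^{\mathrm{op}},\cP^{\mathrm{op}})$ is again a unipotent polytope, and $\cT^{\beta^{\mathrm{op}}}_{\cP^{\mathrm{op}}}$ is in obvious bijection with $\cT_\cP^\beta$.

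The main obstacle is the block elimination of the second paragraph. In the classical set-partition case $\beta=(1^N)$ the pivots are single entries and the reduction is routine, but here a single clearing move rescales or combines \emph{whole} block-rows and block-columns and can perturb blocks handled at earlier stages; so the order in which pivots are processed must be chosen to mesh with the poset $\cP$, and one must argue that a usable pivot always exists inside the shape $F_\cP$. Equally, proving that the output $\lambda$ lies in $\cT_\cP^\beta$ and no larger lattice, and that it is a genuine orbit invariant rather than an artifact of one elimination order, is the crux — the rank-statistic description above is the route I would take to make that rigorous.
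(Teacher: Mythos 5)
Your reduction of the theorem to classifying the two-sided $P_\beta$-orbits on $\fkut_{(\beta,\cP)}$ and on $\fkut_{(\beta,\cP)}^*$ is fine, and your description of the available moves (block-diagonal $L_\beta$ operations plus adding later block-rows to earlier ones and earlier block-columns to later ones) is correct. But the heart of the theorem is exactly the part you defer: that your block Gauss--Jordan terminates in a partial-permutation canonical form independent of all choices, equivalently that the block-rank labeling is a complete orbit invariant. As written, the invariant you propose to use to close this is not correct: individual block ranks, and more generally ranks of submatrices indexed by arbitrary $S\times T$, are \emph{not} preserved by the moves of type (ii) --- adding block-row $k$ to block-row $i$ with $i<k$ can change $\rank\big(\Res_{\cQ_i\cup\cQ_l}(x)\big)$ (take $x$ with zero $(i,l)$-block and nonzero $(k,l)$-block). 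The statistics that are genuinely invariant are the ``corner'' ranks, i.e.\ ranks of the submatrices on block-rows $\{i,i+1,\ldots,\ell\}$ and block-columns $\{1,\ldots,j\}$: left multiplication by $P_\beta$ preserves the row space of such a bottom strip and right multiplication preserves the column space of such a left strip, and $\lambda_{ij}$ is then recovered by inclusion--exclusion of four corner ranks. Until you fix this class of submatrices, prove the invariance, and check that these data are realized by exactly the points of $\cT_\cP^\beta$, the uniqueness half is unproven; the existence half (a pivot order compatible with $F_\cP$ that never disturbs finished blocks) is likewise asserted rather than shown. So there is a genuine gap, and you have located it yourself.

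It is worth seeing how the paper sidesteps the elimination entirely: since $B_\cN\subseteq P_\beta$, every $P_\beta$-superclass is a union of $B_\cN$-superclasses, each of which already has a set-partition representative $e_{\tilde\mu}$ (a global partial permutation matrix) from the $\beta=(1^N)$ theory; the label $\lambda_{ij}=\rank\big(\Res_{\cQ_i\cup\cQ_j}(e_{\tilde\mu})\big)$ is computed only on such representatives, well-definedness then needs only invariance of block ranks under the block-diagonal $L_\beta$-action, and uniqueness is settled by within-block permutation matrices carrying any such representative to the distinguished $e_\mu$ of (\ref{emuChoice}). Borrowing that reduction would let you avoid both the pivot-order issue and the invariant-theoretic bookkeeping. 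Finally, note that the paper handles the supercharacter side simply by equality of the numbers of superclasses and supercharacters, so your parallel dual elimination (or order-reversal) is not needed once the superclass bijection is in hand; if you do pursue it, beware that under the identification via the basis $\{e_{ij}^*\}$ the dual action is a two-sided multiplication by the \emph{transposed} (block lower-triangular) factors followed by truncation to the shape $F_\cP$, so ``entirely parallel'' requires a short additional justification.
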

\begin{proof}  The number of $P_\beta$-supercharacters and $P_\beta$-superclasses is the same, so this proof focuses on the left bijection.

 Let $\cN$ be the underlying total order and let $\cQ=\bdry^{-1}(\beta)$ with corresponding set composition $(\cQ_1,\ldots, \cQ_\ell)$.  
Since $B_\cN=P_{(1^{|\beta|})}\subseteq P_\beta$, every $P_\beta$-superclass is a union of $B_\cN$-superclasses (which are indexed by set partitions, as described in the last remark of Section \ref{SectionSupercharacters}).  That is, for each $u\in \UT_{(\beta,\cP)}$, there exists a subset $\cA_u\subseteq \cT_\cP^{(1^N)}$, such that
$$P_\beta(u-\Id_{N})P_\beta=\bigsqcup_{\tilde\mu\in \cA_u} B_\cN e_{\tilde\mu} B_\cN.$$

If $x$ is a matrix with row/column set $\cS$, then let $\Res_\cR(x)$ denote the submatrix obtained by using only the rows and columns in the subset $\cR\subseteq \cS$.  The remainder of the proof shows that
\begin{equation}\label{SuperclassCombinatoricsFunction}
\begin{array}{r@{\ }c@{\ }c@{\ }c} \mathrm{rk}: & \left\{\begin{array}{@{}c@{}} \text{$P_\beta$-superclasses}\\ \text{of $\UT_{(\beta,\cP)}$}\end{array} \right\} & \longrightarrow & \cT_\cP^\beta\\
& P_\beta(u-\Id_{N})P_\beta &\longmapsto &\begin{array}{r@{}c@{\ }c@{\ }c} \mathrm{rk}_{\cA_u}: & F_\cP & \rightarrow & \ZZ_{\geq 0}\\ &(i,j) & \mapsto & \rank\Big(\Res_{\cQ_i\cup\cQ_j}(e_{\tilde\mu})\Big)\end{array}\end{array}
\end{equation}
gives a well-defined (does not depend on the choice of $\tilde\mu\in \cA_u$), bijective function.

To see well-defined, suppose $\tilde\mu,\tilde\nu\in \cA_u$.  Since $\UT_\beta\subseteq \UT_\cN$, we have that $e_{\tilde\mu}= ae_{\tilde\nu} b$ for some $a,b\in L_\beta=\GL_{\beta_1}\times \GL_{\beta_2}\times \cdots \times \GL_{\beta_\ell}$.  Thus,
\begin{align*}
\rank\Big(\Res_{\cQ_i\cup \cQ_j}(e_{\tilde\mu})\Big)&=\rank\Big(\Res_{\cQ_i\cup \cQ_j}(a e_{\tilde\nu} b)\Big)\\
&=\rank\Big(\Res_{\cQ_i\cup \cQ_i}( a)\Res_{\cQ_i\cup \cQ_j}(e_{\tilde\nu})\Res_{\cQ_j\cup \cQ_j}(b)\Big)\\
&=\rank\Big(\Res_{\cQ_i\cup \cQ_j}(e_{\tilde\nu})\Big).
\end{align*}

To see injectivity, fix $\tilde\nu\in \cT_\cP^{(1^N)}$, and let $\mu\in \cT_\cP^\beta$ be given by
$$\mu_{ij}=\rank\Big(\Res_{\cQ_i\cup \cQ_j}(e_{\tilde\nu})\Big).$$
Define $e_\mu\in \fkut_{(\beta,\cP)}$ by
\begin{equation}\label{emuChoice}
\Res_{\cQ_i\cup \cQ_k}(e_\mu)=\left[\begin{array}{c|c|c|c|c} 
0\cdot \Id_{\beta_i- \sum_{i<j\leq k} \mu_{ij}}& 0&  0 & 0 &  0 \\ \hline
0  & 0 & 0 &   w_{\mu_{ik}} & 0 \\ \hline
0 & 0 & 0   & 0 & 0 \\ \hline
0 & 0 & 0 & 0 & 0 \\ \hline
0 & 0 & 0 & 0 & 0\cdot \Id_{\beta_k-\sum_{i\leq j< k} \mu_{jk}} \end{array}\right].
\end{equation}
where 
$$w_n=\left[\begin{array}{ccc} 0 && 1\\ & \adots &\\ 1 & & 0\end{array}\right]\in \GL_n.$$
Then
 $$\rank\Big(\Res_{\cQ_i\cup \cQ_j}(e_{\mu}) \Big)=\rank\Big(\Res_{\cQ_i\cup \cQ_j}(e_{\tilde\nu})\Big)$$
 for all $i\prec_\cP j$.
 
Since each set $\cQ_i$ of rows and $\cQ_k$ of columns has the same number of ones for $e_\mu$ and $e_{\tilde\nu}$, there exist permutation matrices $l_k,r_k\in \GL_{\cQ_k}(\FF_q)$ such that 
$$e_\mu=\left(\begin{array}{c|c|c} l_1 &  & 0\\ \hline  & \ddots & \\ \hline 0 & & l_\ell\end{array}\right)  e_{\tilde\nu}
\left(\begin{array}{c|c|c} r_1 &  & 0\\ \hline  & \ddots & \\ \hline 0 & & r_\ell\end{array}\right).$$
Thus, $e_{\tilde\mu}$ and $e_\nu$ are in fact in the same superclass, and each superclass has a distinguished element $e_\mu$.
\end{proof}

\begin{remarks} \hfill

\begin{enumerate}
\item[(R1)] The construction (\ref{emuChoice}) gives us a superclass representative $e_\mu\in\fkut_{(\beta,\cP)}$ for each superclass.  If $\tilde\mu\in \cA_u\subseteq \cT_\cP^{(1^N)}$ is the element such that $e_\mu=e_{\tilde\mu}$, then $\tilde\mu$ is the unique element $\cA_u$ minimal first with respect to 
$$\sum_{i\prec_\cN j\prec_\cN  k\prec_\cN l}\tilde\mu_{ik}\tilde\mu_{jl}$$
and then with respect to 
$$\#\{i\prec_\cN j\prec_\cN k\mid \tilde\mu_{ik}=1\}.$$ 
\item[(R2)] By Theorem \ref{SuperclassIndexing}, the set $\cT_\cP^\beta$ also indexes the $P_\beta$-supercharacters of $\UT_\cP$.  Thus, if $y\in \fkut_{(\beta,\cP)}^*$ is in the orbit corresponding to $\lambda\in \cT_\cP^\beta$, then we will write
$$\chi^\lambda_\beta=\chi^{y}_\beta.$$
For the purpose of this paper it will not be necessary to fix a specific representative $e^*_{\lambda}\in \fkut_{(\beta,\cP)}^*$ of the orbit corresponding to $\lambda$.
\end{enumerate}
\end{remarks}

\section{Supercharacter formulas} \label{SectionSupercharacterFormulas}

This section works out character formulas for all the supercharacter theories described above.  Fundamentally, it involves weaving together two basic families of examples:
\begin{enumerate}
\item[(E1)] $B_\cN$-supercharacter theories of $\UT_\cP$ for $\cP\triangleleft \cN$.
\item[(E2)] The case where the unipotent polytope $(\beta,\cP)$ is a line segment (or where $\beta$ has exactly two parts).
\end{enumerate}
We first introduce some combinatorial statistics that will appear throughout the formulas, and then prove a result that shows how to compare $P_\beta$-supercharacter theories between different group (but for the same $\beta$).  Then we show how to compute supercharacter values for examples (E1) and (E2).  The main result then follows fairly quickly.

\subsection{Representation theoretic statistics}

There are a number of statistics that arise naturally in the $P_\beta$-supercharacter theories.  They naturally generalize their set partition analogues in the $P_{(1^N)}$-supercharacter theory of $\UT_\cN$ (see \cite{CPKR} for a more general algebraic framework for these statistics).   

For $\lambda\in \cT_\cP^\beta$ with $\cQ=\bdry^{-1}(\beta)$, there are a number of ways to measure the ``size" of a $\lambda$.  For example,
$$|\lambda| = \sum_{i\prec_\cP j} \lambda_{ij}$$
measures the lattice distance to the origin of the lattice point in the unipotent polytope.  However, geometric interpretations of the other statistics are unknown (at least to me).  Having more to do with the dimension of the corresponding modules, 
$$
\dim_L(\lambda) = \sum_{i\prec_\cP j\prec_\cQ k}\lambda_{ik}\beta_j\qquad \text{and}\qquad
\dim_R(\lambda) = \sum_{i\prec_\cQ j\prec_\cP k}\lambda_{ik}\beta_j
$$
give the \emph{left} and \emph{right dimensions} of $\lambda$ (respectively).  Note that if $\cP=\cQ$, then $\dim_R(\lambda)=\dim_L(\lambda)$.    To account for over-counting, we also require the \emph{crossing number}
$$\crs(\lambda)=\sum_{i\prec_\cQ j\prec_\cP k\prec_\cQ l}\lambda_{ik}\lambda_{jl}$$
of $\lambda$.  Lastly, if $\mu\in \cT_\cP^\beta$, the \emph{nestings} of $\mu$ in $\lambda$ are
$$\nst^\lambda_\mu = \sum_{i\prec_\cP j \prec_{\cP} k\prec_{\cP}l}   \lambda_{il}\mu_{jk}.$$

\begin{example} if 
$\beta=(3,6,3,4,5,1)$,
$$\lambda=
\begin{tikzpicture}[scale=.4,baseline=.5cm]
\foreach \x/\y/\z in {-2/2/3,-1/1/6,0/0/3,1/-1/4,2/-2/5,3/-3/1}
		{\fill[gray!20!white] (\x,\y)+(-.5,-.5) rectangle ++(.5,.5);
		\node at (\x,\y) {$\scs\z$};}
\foreach \x/\y/\num in 
	{0/2/2,1/2/0,2/2/1,3/2/0,
	0/1/0,1/1/0,2/1/1,3/1/1,
		1/0/0,2/0/1,3/0/0}
{
  \draw (\x,\y) +(-0.5,-0.5) rectangle ++(0.5,0.5);
  \draw (\x,\y) node {$\scs\num$};
}
\end{tikzpicture}
\qquad\text{and}\qquad
\mu=
\begin{tikzpicture}[scale=.4,baseline=.5cm]
	\foreach \x/\y/\z in {-2/2/3,-1/1/6,0/0/3,1/-1/4,2/-2/5,3/-3/1}
		{\fill[gray!20!white] (\x,\y)+(-.5,-.5) rectangle ++(.5,.5);
		\node at (\x,\y) {$\scs\z$};}
	\foreach \x/\y/\num in 
	{0/2/0,1/2/1,2/2/0,3/2/1,
	0/1/1,1/1/0,2/1/2,3/1/0,
		1/0/1,2/0/2,3/0/0}
{
  \draw (\x,\y) +(-0.5,-0.5) rectangle ++(0.5,0.5);
  \draw (\x,\y) node {$\scs\num$};
}
\end{tikzpicture}
$$
then 
\begin{align*}
|\lambda| &= 6\cdot 0+ 4\cdot 1 + 1\cdot 2\\
\dim_L(\lambda) &=\begin{array}{c@{\ }c@{\ }c@{\ }c@{\ }c@{\ }c@{\ }c}
1\cdot (3+4) &+& 1\cdot (3+4) &+&1\cdot 4 &+& 1\cdot (3+4+5)\\
\begin{tikzpicture}[scale=.4,baseline=.5cm]
		\fill[gray!30!white] (0,1)+(-.3,-1.5) rectangle ++(.3,1.5);
		\fill[gray!30!white] (1,.5)+(-.3,-2) rectangle ++(.3,2);
		\node at (0,0) {$\scs 3$};
		\node at (1,-1) {$\scs 4$};	
\draw  (2,2) + (-0.4,-0.4) rectangle ++(0.4,0.4);
\fill[gray!50!white] (.5,2) + (-1,-.2) rectangle ++ (1,.2);
\foreach \x/\y/\num in 
	{0/2/2,1/2/0,2/2/1,3/2/0,
	0/1/0,1/1/0,2/1/1,3/1/1,
		1/0/0,2/0/1,3/0/0}
{
  \draw (\x,\y) +(-0.5,-0.5) rectangle ++(0.5,0.5);
  \draw (\x,\y) node {$\scs\num$};
}
\end{tikzpicture} 
& & 
\begin{tikzpicture}[scale=.4,baseline=.5cm]
		\fill[gray!30!white] (0,.5)+(-.3,-1) rectangle ++(.3,1);
		\fill[gray!30!white] (1,0)+(-.3,-1.5) rectangle ++(.3,1.5);
		\node at (0,0) {$\scs 3$};
		\node at (1,-1) {$\scs 4$};	
	\draw  (2,1) + (-0.4,-0.4) rectangle ++(0.4,0.4);
\fill[gray!50!white] (.5,1) + (-1,-.2) rectangle ++ (1,.2);	
\foreach \x/\y/\num in 
	{0/2/2,1/2/0,2/2/1,3/2/0,
	0/1/0,1/1/0,2/1/1,3/1/1,
		1/0/0,2/0/1,3/0/0}
{
  \draw (\x,\y) +(-0.5,-0.5) rectangle ++(0.5,0.5);
  \draw (\x,\y) node {$\scs\num$};
}
\end{tikzpicture}
&& 
\begin{tikzpicture}[scale=.4,baseline=.5cm]
		\fill[gray!30!white] (1,-.5)+(-.3,-1) rectangle ++(.3,1);
		\node at (1,-1) {$\scs 4$};	
\draw  (2,0) + (-0.4,-0.4) rectangle ++(0.4,0.4);
\fill[gray!50!white] (1,0) + (-.5,-.2) rectangle ++ (.5,.2);	
\foreach \x/\y/\num in 
	{0/2/2,1/2/0,2/2/1,3/2/0,
	0/1/0,1/1/0,2/1/1,3/1/1,
		1/0/0,2/0/1,3/0/0}
{
  \draw (\x,\y) +(-0.5,-0.5) rectangle ++(0.5,0.5);
  \draw (\x,\y) node {$\scs\num$};
}
\end{tikzpicture}
&  &
\begin{tikzpicture}[scale=.4,baseline=.5cm]
		\fill[gray!30!white] (0,.5)+(-.3,-1) rectangle ++(.3,1);
		\fill[gray!30!white] (1,0)+(-.3,-1.5) rectangle ++(.3,1.5);
		\fill[gray!30!white] (2,-.5)+(-.3,-2) rectangle ++(.3,2);
		\node at (0,0) {$\scs 3$};
		\node at (1,-1) {$\scs 4$};	
		\node at (2,-2) {$\scs 5$};
	\draw  (3,1) + (-0.4,-0.4) rectangle ++(0.4,0.4);
\fill[gray!50!white] (1,1) + (-1.5,-.2) rectangle ++ (1.5,.2);	
\foreach \x/\y/\num in 
	{0/2/2,1/2/0,2/2/1,3/2/0,
	0/1/0,1/1/0,2/1/1,3/1/1,
		1/0/0,2/0/1,3/0/0}
{
  \draw (\x,\y) +(-0.5,-0.5) rectangle ++(0.5,0.5);
  \draw (\x,\y) node {$\scs\num$};
}
\end{tikzpicture}\end{array}\\
\dim_R(\lambda)&=2\cdot 6 + 1\cdot (6+3)+1\cdot 3+1\cdot 3\\
\nst^\lambda_\mu &=1\cdot 1+ 3\cdot 1\cdot 1 + 2\cdot 1\\
\crs(\lambda)&= \begin{array}{c@{\ }c@{\ }c@{\ }c@{\ }c}
2\cdot 1&+&2\cdot 1&+&1\cdot 1\\
\begin{tikzpicture}[scale=.4,baseline=.5cm]
		\fill[gray!50!white] (0,1)+(-.2,-.5) rectangle ++(.2,.5);
		\fill[gray!50!white] (.5,1) + (-1,-.2) rectangle ++ (1,.2);	
\draw  (0,2) + (-0.4,-0.4) rectangle ++(0.4,0.4);
\draw  (2,1) + (-0.4,-0.4) rectangle ++(0.4,0.4);
\foreach \x/\y/\num in 
	{0/2/2,1/2/0,2/2/1,3/2/0,
	0/1/0,1/1/0,2/1/1,3/1/1,
		1/0/0,2/0/1,3/0/0}
{
  \draw (\x,\y) +(-0.5,-0.5) rectangle ++(0.5,0.5);
  \draw (\x,\y) node {$\scs\num$};
}
\end{tikzpicture} 
& & 
\begin{tikzpicture}[scale=.4,baseline=.5cm]
		\fill[gray!50!white] (0,1)+(-.2,-.5) rectangle ++(.2,.5);
		\fill[gray!50!white] (1,1) + (-1.5,-.2) rectangle ++ (1.5,.2);	
\draw  (0,2) + (-0.4,-0.4) rectangle ++(0.4,0.4);
\draw  (3,1) + (-0.4,-0.4) rectangle ++(0.4,0.4);
\foreach \x/\y/\num in 
	{0/2/2,1/2/0,2/2/1,3/2/0,
	0/1/0,1/1/0,2/1/1,3/1/1,
		1/0/0,2/0/1,3/0/0}
{
  \draw (\x,\y) +(-0.5,-0.5) rectangle ++(0.5,0.5);
  \draw (\x,\y) node {$\scs\num$};
}
\end{tikzpicture} 
&& 
\begin{tikzpicture}[scale=.4,baseline=.5cm]
		\fill[gray!50!white] (2,.5)+(-.2,-1) rectangle ++(.2,1);
		\fill[gray!50!white] (1,1) + (-1.5,-.2) rectangle ++ (1.5,.2);	
\draw  (2,2) + (-0.4,-0.4) rectangle ++(0.4,0.4);
\draw  (3,1) + (-0.4,-0.4) rectangle ++(0.4,0.4);
\foreach \x/\y/\num in 
	{0/2/2,1/2/0,2/2/1,3/2/0,
	0/1/0,1/1/0,2/1/1,3/1/1,
		1/0/0,2/0/1,3/0/0}
{
  \draw (\x,\y) +(-0.5,-0.5) rectangle ++(0.5,0.5);
  \draw (\x,\y) node {$\scs\num$};
}
\end{tikzpicture} 
\end{array}
\end{align*}

\end{example}

The following lemma gives an algebraic foundation for most of these statistics.   If one uses the standard representatives coming from the $B_\cN$-supercharacter theory the proof is relatively straight-forward, and the details are left to the reader.
\begin{lemma}\label{StatsInterpretation}
Let $\lambda\in \cT_\cP^\beta$ with $\tilde\lambda\in \cT_{\cP}^{(1^{N})}$ such that $e^*_{\tilde\lambda}\in \fkut_{(\beta,\cP)}^*$ is in the $\lambda$-orbit.  Then
\begin{equation*}
|\lambda|=\rank(e_{\tilde\lambda}),\quad
q^{\dim_L(\lambda)}=|\UT_\beta e^*_{\tilde\lambda}|, \quad q^{\dim_R(\lambda)}= |e^*_{\tilde\lambda}\UT_\beta|\quad \text{and}\quad
q^{\crs(\lambda)}=|\UT_\beta e^*_{\tilde\lambda}\cap e^*_{\tilde\lambda}\UT_\beta|.
\end{equation*}
\end{lemma}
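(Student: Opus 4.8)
The strategy is to translate each claimed equality into an explicit count of orbits or cosets in the matrix world, using the standard superclass representative $e_{\tilde\lambda}$. For the first equality, $|\lambda| = \rank(e_{\tilde\lambda})$: by the construction \eqref{emuChoice} in the proof of Theorem \ref{SuperclassIndexing}, each superclass has a distinguished representative $e_\mu$ built from anti-diagonal blocks $w_{\mu_{ik}}$, and the $L_\beta$-orbit (together with the rank-on-$\cQ_i\cup\cQ_j$ data) pins down $\mu$; since $e_{\tilde\lambda}$ is $L_\beta$-equivalent to $e_\mu$, rank is preserved, and $\rank(e_\mu) = \sum_{i\prec_\cP j}\mu_{ij} = |\mu| = |\lambda|$ because the anti-diagonal blocks sit in disjoint rows and columns. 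This is the easy case.

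For $q^{\dim_L(\lambda)} = |\UT_\beta\, e^*_{\tilde\lambda}|$ (and symmetrically for $\dim_R$), I would identify the stabilizer of $e^*_{\tilde\lambda}$ under the left $\UT_\beta$-action and compute its index. Write $e^*_{\tilde\lambda} = \sum \tilde\lambda_{ik} e_{ik}^*$; acting on the left by $u = \Id_N + n$ with $n\in\fkut_\beta$ sends $e_{ik}^*$ to a combination involving $e_{jk}^*$ for $j\prec_\cQ i$. The condition that $u$ fixes $e^*_{\tilde\lambda}$ becomes a linear condition on $n$ indexed by pairs $(j,k)$ with $j\prec_\cQ i$ for some $i$ with $\tilde\lambda_{ik}=1$ — and crucially one needs $j\prec_\cP k$ for the coordinate $e_{jk}^*$ to live in $\fkut_{(\beta,\cP)}^*$, i.e. $i\prec_\cP j$ fails to matter but $j\prec_\cQ i \prec_\cP k$ — wait, more precisely the relevant contributions are indexed by $i\prec_\cP k$ with $j$ satisfying $j\prec_\cQ i$ yet still $j\prec_\cP k$; this is exactly where the hypothesis $\cP\triangleleft\cQ$ (normality) is used to guarantee $j\prec_\cP k$. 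Counting the free entries of $n$ modulo the stabilizer gives $\sum_{i\prec_\cP j\prec_\cQ k}\lambda_{ik}\beta_j$ once one sums the block sizes $\beta_j$ over the Levi blocks. Using the standard $B_\cN$-representative (nonattacking rooks) keeps the linear-algebra bookkeeping genuinely linear rather than requiring reduced row echelon form.

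For the last identity $q^{\crs(\lambda)} = |\UT_\beta e^*_{\tilde\lambda} \cap e^*_{\tilde\lambda}\UT_\beta|$, I would first describe the left orbit $\UT_\beta e^*_{\tilde\lambda}$ and the right orbit $e^*_{\tilde\lambda}\UT_\beta$ as affine subspaces of $\fkut_{(\beta,\cP)}^*$ (each is a single coset of a linear subspace, by the remarks scaling $AG$-characters), intersect the two linear parts, and count; the intersection dimension should come out to $\sum_{i\prec_\cQ j\prec_\cP k\prec_\cQ l}\lambda_{ik}\lambda_{jl}$, i.e. exactly the pairs of rooks that are simultaneously reachable by a left move and a right move. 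The main obstacle I anticipate is precisely this last computation: keeping straight which index pairs contribute to the left orbit, which to the right orbit, and which to both, and verifying that the overlap is governed by the crossing pattern $i\prec_\cQ j$, $k\prec_\cQ l$ with $i\prec_\cP k$ and $j\prec_\cP l$ — the asymmetry between $\cP$ and $\cQ$ here is the subtle point, and it is the reason the lemma is stated with the caveat that using the $B_\cN$-representatives makes the proof ``relatively straight-forward.''
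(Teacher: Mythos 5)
Your overall strategy is the one the paper intends (it leaves the details to the reader): with the non-attacking-rook representative $e^*_{\tilde\lambda}$, each one-sided orbit is a coset $e^*_{\tilde\lambda}+W$ of a linear subspace, because $\{a^{-1}\mid a\in \UT_\beta\}=\Id_N+\fkut_\beta$ and $n\mapsto e^*_{\tilde\lambda}(n\,\cdot\,)$ is linear in $n$; the rook structure makes $W$ a full coordinate subspace, so the counts and the intersection count for $\crs$ reduce to counting coordinates, and $|\lambda|=\rank(e_{\tilde\lambda})$ is immediate. However, the central bookkeeping in your sketch is wrong. For $a^{-1}=\Id_N+n$ with $n\in\fkut_\beta$ one gets $(a\cdot e^*_{\tilde\lambda})(x)=e^*_{\tilde\lambda}(x)+\sum_{i,k}\tilde\lambda_{ik}\,n_{ij}x_{jk}$, so the left action perturbs the coordinate $e^*_{jk}$ exactly when there is a rook $(i,k)$ in the \emph{same column} with $i\prec_\cQ j$ (rows strictly later in block order), not ``$e^*_{jk}$ for $j\prec_\cQ i$'' as you wrote; and the constraint that $(j,k)$ be a coordinate of $\fkut_{(\beta,\cP)}^*$ is automatic, whereas normality of $\cP$ is what makes the action well defined in the first place ($\fkut_\beta\fkut_{(\beta,\cP)}\subseteq\fkut_{(\beta,\cP)}$), so your appeal to normality is misplaced. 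Your proposed index set ``$j\prec_\cQ i$ with $j\prec_\cP k$'' gives genuinely wrong counts: for $N=3$, $\beta=(1,1,1)$, $\cP$ the normal poset with $1\prec_\cP 3$, $2\prec_\cP 3$ only, and the single rook at $(2,3)$, both one-sided orbits of $e^*_{23}$ are trivial, but your rule predicts size $q$.

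Carried out correctly, the left orbit has size $q^{\sum_{i\prec_\cQ j\prec_\cP k}\lambda_{ik}\beta_j}$ and the right orbit $q^{\sum_{i\prec_\cP j\prec_\cQ k}\lambda_{ik}\beta_j}$; note that with the paper's convention $(a\cdot y)(x)=y(a^{-1}x)$ these attach to the \emph{opposite} labels from the lemma's literal statement (the left orbit is $q^{\dim_R(\lambda)}$), a harmless relabeling since only $\dim_L(\lambda)+\dim_R(\lambda)$ and $\crs(\lambda)$ enter later results — but you should not bend the computation to match the printed label, as your sketch does. Finally, for the crossing identity you still need to observe that both orbits pass through $e^*_{\tilde\lambda}$ and that each linear part is the span of the coordinates $e^*_{jk}$ in an explicit set ($S_L$: positions below a rook in its column, in block order; $S_R$: positions left of a rook in its row), which uses surjectivity of $n\mapsto(\tilde\lambda_{i_kj}n_{i_kj})$ onto that coordinate subspace (here the non-attacking condition is used); then a coordinate lies in $S_L\cap S_R$ precisely when a pair of rooks $(i,k)$, $(j,l)$ satisfies $i\prec_\cQ j\prec_\cP k\prec_\cQ l$, giving $q^{\crs(\lambda)}$. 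As written, your proposal asserts these conclusions but the derivation sketched would not produce them.
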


\subsection{Comparing $P_\beta$-supercharacter theories}

Fix a unipotent polytope $(\beta,\cP)$ and let $\cQ=\bdry^{-1}(\beta)$.   Then we have an injective function $\Ext_{\cP}^\cQ:  \cT_\cP^\beta   \longrightarrow  \cT_\cQ^\beta$ given by
$$\Ext_{\cP}^\cQ(\lambda)_{ij}=\left\{\begin{array}{@{}ll@{}} \lambda_{ij} & \text{if $i\prec_{\fat_\beta(\cP)}j$,}\\ 0 & \text{otherwise.}\end{array}\right.$$ 
This gives a way to compare supercharacter values between the two theories. The following proposition shows that the representation theory mirrors the combinatorics as well as can be expected.

\begin{proposition}\label{RestrictingSupercharacters} Let $(\beta,\cP)$ be a unipotent polytope with $\cQ=\bdry^{-1}(\beta)$.  
For $\lambda\in \cT_\cP^\beta$,
$$\frac{\chi^\lambda_{\beta}}{\chi_\beta^\lambda(1)}=\frac{\Res^{\UT_\beta}_{\UT_{(\beta,\cP)}}(\chi_\beta^{\Ext_{\cP}^\cQ(\lambda)})}{\chi_\beta^{\Ext_{\cP}^\cQ(\lambda)}(1)}.$$
\end{proposition}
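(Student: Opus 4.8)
The plan is to compare the two characters through their expansions in $B_\cN$-supercharacters via formula (\ref{BSupercharacterDecomposition}). Write $G=\UT_\beta$ and $H=\UT_{(\beta,\cP)}$, and set $\tilde\lambda\in\cT_\cP^{(1^N)}$ to be a representative of the $\lambda$-orbit as in Lemma \ref{StatsInterpretation}, so that $e^*_{\tilde\lambda}\in\fkut_{(\beta,\cP)}^*$ lies in the $\lambda$-orbit for the $P_\beta$-action on $\fkut_{(\beta,\cP)}^*$, and also (via the injection $\fkut_{(\beta,\cP)}^*\hookrightarrow\fkut_\beta^*$ dual to the projection $\fkut_\beta\twoheadrightarrow\fkut_{(\beta,\cP)}$, which kills the coordinates $e_{ij}$ with $i\prec_{\fat_\beta(\cQ)}j$ but $i\not\prec_{\fat_\beta(\cP)}j$) represents $\Ext_\cP^\cQ(\lambda)$ for the $P_\beta$-action on $\fkut_\beta^*$. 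First I would verify this compatibility of representatives: the point is that the $P_\beta$-orbit of $e^*_{\tilde\lambda}$ in $\fkut_\beta^*$, when restricted to $\fkut_{(\beta,\cP)}$, maps onto the $P_\beta$-orbit of $e^*_{\tilde\lambda}$ in $\fkut_{(\beta,\cP)}^*$, which is exactly the content of the combinatorial construction (\ref{emuChoice}) and the proof of Theorem \ref{SuperclassIndexing}.

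Next I would compute the restriction $\Res^G_H\bigl(\vartheta\circ z\bigr)$ for $z\in P_\beta\,e^*_{\tilde\lambda}\,P_\beta\subseteq\fkut_\beta^*$: restricting the linear functional $z$ to the subspace $\fkut_{(\beta,\cP)}\subseteq\fkut_\beta$ gives an element $\bar z\in\fkut_{(\beta,\cP)}^*$, and $\Res^G_H(\vartheta\circ z)=\vartheta\circ\bar z$ as a function on $H$. Summing over the $P_\beta$-orbit of $e^*_{\tilde\lambda}$ in $\fkut_\beta^*$, the restriction map $z\mapsto\bar z$ carries this orbit onto the $P_\beta$-orbit of $e^*_{\tilde\lambda}$ in $\fkut_{(\beta,\cP)}^*$ (by the previous paragraph), and it does so as an exactly $m$-to-one map for some constant multiplicity $m$ — constancy of the fibre size follows because $P_\beta$ acts transitively on each orbit and the restriction map is $P_\beta$-equivariant, so the fibres are single $\mathrm{Stab}$-cosets of uniform size. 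Hence
$$
\Res^G_H\bigl(\chi_\beta^{\Ext_\cP^\cQ(\lambda)}\bigr)
=\sum_{z\in P_\beta e^*_{\tilde\lambda}P_\beta}\vartheta\circ\bar z
= m\sum_{w\in P_\beta e^*_{\tilde\lambda}P_\beta\subseteq\fkut_{(\beta,\cP)}^*}\vartheta\circ w
= m\,\chi_\beta^\lambda .
$$
Evaluating at $1$ (where $\vartheta\circ z(0)=1$) gives $\chi_\beta^{\Ext_\cP^\cQ(\lambda)}(1)=m\,\chi_\beta^\lambda(1)$, and dividing the two displayed identities yields the claimed equality of normalized characters; the unknown constant $m$ cancels, which is exactly why the statement is phrased with the degree normalization.

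The main obstacle is the orbit bookkeeping in the middle step: one must be certain that (i) the restriction map $\fkut_\beta^*\to\fkut_{(\beta,\cP)}^*$ sends the $P_\beta$-orbit of $\Ext_\cP^\cQ(\lambda)$ precisely \emph{onto} the $P_\beta$-orbit of $\lambda$ (surjectivity is clear from equivariance plus the fact that $e^*_{\tilde\lambda}$ restricts to $e^*_{\tilde\lambda}$; the subtlety is that nothing outside the target orbit is hit, which uses that the extra coordinates being zeroed are exactly those indexed by $\fat_\beta(\cQ)\setminus\fat_\beta(\cP)$, together with normality of $\fat_\beta(\cP)$ in $\cQ$ so that the $P_\beta$-action cannot move mass into those coordinates), and (ii) the fibres all have the same cardinality $m$. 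Both reduce to equivariance of a surjection of transitive $P_\beta$-sets, so the argument is clean once set up, but the indexing of which matrix coordinates survive restriction is where care is needed. A secondary point worth a sentence is that $\Res^G_H(\vartheta\circ z)$ really is a \emph{character} of $H$ and not merely a class function — but this is immediate since $\chi_\beta^{\Ext_\cP^\cQ(\lambda)}$ is a character of $G$ by the Proposition preceding Remarks (R1)–(R5), and restriction of a character is a character.
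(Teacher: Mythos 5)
Your argument is correct and is essentially the paper's own proof: both rest on the fact that, since $\fkut_{(\beta,\cP)}$ is stable under two-sided multiplication by $P_\beta$, restriction of functionals $\fkut_\cQ^*\to\fkut_{(\beta,\cP)}^*$ is equivariant, hence carries the orbit of $\Ext_{\cP}^{\cQ}(\lambda)$ onto the orbit of $\lambda$ with constant fibre size, so the degree-normalized orbit sums agree on $\UT_{(\beta,\cP)}$ (the paper compresses your explicit $m$-to-one count into one displayed line by dividing each orbit sum by its orbit size). One small caution: equivariance alone already gives that the image is exactly the target orbit, and you should not lean on the idea that the $P_\beta$-orbit of $\Ext_{\cP}^{\cQ}(\lambda)$ stays supported on the $\fat_\beta(\cP)$-coordinates --- it generally does not (e.g.\ for $\beta=(1,1,1)$ and $\cP$ with only $1\prec_\cP 3$, the orbit of $e_{13}^*$ contains $e_{13}^*+s\,e_{12}^*$); the correct invariance statement, which is what equivariance needs, is that $a^{-1}xb^{-1}$ stays in $\fkut_{(\beta,\cP)}$ for $x\in\fkut_{(\beta,\cP)}$.
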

\begin{proof} 
Fix $e_{\tilde\lambda}^*\in \fkut_{(\beta,\cP)}^*$  in the orbit corresponding to $\lambda$.  Then since $\fkut_{(\beta,\cP)}^*$ is invariant under left and right multiplication by $P_\beta$, we also have  $e_{\tilde\lambda}^*\in \fkut_\cQ^*$ is in the orbit corresponding to $\Ext_{\cP}^\cQ(\lambda)$.   Let  $u-\Id_{|\beta|}\in \fkut_{(\beta,\cP)}\subseteq \fkut_\cQ$.  Then by definition (\ref{SupercharacterDefinition}) and the invariance of $\fkut_{(\beta,\cP)}^*$ under $P_\beta$,
\begin{equation*}
\frac{\Res^{\UT_\cQ}_{\UT_{(\beta,\cP)}}(\chi_\beta^{\Ext_{\cP}^\cQ(\lambda)})(u)}{\chi_\beta^{\Ext_{\cP}^\cQ(\lambda)}(1)}=\frac{1}{|P_\beta e_{\tilde\lambda}^* P_\beta|}\sum_{e_{\tilde\nu}^*\in P_\beta e_{\tilde\lambda}^* P_\beta} \vartheta\circ e_{\tilde\nu}^*(u-\Id_{|\beta|})
=\frac{\chi_\beta^\lambda(u)}{\chi_\beta^\lambda(1)},
\end{equation*}
as desired.
\end{proof}

\subsection{Example: $B_\cN$-supercharacter theories}

In the case that $\cQ=\cN$, then all normal pattern subgroups are also normal Levi compatible subgroups.  Here the $B_\cN$-supercharacter formula for $\UT_{(\beta,\cP)}$ are obtained by restricting the $B_\cN$-supercharacter formulas for $\UT_\cN$.  

Up to scaling, the following character formula appears in \cite{BT}, but does not have an explicit published proof.  For $\lambda,\mu\in\cT_\cN^{(1^N)}$, let $\lambda\cap \mu \in \cT_\cN^{(1^N)}$ be given by
$$(\lambda\cap \mu)_{ij}=\lambda_{ij}\mu_{ij}.$$
\begin{proposition}  \label{BasicBFormula} For $\lambda,\mu\in \cT_{\cN}^{(1^{N})}$, 
$$\chi_{(1^N)}^\lambda(u_\mu)=\left\{\begin{array}{@{}ll@{}} 
\frac{q^{\dim_L(\lambda)+\dim_R(\lambda)}(q-1)^{|\lambda|}}{q^{\crs(\lambda)}}\frac{1}{q^{\nst_\mu^\lambda}(1-q)^{|\lambda\cap \mu|}}, & \text{if $\lambda_{ik}\mu_{ij}=\lambda_{ik}\mu_{jk}=0$ for $i\prec_\cN j\prec_{\cN} k$},\\ 0 & \text{otherwise.}
\end{array}\right.$$
\end{proposition}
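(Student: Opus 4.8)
The plan is to compute $\chi_{(1^N)}^\lambda(u_\mu)$ directly from the definition $\chi_{(1^N)}^\lambda=\sum_{z\in B_\cN e_{\tilde\lambda}^* B_\cN}\vartheta\circ z$, by parametrizing the orbit $B_\cN e_{\tilde\lambda}^* B_\cN$ concretely. First I would fix a standard representative $e_{\tilde\lambda}^*$ as a sum of dual basis vectors $e_{ik}^*$ over the arcs $(i,k)$ of the set partition $\lambda$, and similarly identify $u_\mu - \Id_N$ with $e_{\tilde\mu}$. Evaluating $\chi_{(1^N)}^\lambda(u_\mu)=\sum_{z}\vartheta(z(e_{\tilde\mu}))$ requires understanding which functionals $z$ lie in the two-sided orbit and what value each takes on $e_{\tilde\mu}$. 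The key structural fact is that left multiplication by $B_\cN$ on $e_{ik}^*$ sweeps out the span of $\{e_{jk}^*\mid i\preceq_\cN j\}$ (modulo the support constraints imposed by $\lambda$'s other arcs), and right multiplication sweeps out $\{e_{il}^*\mid k\preceq_\cN l\}$; the interaction of these two sweeps is exactly what the statistics $\dim_L$, $\dim_R$, and $\crs$ are set up to count, via Lemma \ref{StatsInterpretation}.

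The computation then proceeds arc by arc. For a single arc $(i,k)$ of $\lambda$, the orbit contributes a factor that is a sum over the free coordinates; the coefficient of $e_{ik}^*$ itself ranges over $\FF_q^\times$ (giving the $(q-1)^{|\lambda|}$ and the Gauss-sum-type factor), while the coordinates $e_{jk}^*$ for $i\prec_\cN j\prec_\cN k$ and $e_{il}^*$ for $i\prec_\cN l$ (appropriately, $k\prec_\cN l$) range freely over $\FF_q$ — these are what produce $q^{\dim_L(\lambda)}$ and $q^{\dim_R(\lambda)}$, with $q^{\crs(\lambda)}$ in the denominator correcting for the coordinates double-counted in the overlap $\UT_\beta e_{\tilde\lambda}^*\cap e_{\tilde\lambda}^*\UT_\beta$. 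Now one evaluates $\vartheta\circ z(e_{\tilde\mu})$: the sum over a free coordinate $e_{jk}^*$ contributes $\sum_{c\in\FF_q}\vartheta(c\,\mu_{jk})$, which is $q$ if $\mu_{jk}=0$ and $0$ if $\mu_{jk}=1$; this is precisely the source of the vanishing condition ``$\lambda_{ik}\mu_{ij}=\lambda_{ik}\mu_{jk}=0$ for $i\prec_\cN j\prec_\cN k$,'' since a nonzero $\mu$-entry strictly inside the span of an arc of $\lambda$ kills the whole character. When no such collision occurs, the coordinate $e_{ik}^*$ itself ranges over $\FF_q^\times$ and contributes $\sum_{c\in\FF_q^\times}\vartheta(c\,\mu_{ik})$, which equals $q-1$ if $\mu_{ik}=0$ and $-1$ if $\mu_{ik}=1$ (i.e.\ if $(i,k)$ is an arc of $\lambda\cap\mu$) — this yields the factor $(q-1)^{|\lambda|}/(1-q)^{|\lambda\cap\mu|}$ after separating the arcs in $\lambda\cap\mu$ from the rest. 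Finally, the arcs of $\lambda$ that are ``nested'' below an arc $(i,l)$ of $\mu$ with $i\prec_\cP j\prec_\cP k\prec_\cP l$ force certain of the free coordinates of the $(i,l)$-sweep to be pinned rather than free, reducing the $q$-power by one each time and producing the $1/q^{\nst_\mu^\lambda}$ factor.

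The main obstacle I anticipate is bookkeeping the interaction between distinct arcs of $\lambda$: a priori the two-sided orbit of $e_{\tilde\lambda}^*$ is not simply a product of the orbits of the individual $e_{ik}^*$, because left/right translates of one arc can collide with the support of another, and one must check that the nonattacking-rook condition on $\lambda$ (no two arcs share a row or column) makes the relevant linear algebra decouple into the claimed product form. I would handle this by an induction on the number of arcs, peeling off the arc whose left endpoint is $\cN$-minimal, and verifying that the stabilizer computations factor; Lemma \ref{StatsInterpretation} can be invoked to identify the orbit sizes $q^{\dim_L}$, $q^{\dim_R}$, $q^{\crs}$ without re-deriving them, which cuts down the work considerably. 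A secondary subtlety is tracking the distinction between $\prec_\cN$ and $\prec_\cP$ in the nesting count when $\cP\subsetneq\cN$; since here $\cQ=\cN$ the statistics $\dim_L$ and $\dim_R$ are taken with respect to $\cN$, so I would phrase everything in the $\cN$-order and only at the end observe that $\nst_\mu^\lambda$ as defined (with $\prec_\cP$) agrees with what the computation produces because arcs of $\lambda$ and $\mu$ live in $F_\cP$.
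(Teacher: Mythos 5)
A point of reference first: the paper gives no proof of Proposition \ref{BasicBFormula} at all --- it is stated as a formula that appears (up to scaling) in \cite{BT} ``without an explicit published proof'' --- so there is nothing internal to compare against, and your direct evaluation of $\sum_{z\in B_\cN e^*_{\tilde\lambda}B_\cN}\vartheta\circ z$ at $u_\mu$ is exactly the kind of argument being implicitly invoked. In outline it is right: free coordinates contribute $q$ or $0$ (the vanishing condition), the $\FF_q^\times$-coefficients at the arcs contribute $q-1$ or $-1$ (the $(q-1)^{|\lambda|}/(1-q)^{|\lambda\cap\mu|}$ factor), and each nested pair costs a factor of $q$.

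However, two of your structural claims are wrong as stated, and the middle of the argument does not cohere because of them. Under the dual action $(a\cdot y\cdot b)(x)=y(a^{-1}xb^{-1})$, upper-triangularity pushes the row index of $e^*_{ik}$ \emph{down} and the column index \emph{left}: the two-sided orbit of $e^*_{ik}$ is supported on positions $(j,l)$ with $i\preceq_\cN j$ and $l\preceq_\cN k$, so the right sweep of an arc $(i,k)$ of $\lambda$ fills the coordinates $e^*_{il}$ with $i\prec_\cN l\prec_\cN k$, not $k\prec_\cN l$ as you wrote. This is not cosmetic: with your direction the free-coordinate count is not $q^{\dim_R(\lambda)}$, and the row half of the vanishing condition would come out as ``$\mu_{il}=0$ for $l\succ_\cN k$'' rather than the stated ``$\lambda_{ik}\mu_{ij}=0$ for $i\prec_\cN j\prec_\cN k$''; you assert the correct condition, but your parametrization would not produce it. Second, the nesting mechanism has $\lambda$ and $\mu$ reversed: $\nst^\lambda_\mu=\sum_{i\prec j\prec k\prec l}\lambda_{il}\mu_{jk}$ counts arcs of $\mu$ strictly nested inside arcs of $\lambda$, and the factor $q^{-\nst^\lambda_\mu}$ arises because for such a pair the orbit element's coordinate at position $(j,k)$ is the product $(a^{-1})_{ij}(b^{-1})_{kl}$ of two free parameters, and $\sum_{x,y\in\FF_q}\vartheta(xy)=q$ rather than $q^2$ --- not because arcs of $\lambda$ sit inside arcs of $\mu$. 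Finally, the arc-interaction worry you raise can be dispatched without induction: since the fibers of $(a,b)\mapsto a\cdot e^*_{\tilde\lambda}\cdot b$ have constant size, one has $\chi^\lambda_{(1^N)}(u_\mu)=\frac{|B_\cN e^*_{\tilde\lambda}B_\cN|}{|B_\cN|^2}\sum_{a,b\in B_\cN}\vartheta\bigl(e^*_{\tilde\lambda}(a^{-1}e_{\tilde\mu}b^{-1})\bigr)$, and the argument of $\vartheta$ is a sum of monomials in independent entries of $a^{-1}$ and $b^{-1}$ (the non-attacking rook condition guarantees the diagonal coefficients at distinct arcs are independent), so the sum factors entry by entry and the prefactor is supplied by Lemma \ref{StatsInterpretation}. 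With those corrections your plan does yield the stated formula.
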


From Proposition \ref{RestrictingSupercharacters} we get the following corollary.

\begin{corollary}\label{BSupercharacterFormula}
 For $\cP\triangleleft\cN$ and $\lambda,\mu \in \cT_\cP^{(1^{N})}$,
$$\chi_{(1^N)}^\lambda(u_\mu)=\left\{\begin{array}{@{}ll@{}} 
\frac{q^{\dim_L(\lambda)+\dim_R(\lambda)}(q-1)^{|\lambda|}}{q^{\crs(\lambda)}}\frac{1}{q^{\nst_\mu^\lambda}(1-q)^{|\lambda\cap \mu|}}, & \text{if $\lambda_{ik}\mu_{ij} = \lambda_{ik}\mu_{jk}=0$ for $i\prec_\cN j\prec_{\cN} k$},\\ 0 & \text{otherwise.}
\end{array}\right.$$
\end{corollary}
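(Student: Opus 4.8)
The plan is to deduce Corollary~\ref{BSupercharacterFormula} directly from Proposition~\ref{BasicBFormula} together with Proposition~\ref{RestrictingSupercharacters} applied in the case $\cQ = \cN$. The point is that when $\cP \triangleleft \cN$, the poset $\cN$ is itself parabolic (it equals $\bdry^{-1}((1^N))$), so $(\,(1^N),\cP\,)$ is a unipotent polytope and Proposition~\ref{RestrictingSupercharacters} identifies the normalized supercharacter $\chi^\lambda_{(1^N)}$ of $\UT_{((1^N),\cP)}$ with the restriction to $\UT_{((1^N),\cP)}$ of the normalized supercharacter $\chi_{(1^N)}^{\Ext_\cP^\cN(\lambda)}$ of $\UT_\cN$. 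So the first step is to observe that, since $\beta=(1^N)$ here, $\Ext_\cP^\cN(\lambda)$ is simply $\lambda$ viewed inside $\cT_\cN^{(1^N)}$ (extending by zeros on all pairs $(i,j)$ with $i\prec_\cN j$ but $i\not\prec_\cP j$), and likewise $u_\mu\in\UT_{((1^N),\cP)}\subseteq\UT_\cN$ for $\mu\in\cT_\cP^{(1^N)}\subseteq\cT_\cN^{(1^N)}$.

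The second step is to apply Proposition~\ref{RestrictingSupercharacters} to get
\begin{equation*}
\frac{\chi^\lambda_{(1^N)}(u_\mu)}{\chi^\lambda_{(1^N)}(1)}=\frac{\chi_{(1^N)}^{\Ext_\cP^\cN(\lambda)}(u_\mu)}{\chi_{(1^N)}^{\Ext_\cP^\cN(\lambda)}(1)},
\end{equation*}
and then substitute the explicit formula from Proposition~\ref{BasicBFormula} for the right-hand side, evaluated at the $\UT_\cN$-combinatorial data $\Ext_\cP^\cN(\lambda)$ and $\mu$. The value $\chi^\lambda_{(1^N)}(1)$ is recovered by setting $u_\mu=1$ (i.e.\ $\mu$ the empty/zero filling), which in Proposition~\ref{BasicBFormula} gives $q^{\dim_L(\lambda)+\dim_R(\lambda)}(q-1)^{|\lambda|}/q^{\crs(\lambda)}$; the same normalizing factor appears in both numerator and denominator, so it cancels and one is left exactly with the claimed expression.

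The remaining step, and the only genuinely substantive one, is to check that the statistics are unchanged under the passage from $\cP$ to $\cN$ via $\Ext_\cP^\cN$. Concretely: for $\lambda\in\cT_\cP^{(1^N)}$ one must verify that $\dim_L$, $\dim_R$, $\crs$, $|\cdot|$ computed with respect to the poset $\cN$ agree with those computed with respect to $\cP$, and that $\nst^\lambda_\mu$ and $|\lambda\cap\mu|$ are likewise unaffected, and finally that the vanishing condition ``$\lambda_{ik}\mu_{ij}=\lambda_{ik}\mu_{jk}=0$ for $i\prec_\cN j\prec_\cN k$'' matches the corresponding condition. For the size-type statistics this is essentially immediate: $\Ext_\cP^\cN(\lambda)$ is supported only on pairs $(i,j)$ with $i\prec_\cP j$, so any product of two $\lambda$-entries in the sums defining $\crs$, $\dim_L$, $\dim_R$, $\nst$ forces the relevant pairs to lie in $\sInt(\cP)$; one then uses normality of $\cP$ in $\cN$ to see that the chain conditions $i\prec_\cN j\prec_\cN k$ appearing in those sums can be replaced by $i\prec_\cP j\prec_\cP k$ without changing which terms are nonzero (here $\beta=(1^N)$ so $\cP=\cQ$ and $\dim_L=\dim_R$). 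The main obstacle is bookkeeping: making sure that the statistics as defined in Section~4.1 over the poset $\cP$ really do coincide term-by-term with the $\cN$-versions restricted to the support of $\lambda$, and that the ``otherwise $0$'' clause transfers correctly. I expect this to be routine once normality of $\cP$ in $\cN$ is invoked, which is why the details can reasonably be left to the reader; the only real content is Proposition~\ref{RestrictingSupercharacters} plus the unproved Proposition~\ref{BasicBFormula}.
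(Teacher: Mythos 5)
Your high-level route (Proposition \ref{RestrictingSupercharacters} combined with Proposition \ref{BasicBFormula}) is the same as the paper's, but the step you set aside as routine bookkeeping is exactly where the content lies, and as stated it is false: the statistics are \emph{not} preserved by $\Ext_\cP^\cN$. Take $\cN$ to be $1<2<3<4$ and $\cP\triangleleft\cN$ the normal subposet with $i\prec_\cP k$ precisely for $i\in\{1,2\}$, $k\in\{3,4\}$, and let $\lambda_{14}=1$ be the only nonzero entry of $\lambda$. Then $\dim_L(\lambda)=\dim_R(\lambda)=1$ (only $j=3$ satisfies $1\prec_\cP j\prec_\cN 4$, only $j=2$ satisfies $1\prec_\cN j\prec_\cP 4$), whereas $\dim_L(\Ext_\cP^\cN(\lambda))=\dim_R(\Ext_\cP^\cN(\lambda))=2$; correspondingly $\chi^\lambda_{(1^4)}(1)=q^2(q-1)$ while $\chi^{\Ext_\cP^\cN(\lambda)}_{(1^4)}(1)=q^4(q-1)$. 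So Proposition \ref{BasicBFormula} at the identity gives the degree of a character of the \emph{larger} group $\UT_\cN$, not $\chi^\lambda_{(1^N)}(1)$, and Proposition \ref{RestrictingSupercharacters} evaluated at $u=1$ is the tautology $1=1$; your cancellation argument therefore never produces the prefactor of the corollary. The nesting count is not invariant either: with $\mu_{23}=1$, the chain $1\prec2\prec3\prec4$ exists in $\cN$ but not in $\cP$ (since $1\not\prec_\cP 2$), and a direct computation in this abelian group gives $\chi^\lambda_{(1^4)}(u_\mu)=q(q-1)=\chi^\lambda_{(1^4)}(1)/q$, so the factor $q^{-1}$ coming from the $\cN$-chain is genuinely present and would be lost by your proposed replacement of $\cN$-chains by $\cP$-chains. (Incidentally, $\beta=(1^N)$ forces $\cQ=\cN$, not $\cP=\cQ$, so $\dim_L\neq\dim_R$ in general.)

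The missing ingredient, which is how the paper closes the argument, is a separate computation of the degree on the subgroup: by (\ref{SupercharacterDefinition}), $\chi^\lambda_{(1^N)}(1)=|B_\cN e^*_\lambda B_\cN|$ with the orbit taken inside $\fkut^*_{((1^N),\cP)}$, and this equals $(q-1)^{|\lambda|}\,q^{\dim_L(\lambda)+\dim_R(\lambda)-\crs(\lambda)}$ by Lemma \ref{StatsInterpretation}. That is where the $\cP$-versions of $\dim_L$, $\dim_R$, $\crs$ enter the prefactor, while the remaining factor involving the nesting and $|\lambda\cap\mu|$ is inherited verbatim, for the extended data, from the ratio $\chi^{\Ext_\cP^\cN(\lambda)}_{(1^N)}(u_\mu)/\chi^{\Ext_\cP^\cN(\lambda)}_{(1^N)}(1)$ supplied by Propositions \ref{RestrictingSupercharacters} and \ref{BasicBFormula}. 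With that degree computation inserted, and without any claim that the statistics transfer across $\Ext_\cP^\cN$, your outline becomes the paper's proof; as written, it does not yield the stated formula.
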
 

\begin{proof}
By  Proposition \ref{BasicBFormula},
\begin{equation*}
\chi_{(1^N)}^{\Ext_{\cP}^{\cN}(\lambda)}(1)=\frac{q^{\dim_L(\Ext_{\cP}^{\cN}(\lambda))+\dim_R(\Ext_{\cP}^{\cN}(\lambda))}(q-1)^{|\lambda|}}{q^{\crs(\Ext_{\cP}^{\cN}(\lambda))}}.
\end{equation*}
Let $e^*_\lambda\in \fkut_{((1^N),\cP)}^*$ be in the $\lambda$-orbit.  Then by (\ref{SupercharacterDefinition}),
\begin{align*}\chi_{(1^N)}^\lambda(1)&=|B_\cN e^*_\lambda B_\cN|\\
&=(q-1)^{|\lambda|}|\UT_\cN e^*_\lambda \UT_\cN|\\
&=(q-1)^{|\lambda|}\frac{|\UT_\cN e^*_\lambda||e^*_\lambda \UT_\cN|}{|\UT_\cN e^*_\lambda \cap e^*_\lambda \UT_\cN|}\\
&=(q-1)^{|\lambda|}\frac{q^{\dim_L(\lambda)+\dim_R(\lambda)}}{q^{\crs(\lambda)}},
\end{align*}
where the last equality follows from Lemma \ref{StatsInterpretation}.  Apply Proposition \ref{RestrictingSupercharacters} to Proposition \ref{BasicBFormula} to obtain the formula.
\end{proof}

\subsection{The basic building block: the line.}

Consider the case where 
$$\beta=(m,n), \quad\cQ=
\bdry^{-1}(\beta)=\begin{tikzpicture}[baseline=.4cm, scale=.8]
	\foreach \x/\y/\z in {{.5}/0/1,{1.5}/0/2,{3.5}/0/m}
		{\node (\x) at (\x,\y) [inner sep = -1pt] {$\scs \bullet$};
		\node at (\x,-.3) {$\scs \z$};
		}
	\foreach \x/\y/\z in {0/1/{m+1},1/1/{m+2},4/1/{m+n}}
		{\node (\x) at (\x,\y) [inner sep = -1pt] {$\scs \bullet$};
		\node at (\x,1.3) {$\scs \z$};
		}
	\foreach \x/\y in {2.5/0,2.5/1}
		\node at (\x,\y) {$\cdots$};
	\foreach \x in {{.5},{1.5},{3.5}}
		{\foreach \y in {0,1,4}
		\draw (\x,0) -- (\y,1);
		}
\end{tikzpicture}
\qquad \text{where}\qquad \cN=\begin{tikzpicture}[scale=.7,baseline=1cm]
	\foreach \y/\z in {0/1,1/2,3/{m+n}}
		{\node (\y) at (0,\y) [inner sep = -1pt] {$\scs\bullet$};
		\node at (.6,\y) {$\scs\z$};}
		\draw (0) -- (1) -- (0,1.5);
		\draw (0,2.5) -- (0,3);
		\node at (0,2.1) {$\vdots$};
\end{tikzpicture}.$$

Then
$$\UT_{(\beta,\cQ)}=\left\{\left[\begin{array}{c|c} \Id_m & A\\ \hline 0 & \Id_n\end{array}\right]\ \bigg|\ A\in M_{m\times n}(\FF_q)\right\}\cong (\FF_q^+)^{mn}.$$
In this case, the indexing set for the supercharacters and superclasses is given by
$$\cT_{\begin{tikzpicture}[scale=.3]
\foreach \y in {0,1}
	\node (\y) at (0,\y) [inner sep=-1pt] {$\scs\bullet$};
	\draw (0) -- (1);
	\end{tikzpicture}}^{(m,n)} =\{0,1,\ldots,\min\{m,n\}\}, \quad\text{where the superclass of } \left[\begin{array}{c|c} \Id_m & A\\ \hline 0 & \Id_n\end{array}\right]\text{ is labelled by } \rank(A).$$

\begin{theorem}\label{1BlockSupercharacters}
 If  $0\leq j,l\leq \min\{m,n\}$ and $u_{(j)}=\Id_{m+n}+e_{(j)}\in \UT_{(\beta,\cQ)}$,  then
$$\chi^{(l)}_{(m,n)} (u_{(0)})=|\GL_l(\FF_q)|\qbin{m}{l}\qbin{n}{l}=\begin{array}{c}\text{$\#$  $m\times n$ matrices}\\ \text{of rank $l$}\end{array},$$
and
$$\chi_{(m,n)}^{(l)}(u_{(j)})=\sum_{(a,b)\vDash l} (-1)^a q^{bj+\binom{a}{2}}\qbin{j}{a}\chi^{(b)}_{(m-j, n-j)} (u_{(0)}).$$
\end{theorem}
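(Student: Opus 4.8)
The plan is to work entirely with the group $\UT_{(\beta,\cQ)}\cong M_{m\times n}(\FF_q)$, on which $P_\beta$ acts through its Levi factor $L_\beta=\GL_m\times\GL_n$ (the unipotent radical acts trivially since $\UT_{(\beta,\cQ)}$ is abelian), so the $P_\beta$-orbits on $\fkut_{(\beta,\cQ)}\cong M_{m\times n}(\FF_q)$ and on its dual are exactly the rank strata. First I would establish the degree formula: the $\lambda=(l)$-orbit in $\fkut_{(\beta,\cQ)}^*\cong M_{m\times n}(\FF_q)$ is the set of rank-$l$ matrices, and $\chi^{(l)}_{(m,n)}(u_{(0)})=\chi^{(l)}_{(m,n)}(1)$ equals the size of that orbit, which is the number of $m\times n$ matrices of rank $l$. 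The standard count of such matrices is $|\GL_l(\FF_q)|\qbin{m}{l}\qbin{n}{l}$ (choose an $l$-dimensional column space, an $l$-dimensional row space, and an isomorphism between them), giving the first formula.

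For the second formula I would compute $\chi^{(l)}_{(m,n)}(u_{(j)})=\sum_{z\in P_\beta y_{(l)}P_\beta}\vartheta\circ z(e_{(j)})$ directly, where $e_{(j)}\in M_{m\times n}(\FF_q)$ is a fixed rank-$j$ matrix, say $w_j$ placed in a corner block as in \eqref{emuChoice}, and the sum runs over all rank-$l$ matrices $z$ (viewing $z$ as a linear functional $X\mapsto\tr(z^{\mathrm t}X)$ or similar pairing). So the quantity to evaluate is $\sum_{\rank(z)=l}\vartheta(\langle z,e_{(j)}\rangle)$. Because the pairing only sees the upper-left $j\times j$ block of $z$, I would split each rank-$l$ matrix $z$ according to the rank $a$ of its restriction to the first $j$ rows and $j$ columns; by Gaussian elimination / the $\GL_j\times\GL_j$ action on that corner one reduces $\langle z,e_{(j)}\rangle$ to $\langle z',w_j\rangle$ where $z'$ is the corner block, and $\sum_{\rank(z')=a \text{ in } M_{j\times j}}\vartheta(\tr(w_j z'))$ is a Kloosterman-type sum over $M_{j\times j}$ that evaluates to $(-1)^a q^{\binom a2}\qbin ja \cdot(\text{something})$ — this is the source of the $(-1)^a q^{\binom a2}\qbin ja$ factor. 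The remaining freedom in $z$ (the part outside the $j\times j$ corner, constrained to total rank $l$, contributing rank $b$ where $(a,b)\vDash l$) is counted by the number of rank-$b$ matrices in $M_{(m-j)\times(n-j)}(\FF_q)$ after accounting for the $q^{bj}$ factor coming from the ways to extend a rank-$(a+b)$-ish configuration; that count is exactly $\chi^{(b)}_{(m-j,n-j)}(u_{(0)})$ by the degree formula already proved.

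The main obstacle I expect is the careful bookkeeping in that last paragraph: cleanly decomposing the set of rank-$l$ matrices $z$ according to the behavior of their $j\times j$ corner, verifying that the character sum over the corner factors off exactly as $(-1)^a q^{\binom a2}\qbin ja$ (using that $[a]!$ is palindromic, cf. \eqref{InvolutionSum}, together with \eqref{WeightqBinomial}), and confirming that the residual count of completions of a rank-$(l)$ matrix with prescribed corner rank $a$ contributes precisely $q^{bj}$ times the number of rank-$b$ matrices of the smaller size, with $(a,b)$ running over compositions of $l$. An alternative, possibly cleaner, route for the second formula is to use \eqref{BSupercharacterDecomposition} to write $\chi^{(l)}_{(m,n)}$ as a sum of $B_\cN$-supercharacters $\chi^{z'}_{(1^{m+n})}$ over set partitions $z'$ in the rank-$l$ orbit and then apply the explicit $B_\cN$-formula of Proposition \ref{BasicBFormula}; one would group the set partitions by how many of their arcs land in the ``active'' $j\times j$ window and sum, which turns the identity into a $q$-binomial convolution that can be checked using \eqref{WeightqBinomial} and \eqref{InvolutionSum}. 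I would present whichever of the two computations is shorter, but I anticipate the direct matrix-counting argument is the more transparent one and would lead with it.
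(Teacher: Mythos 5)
Your argument for the degree is fine (the orbit is a rank stratum and the degree is the orbit size), and the exponential-sum evaluation $\sum_{z'\in M_{j\times j},\,\rank z'=a}\vartheta(\tr(w_jz'))=(-1)^aq^{\binom{a}{2}}\qbin{j}{a}$ is also correct. The genuine gap is in the other half of your preferred route: the claim that the number of rank-$l$ matrices in $M_{m\times n}(\FF_q)$ whose distinguished $j\times j$ corner is a fixed matrix of rank $a$ equals $q^{bj}$ times the number of rank-$b$ matrices in $M_{(m-j)\times(n-j)}(\FF_q)$ with $b=l-a$. This is false, and with it the term-by-term match with the theorem's right-hand side. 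Take $m=n=2$, $j=1$, $l=1$, so the pairing sees the single entry of $z$ paired with $w_1$. For that entry fixed and nonzero ($a=1$) the rank-one completions are the matrices with that row equal to $(c,x)$ and the other row a multiple $t(c,x)$, i.e. $q^2$ of them, not $q^{0}\cdot 1=1$; for the entry zero ($a=0$) the count is $2q^2-q-1$, not $q(q-1)$. The totals do agree, since $(2q^2-q-1)\cdot 1+q^2\cdot(-1)=q^2-q-1$ equals the theorem's $q(q-1)-1$, but only after a rearrangement that stratifying by corner rank does not supply. So the direct matrix-counting route forces you to compute the genuinely more complicated completion numbers $N_a=\#\{z:\rank z=l,\ \text{corner}=z_0'\}$ (they involve all four blocks of $z$, not just the $(m-j)\times(n-j)$ block) and then to prove a nontrivial $q$-identity converting $\sum_a N_a(-1)^aq^{\binom{a}{2}}\qbin{j}{a}$ into the stated sum over $(a,b)\vDash l$; none of this is in your sketch, and it is exactly where the work lies.

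Your fallback route is essentially the paper's proof: expand via (\ref{BSupercharacterDecomposition}) into $B_\cN$-supercharacters $\chi^{\nu}_{(1^{m+n})}$ with $|\nu|=l$, observe that Corollary \ref{BSupercharacterFormula} kills every $\nu$ with support outside the $j$ anti-diagonal positions $\cA$ occupied by $w_j$ and the complementary rectangle $\cB=\{1,\ldots,m-j\}\times\{m+j+1,\ldots,m+n\}$, and split $\nu=\nu_\cA+\nu_\cB$ with $a=|\nu_\cA|$, $b=|\nu_\cB|$. Note that there $a$ counts arcs of the orbit representatives landing on $\cA$, not the rank of a corner of a generic orbit element, which is why that stratification, unlike yours, matches the formula term by term: the sum over $\nu_\cA$ (subsets $\cD$, via (\ref{WeightqBinomial})) and over $\nu_\cB$ (triples $(\cR,\cC,w)$, via (\ref{InvolutionSum}) and (\ref{WeightqBinomial})) factor cleanly into $(-1)^aq^{bj+\binom{a}{2}}\qbin{j}{a}\,\chi^{(b)}_{(m-j,n-j)}(u_{(0)})$. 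To make your proposal complete, carry out that computation rather than leading with the matrix-counting argument.
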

\begin{remark} 
Note that in the sum not all compositions $(a,b)$ of $l$ give nonzero terms.   We use the convention that $|\GL_0(\FF_q)|=1$.
\end{remark}
\begin{proof}
Fix $e_{(l)}^*\in \fkut_\cQ^*$ in the orbit corresponding to $l$, and let
$$u_{(j)}=\left[\begin{array}{c|c} \Id_m & \begin{array}{@{}c|c@{}} 0 & 0 \\ \hline w_j & 0\end{array} \\ \hline 0 & \Id_n\end{array}\right], \qquad \text{where} \qquad w_j=\left[\begin{array}{ccc} 0 & & 1\\ & \adots & \\ 1 & & 0 \end{array}\right]\in \GL_j,$$
be the usual representative (as in (\ref{emuChoice})).
By (\ref{BSupercharacterDecomposition}),
$$\chi_{(m,n)}^{(l)}(u_{(j)})=\sum_{B_\cN e_{\tilde\nu}^* B_\cN\in P_Q e_{(l)}^* P_\beta} \chi_{(1^N)}^{e_{\tilde\nu}^*}(u_{(j)})=\sum_{\nu\in \cT_\cQ^{(1^N)}\atop |\nu|=l} \chi_{(1^N)}^{\nu}(u_{(j)}).$$
Let 
\begin{align*}
\cA&=\{(m-j+i,m+j+1-i)\mid 1\leq i\leq j\}\\
\cB&=\{1,\ldots, m-j\}\times \{m+j+1,\ldots,m+n\}
\end{align*}
or visually, the coordinates
$$\left[\begin{array}{c|c|c|c}
 \Id_{m-j} & 0 & 0 & \begin{array}{ccc} \cB & \cdots & \cB\\ 
 \vdots & \ddots & \vdots \\ \cB & \cdots & \cB\end{array}\\ \hline
0 & \Id_j & \begin{array}{ccc} 0 & & \cA\\ & \adots & \\ \cA & & 0\end{array} & 0 \\ \hline
0  & 0 & \Id_j & 0\\ \hline
0 & 0 & 0 & \Id_{n-j}
\end{array}\right].$$
Note that $\chi_{(1^N)}^\nu(u_{(j)})=0$ unless
$$\nu_{ik}\neq 0 \qquad\text{implies $(i,k)\in \cA\cup \cB.$}$$
For $\nu\in \cT_\cQ^{(1^N)}$, let $\nu_\cA,\nu_\cB\in \cT_\cQ^{(1^N)}$ be given by
$$(\nu_\cA)_{ij}=\left\{\begin{array}{ll} \nu_{ij} & \text{if $(i,j)\in \cA$,}\\ 0 & \text{otherwise,}\end{array}\right.\qquad
(\nu_\cB)_{ij}=\left\{\begin{array}{ll} \nu_{ij} & \text{if $(i,j)\in \cB$,}\\ 0 & \text{otherwise.}\end{array}\right.$$
Then by Corollary \ref{BSupercharacterFormula},
\begin{align}
\chi_{(m,n)}^{(l)}(u_{(j)})&=\sum_{\nu=\nu_\cA+\nu_\cB\in \cT_\cQ^{(1^N)}\atop |\nu| =l}\frac{q^{\dim_L(\nu)+\dim_R(\nu)}(q-1)^{l}}{q^{\crs(\nu_\cB)}}\frac{1}{q^{\nst_{\cA}^{\nu_\cA}+|\nu_\cB|j}}\left(\frac{1}{1-q}\right)^{|\nu_\cA|} \label{KeyIdentity}\\
&=(q-1)^l\sum_{\alpha,\gamma\in \cT_\cQ^{(1^N)}\atop \alpha_\cA=\alpha, \gamma_\cB=\gamma, |\alpha+\gamma| =l}\frac{q^{\dim_L(\gamma)+\dim_R(\gamma)}}{q^{\crs(\gamma)+|\gamma| j}}\frac{q^{\dim_L(\alpha)+\dim_R(\alpha)}}{q^{\nst_{\cA}^{\alpha}}}\left(\frac{1}{1-q}\right)^{|\alpha|}\notag \\
&=(q-1)^l\sum_{{(a,b)\vDash l\atop a\leq j} \atop b\leq m-j}\left(\frac{1}{1-q}\right)^{a}\sum_{{\alpha,\gamma\in \cT_\cQ^{(1^N)}\atop \alpha_\cA=\alpha, \gamma_\cB=\gamma,}\atop |\alpha|=a,|\gamma|=b}\frac{q^{\dim_L(\gamma)+\dim_R(\gamma)}}{q^{\crs(\gamma)+b j}}\frac{q^{\dim_L(\alpha)+\dim_R(\alpha)}}{q^{\nst_{\cA}^{\alpha}}}.\notag
\end{align}
Note that choice of $\gamma$ with $\gamma_\cB=\gamma$ and $|\gamma|=b$ is determined by a triple $(\cR,\cC,w)$ where $\cR\times \cC\subseteq \cB$ with $|\cR|=|\cC|=b$ and $w:\cR\rightarrow \cC$ is a bijection (then  $\gamma_{ik}=1$ if and only if $i\in \cR$ and $k=w(i)$).  For such a triple $(\cR,\cC,w)$ corresponding to $\gamma$ we have
$$\frac{q^{\dim(\gamma)}}{q^{bj}}=q^{\wt_{[1,m-j]}^\uparrow(\cR)+\wt_{[m+j+1,m+n]}^\downarrow(\cC)+bj},\qquad \text{and}\qquad \crs(\gamma)=\mathrm{inv}(w\circ w_b).$$
A choice of $\alpha$ with $\alpha_\cA=\alpha$ and $|\alpha|=a$ is completely determined by $\cD\subseteq \{m+1,\ldots, m+j\}$ with $|\cD|=a$.  In this case, 
$$\frac{q^{\dim(\alpha)}}{q^{\nst_{\cA}^\alpha}}=q^{\wt_{[m+1,m+j]}^\downarrow(\cD)}.$$
For fixed $a$, $b$, $\cC$ and $\cR$ we can sum over choices of $w$, $\cD$ independently.   Then (\ref{InvolutionSum}) and (\ref{WeightqBinomial}) give
\begin{equation*}
\chi_{(m,n)}^{(l)}(u_{(j)})=(q-1)^l\sum_{{(a,b)\vDash l\atop a\leq j} \atop b\leq m-j}q^{\binom{a}{2}}\qbin{j}{a}\left(\frac{1}{1-q}\right)^{a}\sum_{\cR\times \cC\subseteq \cB\atop |\cR|=|\cC|=b} q^{\wt_{[1,m-j]}^\uparrow(\cR)+\wt_{[m+j+1,m+n]}^\downarrow(\cC)+bj} \frac{[b]!}{q^{\binom{b}{2}}}.
\end{equation*}
Finally, sum over $\cC$ and $\cR$ independently and apply (\ref{WeightqBinomial})  to get
\begin{align*}
\chi_{(m,n)}^{(l)}(u_{(j)}) &=(q-1)^l\sum_{{(a,b)\vDash l\atop a\leq j} \atop b\leq m-j}q^{\binom{a}{2}}\qbin{j}{a}\left(\frac{1}{1-q}\right)^{a} q^{bj+2\binom{b}{2}}\qbin{m-j}{b}\qbin{n-j}{b} \frac{[b]!}{q^{\binom{b}{2}}}\\
&=\sum_{(a,b)\vDash l}(-1)^aq^{bj+\binom{a}{2}}\qbin{j}{a} |\GL_b(\FF_q)|\qbin{m-j}{b}\qbin{n-j}{b},
\end{align*}
as desired.
\end{proof}

\subsection{General supercharacter formula}

Let $(\beta,\cP)$ be a unipotent polytope with $\sInt(\cP)$ as in Section \ref{SectionNormalPosets}.  For $\lambda,\mu\in \cT_\cP^\beta$, let
$$\begin{array}{r@{\ }c@{\ }c@{\ }c}\loc_\mu^\lambda: & \sInt(\cP) & \longrightarrow  & \ZZ_{\geq 0}\times \ZZ_{\geq 0}\\
&(j,l) & \mapsto  &\dd\Big(\beta_j-\sum_{j\prec_\cP k\prec_\cP l} \mu_{jk}-\sum_{l\prec_\cP m} \lambda_{jm},\beta_{l}-\sum_{j\prec_\cP k\prec_\cP l} \mu_{kl}-\sum_{i\prec_\cP j} \lambda_{il}\Big).
\end{array}$$
For example, if $\beta=(3,6,3,4,5,1)$,
$$\lambda=
\begin{tikzpicture}[scale=.4,baseline=.5cm]
	\foreach \x/\y/\num in 
	{0/2/2,1/2/0,2/2/1,3/2/0,
	0/1/0,1/1/0,2/1/1,3/1/1,
		1/0/0,2/0/1,3/0/0}
{
  \draw (\x,\y) +(-0.5,-0.5) rectangle ++(0.5,0.5);
  \draw (\x,\y) node {$\scs\num$};
}
\end{tikzpicture}
\qquad\text{and}\qquad
\mu=
\begin{tikzpicture}[scale=.4,baseline=.5cm]
	\foreach \x/\y/\num in 
	{0/2/0,1/2/1,2/2/0,3/2/1,
	0/1/1,1/1/0,2/1/2,3/1/0,
		1/0/1,2/0/2,3/0/0}
{
  \draw (\x,\y) +(-0.5,-0.5) rectangle ++(0.5,0.5);
  \draw (\x,\y) node {$\scs\num$};
}
\end{tikzpicture}
$$
then 
\begin{align*}
\loc^\lambda_{\mu}(2,5)&=\bigg(\beta_2-
\begin{tikzpicture}[scale=.4,baseline=.4cm]
	\fill[gray!30!white] (0,1)+(-0.5,-0.5) rectangle ++(0.5,0.5);
	\fill[gray!30!white] (1,1)+(-0.5,-0.5) rectangle ++(0.5,0.5);
	\draw (2,1) +(-0.4,-0.4) rectangle ++(0.4,0.4);
	\foreach \x/\y/\num in 
	{0/2/0,1/2/1,2/2/0,3/2/1,
	0/1/1,1/1/0,2/1/2,3/1/0,
		1/0/1,2/0/2,3/0/0}
{
  \draw (\x,\y) +(-0.5,-0.5) rectangle ++(0.5,0.5);
  \draw (\x,\y) node {$\scs\num$};
}
\end{tikzpicture}\ -
\begin{tikzpicture}[scale=.4,baseline=.4cm]
	\fill[gray!30!white] (3,1)+(-0.5,-0.5) rectangle ++(0.5,0.5);
	\draw (2,1) +(-0.4,-0.4) rectangle ++(0.4,0.4);
	\foreach \x/\y/\num in 
	{0/2/2,1/2/0,2/2/1,3/2/0,
	0/1/0,1/1/0,2/1/1,3/1/1,
		1/0/0,2/0/1,3/0/0}
{
  \draw (\x,\y) +(-0.5,-0.5) rectangle ++(0.5,0.5);
  \draw (\x,\y) node {$\scs\num$};
}
\end{tikzpicture}
,\beta_5-
\begin{tikzpicture}[scale=.4,baseline=.4cm]
	\fill[gray!30!white] (2,0)+(-0.5,-0.5) rectangle ++(0.5,0.5);
	\draw (2,1) +(-0.4,-0.4) rectangle ++(0.4,0.4);
	\foreach \x/\y/\num in 
	{0/2/0,1/2/1,2/2/0,3/2/1,
	0/1/1,1/1/0,2/1/2,3/1/0,
		1/0/1,2/0/2,3/0/0}
{
  \draw (\x,\y) +(-0.5,-0.5) rectangle ++(0.5,0.5);
  \draw (\x,\y) node {$\scs\num$};
}
\end{tikzpicture}
-\begin{tikzpicture}[scale=.4,baseline=.4cm]
	\fill[gray!30!white] (2,2)+(-0.5,-0.5) rectangle ++(0.5,0.5);
	\draw (2,1) +(-0.4,-0.4) rectangle ++(0.4,0.4);
	\foreach \x/\y/\num in 
	{0/2/2,1/2/0,2/2/1,3/2/0,
	0/1/0,1/1/0,2/1/1,3/1/1,
		1/0/0,2/0/1,3/0/0}
{
  \draw (\x,\y) +(-0.5,-0.5) rectangle ++(0.5,0.5);
  \draw (\x,\y) node {$\scs\num$};
}
\end{tikzpicture}
\bigg)\\
&=(6-(1+0)-1,5-2-1)\\
&=(4,2).
\end{align*}
\begin{theorem} \label{FullFormula}
For $\lambda,\mu\in \cT_\cP^\beta$ and $u_\mu=1+e_\mu$,
$$\chi^\lambda_\beta(u_\mu)=\frac{q^{\dim_L(\lambda)+\dim_R(\lambda)}}{q^{\nst^\lambda_\mu+\crs(\lambda)}} \prod_{j\prec_\cP l} \chi^{(\lambda_{jl})}_{\loc^\lambda_{\mu}(j,l)}(u_{(\mu_{jl})}).$$
\end{theorem}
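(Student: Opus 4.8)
The plan is to expand $\chi^\lambda_\beta(u_\mu)$ over the finer $B_\cN=P_{(1^N)}$-theory, apply the closed formula of Corollary \ref{BSupercharacterFormula}, and reorganise the resulting sum over set partitions into a product over the boxes of $F_\cP$, recognising each factor as one of the two-block sums evaluated in the proof of Theorem \ref{1BlockSupercharacters}. Fix $e^*_{\tilde\lambda}\in\fkut_{(\beta,\cP)}^*$ in the $\lambda$-orbit, and take $u_\mu=1+e_\mu$ with $e_\mu=e_{\tilde\mu}$ the distinguished superclass representative of (\ref{emuChoice}), whose restriction to each block pair $(j,l)$ is the corner anti-diagonal block $w_{\mu_{jl}}$. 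By (\ref{BSupercharacterDecomposition}),
$$\chi^\lambda_\beta(u_\mu)=\sum_{\tilde\nu}\chi^{\tilde\nu}_{(1^N)}(u_{\tilde\mu}),$$
the sum over set partitions $\tilde\nu$ supported on $\fat_\beta(\cP)$ that lie in the $P_\beta$-orbit of $\lambda$; by the $\fkut^*$-analogue of Theorem \ref{SuperclassIndexing}, these are exactly the $\tilde\nu$ for which the number of arcs between $\cQ_j$ and $\cQ_l$ equals $\lambda_{jl}$ for each $(j,l)\in F_\cP$. Substituting Corollary \ref{BSupercharacterFormula} turns this into an explicit sum over such $\tilde\nu$.

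Sort the arcs of $\tilde\nu$ and of $\tilde\mu$ by the pair of blocks they join, $\tilde\nu=\bigsqcup_{(j,l)}\tilde\nu^{(j,l)}$ and $\tilde\mu=\bigsqcup_{(j,l)}\tilde\mu^{(j,l)}$. The heart of the proof is to show that, with $\lambda$ fixed, the sum over $\tilde\nu$ factors across the boxes of $F_\cP$: the exponents in Corollary \ref{BSupercharacterFormula} split into a piece depending only on $\lambda$ and $\mu$ at the block level, contributing $q^{\dim_L(\lambda)+\dim_R(\lambda)}/q^{\nst^\lambda_\mu+\crs(\lambda)}$, and, for each box $(j,l)$, a ``within-block'' remainder recording the positions of the endpoints of $\tilde\nu^{(j,l)}$ in $\cQ_j\times\cQ_l$ relative to the rows of $\cQ_j$ and columns of $\cQ_l$ not already consumed by other arcs of $\tilde\nu$ and $\tilde\mu$, together with the crossings of $\tilde\nu$ and nestings with $\tilde\mu$ that are internal to the box. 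One must check, from the definitions and from normality of $\cP$ (so that arcs in distinct boxes automatically have $\prec_\cN$-ordered endpoints), that this split is exact — the straddling crossings and nestings being redistributed between the block-level terms $\crs(\lambda),\nst^\lambda_\mu$ and the within-block remainders — and that the number of already-consumed rows of $\cQ_j$ and columns of $\cQ_l$ at box $(j,l)$ equals $\beta_j-(\loc^\lambda_\mu(j,l))_1$ and $\beta_l-(\loc^\lambda_\mu(j,l))_2$ respectively; this last point is precisely what the definition of $\loc^\lambda_\mu$ encodes.

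Granting the decoupling, summing the within-block remainder over the choices available in box $(j,l)$ — choosing $\lambda_{jl}$ unconsumed rows of $\cQ_j$, $\lambda_{jl}$ unconsumed columns of $\cQ_l$, and a bijection between them — is, after one invokes (\ref{WeightqBinomial}) and (\ref{InvolutionSum}), the exact computation carried out in the proof of Theorem \ref{1BlockSupercharacters} for the two-block group with block sizes $\loc^\lambda_\mu(j,l)$, superclass index $\mu_{jl}$, and supercharacter index $\lambda_{jl}$; hence it evaluates to $\chi^{(\lambda_{jl})}_{\loc^\lambda_\mu(j,l)}(u_{(\mu_{jl})})$. Since the corner-stacked shape of $\tilde\mu$ in (\ref{emuChoice}) forces distinct boxes to draw from disjoint pools of rows and columns inside each block, the sum over $\tilde\nu$ is the product of these box sums, and factoring out the common $q^{\dim_L(\lambda)+\dim_R(\lambda)}/q^{\nst^\lambda_\mu+\crs(\lambda)}$ gives the asserted identity. (If $\loc^\lambda_\mu(j,l)$ fails to have both coordinates at least $\lambda_{jl}$ for some box, the nonvanishing condition of Corollary \ref{BSupercharacterFormula} is violated for every admissible $\tilde\nu$, so both sides vanish, consistent with the convention $\qbin{m}{k}=0$ for $k>m$.)

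The main obstacle is the middle step: proving that once $\lambda$ is fixed the box contributions genuinely decouple with effective grid sizes exactly $\loc^\lambda_\mu(j,l)$, and tracking where every straddling crossing and nesting lands. The underlying combinatorics is that of Theorem \ref{1BlockSupercharacters}'s proof — selecting subsets of rows and columns and weighting by $q$-inversions and the $q$-weights of (\ref{WeightqBinomial}) — but now all block pairs are present simultaneously, so one must be careful about the order in which the boxes consume the shared rows and columns of a block and about interactions between arcs of different boxes that share a block. Once that bookkeeping is settled, the rest is routine.
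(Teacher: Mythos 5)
You follow the paper's route exactly: expand $\chi_\beta^\lambda(u_\mu)$ over the $B_\cN$-theory via (\ref{BSupercharacterDecomposition}), characterize the contributing $\tilde\nu$ by the block-rank condition (the dual of Theorem \ref{SuperclassIndexing}), substitute Corollary \ref{BSupercharacterFormula}, and then try to factor the resulting sum over boxes of $F_\cP$, matching each factor with the two-block computation (\ref{KeyIdentity}) at grid sizes $\loc_\mu^\lambda(j,l)$. The problem is that the step you explicitly defer (``one must check \dots the main obstacle is the middle step'') is not a bookkeeping afterthought: it is essentially the whole content of the paper's proof, and the one justification you do offer for the factorization is not correct as stated. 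Distinct boxes $(j,l)$ and $(j,l')$ do \emph{not} draw from disjoint pools of rows: every arc of $\tilde\nu$ in a box of the form $(j,\cdot)$ occupies a row of $\cQ_j$, and the $q$-weight $\wt^\uparrow_{\cQ_j}(\rowsupp(\gamma_{jl}))$ carried by a box depends on \emph{which} rows the other boxes and $\tilde\mu$ consumed, not merely on how many; the shape of $e_\mu$ in (\ref{emuChoice}) does nothing to decouple the $\tilde\nu$-arcs from one another. So the sum over $\tilde\nu$ does not factor term by term, and asserting the split ``granting the decoupling'' leaves the identity unproved.

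The paper resolves exactly this interaction by fixing a total order on the boxes (the anti-diagonal reading order of $F_\cQ$), building $\tilde\nu$ one box at a time, and proving explicit recursions for $\dim_L+\dim_R$, $\nst_{\tilde\mu}^{\tilde\nu}$ and $\crs(\tilde\nu)$ as each $\gamma_{jl}$ is added. Iterating these recursions shows that all cross-box contributions aggregate to precisely $\dim_L(\lambda)+\dim_R(\lambda)$, $\nst_\mu^\lambda$ and $\crs(\lambda)$, while each box is left weighted by $\wt^\uparrow_{\cR_{jl}}(\rowsupp(\gamma_{jl}))+\wt^\downarrow_{\cC_{jl}}(\colsupp(\gamma_{jl}))$ against the sets $\cR_{jl}$, $\cC_{jl}$ of still-available rows and columns, whose cardinalities are the two coordinates of $\loc_\mu^\lambda(j,l)$; only after this reduction is the comparison with (\ref{KeyIdentity}) legitimate, since that identity sums weights over a free choice of rows, columns and a bijection. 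Your outline correctly names the target of this accounting (redistribution of straddling crossings and nestings, effective sizes $\loc_\mu^\lambda(j,l)$), but without the recursions, or an equivalent argument showing the within-box weight depends only on the counts and not on the particular rows and columns consumed elsewhere, the central factorization remains an assertion, so the proposal has a genuine gap at its decisive step.
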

\begin{remark}
The following proof is designed to directly construct the above ``factorization."  Alternatively, it is perhaps more straightforward (though with even more tedious book-keeping) to follow the proof of Theorem \ref{1BlockSupercharacters}, and then to retroactively observe the desired factorization.
\end{remark}
\begin{proof}
Let $\tilde\lambda=\Ext_{\cP}^{\cN}(\lambda)$.
Then by (\ref{BSupercharacterDecomposition}),
$$\chi_\beta^\lambda(u_\mu)=\sum_{\tilde\nu\in \cT_\cP^{(1^N)}\atop e^*_{\tilde\nu}\in P_\beta e^*_{\tilde\lambda} P_\beta} \chi_{(1^N)}^{\tilde\nu}(u_\mu).$$
Let $\cQ=\bdry^{-1}(\beta)$ with associated set composition $(\cQ_1,\ldots, \cQ_\ell)$.  
By a dual analogue to Theorem \ref{SuperclassIndexing}, 
  $e_{\tilde\nu}^*\in P_\beta e^*_{\tilde\lambda} P_\beta$ with $\tilde\nu\in \cT_\cP^{(1^N)}$ if and only if
$$\rank\Big(\Res_{\cQ_i\cup \cQ_j}(e_{\tilde\nu})\Big)=\rank\Big(\Res_{\cQ_i\cup \cQ_j}(e_{\tilde\lambda})\Big)=\lambda_{ij}\qquad \text{for all $i\prec_\cP j$}.$$
Let $e_\mu=e_{\tilde\mu}$ for $\tilde\mu\in \cT_\cP^{(1^N)}$.  By Corollary \ref{BSupercharacterFormula},
$$\chi_\beta^\lambda(u_\mu)=\sum_{\tilde\nu\in \cT_\cP^{(1^N)}\atop \left(\rank(\Res_{\cQ_i\cup \cQ_j} (e_{\tilde\nu}))\right)=\lambda}\delta_{\tilde\mu}^{\tilde\nu}\frac{q^{\dim_L(\tilde\nu)+\dim_R(\tilde\nu)}}{q^{\crs(\tilde\nu)+\nst_{\tilde\mu}^{\tilde\nu}}}(-1)^{|\tilde\nu\cap \tilde\mu|}(q-1)^{|\tilde\nu|-|\tilde\nu\cap \tilde\mu|},
$$
where
$$\delta_{\tilde\mu}^{\tilde\nu}=\prod_{i<j<k} \delta_{\tilde\nu_{ik}\tilde\mu_{ij},0}\delta_{\tilde\nu_{ik}\tilde\mu_{jk},0}$$
keeps track of which terms are zero (from Corollary \ref{BSupercharacterFormula}).

Note that for each set of choices of made for the blocks,
$$\{\gamma_{jl}\in \cT_\cP^{(1^N)} \mid 1\leq j\prec_\cP l<\ell, \Res_{\cQ_i\times \cQ_k}(\gamma_{jl})=\delta_{(i,k)(j,l)}\gamma_{jl}, \rank(\gamma_{jl})=\lambda_{jl}\},$$
such that no two $\gamma_{jl}$ have 1's in the same row or column, then
$$\sum_{j\prec_\cP l} \gamma_{jl}\in \cT_\cP^{(1^N)}.$$
Given a total order $\prec$ on the coordinates in $F_\cP$, we can iteratively construct an element $\tilde\nu\in \cT_\cP^{(1^N)}$ by choosing each $\cQ_j\times \cQ_l$ according to $\prec$.  We will use the $F_\cP$-total order induced by the $F_\cQ$-total order
$$(1,\ell)\prec (1,\ell-1)\prec (2,\ell)\prec (1,\ell-2)\prec \cdots \prec (\ell-1,\ell) \qquad \text{or}\begin{tikzpicture}[scale=.5,baseline=1.5cm]
\node at (6,6) {$\cdot$};
\node at (4.6,4.6) {$\scs\ddots$};
\foreach \x in {1,2,4,5}
	\draw[->] (\x,6) --  (6,\x);
\foreach \x in {2,3,4,5,6}
	\draw[gray] (6,\x) to [out=-60,in=120] (\x-1,6);
\foreach \x/\y/\z in {0/6/1,1/5/2,2/4/3,3/3/{\ddots},4/2/{\ell-1},5/1/{\ell-1},6/0/{\ell}}
	\node at (\x,\y) {$\scscs\z$};
\end{tikzpicture}.$$
Then we obtain a sequence
$$\{\tilde\nu^{(jl)}\in \cT_\cP^{(1^N)}\mid j\prec_\cP l\}, \quad \text{where} \quad \tilde\nu^{(jl)}= \gamma_{jl}+ \sum_{(i,k)\prec (j,l) } \gamma_{ik}.$$

At each step there will be limits on what rows and columns $\gamma_{jl}$ can have nonzero entries to be compatible with the previous choices, but we will obtain all possible $\tilde\nu$ in this way.  If we only choose $\tilde\nu$ such that $\chi^{\tilde\nu}(u_{\tilde\mu})\neq 0$, then the nonzero entries of $\gamma_{jl}$ can only be in rows and columns such that 
\begin{enumerate}
\item[(a)] there are no nonzero entries in $\tilde\nu^{(jl)}-\gamma_{jl}$ in that column or row, 
\item[(b)] there is no nonzero entry of $\tilde\mu$ strictly below in that column or strictly to the left in that row.
\end{enumerate}  
That is, we have 
$$\beta_j-\sum_{j\prec_\cP k\prec_\cP l} \mu_{jk}-\sum_{l\prec_\cP m} \lambda_{jm}\quad \text{rows and}\quad \beta_{l}-\sum_{j\prec_\cP k\prec_\cP l} \mu_{kl}-\sum_{i\prec_\cP j} \lambda_{il}\quad\text{columns}$$
to choose from for the nonzero entries of $\gamma_{jl}$.  Let 
$\rowsupp(\gamma_{jl})\subseteq \cQ_j$ be the rows in $\gamma_{jl}$ with nonzero entries, and $\colsupp(\gamma_{jl})\subseteq \cQ_l$ be the columns with nonzero entries. Let $\delta_\lambda^{(jl)}\in \cT_\cP^\beta$ be given by $(\delta_\lambda^{(jl)})_{ik}=\delta_{(i,k),(j,l)}\lambda_{jl}$, and let $\lambda^{(jl)}\in \cT_\cP^{\beta}$ label the orbit containing $e^*_{\tilde\nu^{(jl)}}\in \fkut_\cP^*$.  Then we have recursions,
\begin{align*}
\dim_L(\tilde\nu^{(jl)})+\dim_R(\tilde\nu^{(jl)})&=\dim_L(\tilde\nu^{(jl)}-\gamma_{jl})+\dim_R(\tilde\nu^{(jl)}-\gamma_{jl})+\dim_L(\delta_\lambda^{(jl)})+\dim_R(\delta_\lambda^{(jl)})\\
&\hspace*{.5cm} +\wt_{\cQ_j}^\uparrow(\rowsupp(\gamma_{jl}))+\wt_{\cQ_l}^\downarrow(\colsupp(\gamma_{jl}))\\
\nst_{\tilde\mu}^{\tilde\nu^{(jl)}}&=\nst_{\tilde\mu}^{\tilde\nu^{(jl)}-\gamma_{jl}}+\nst_\mu^{(\delta_\lambda^{(jl)})}+\#\{i'\prec j'\prec k'\prec l'\mid j'\in \cQ_j,(\gamma_{jl})_{i'l'}\tilde\mu_{j'k'}=1\}\\
&\hspace*{.5cm}+\#\{i'\prec j'\prec k'\prec l'\mid j'\notin \cQ_j,k'\in \cQ_l,(\gamma_{jl})_{i'l'}\tilde\mu_{j'k'}=1\}\\
\crs(\tilde\nu^{(jl)})&=\crs(\tilde\nu^{(jl)}-\gamma_{jl})+\crs(\lambda^{(jl)})-\crs(\lambda^{(jl)}-\delta_\lambda^{(jl)}) +\crs(\gamma_{jl})\\
&\hspace*{.5cm}+\wt^\uparrow_{\rowsupp(\tilde\nu^{(jl)})\cap \cQ_j-\rowsupp(\gamma_{jl})}(\rowsupp(\gamma_{jl}))+\wt^\downarrow_{\colsupp(\tilde\nu^{(jl)})\cap \cQ_l-\colsupp(\gamma_{jl})}(\colsupp(\gamma_{jl})).
\end{align*}
Iterating these recursions gives
\begin{align*}
\dim_L(\tilde\nu)+\dim_R(\tilde\nu)&= \dim_L(\lambda)+\dim_R(\lambda)+\sum_{j\prec_\cP l} \wt_{\cQ_j}^\uparrow(\rowsupp(\gamma_{jl}))+\wt_{\cQ_l}^\downarrow(\colsupp(\gamma_{jl}))\\
\nst_{\tilde\mu}^{\tilde\nu}&=\nst_\mu^\lambda+\sum_{j\prec_\cP l}\Big( \#\{i'\prec j'\prec k'\prec l'\mid j'\in \cQ_j,(\gamma_{jl})_{i'l'}\tilde\mu_{j'k'}=1\}\\
&\hspace*{2cm}+\#\{i'\prec j'\prec k'\prec l'\mid j'\notin \cQ_j,k'\in \cQ_l,(\gamma_{jl})_{i'l'}\tilde\mu_{j'k'}=1\}\Big)\\
\crs(\tilde\nu)&= \crs(\lambda)+ \sum_{j\prec_\cP l} \Big(\crs(\gamma_{jl})
+\wt^\uparrow_{\rowsupp(\tilde\nu^{(jl)})\cap \cQ_j-\rowsupp(\gamma_{jl})}(\rowsupp(\gamma_{jl}))\\
&\hspace*{2cm}+\wt^\downarrow_{\colsupp(\tilde\nu^{(jl)})\cap \cQ_l-\colsupp(\gamma_{jl})}(\colsupp(\gamma_{jl}))\Big).
\end{align*}
Let
\begin{align*}
\cR_{jl} &=\cQ_j-\{i'\in \cQ_j\mid (\tilde\mu)_{i'l'}\neq 0, \text{ for some $l'\in \cQ_k$, $k< l$}\}-(\rowsupp(\tilde\nu^{(jl)}-\gamma_{jl})\cap \cQ_j)\\
\cC_{jl} &= \cQ_l-\{k'\in \cQ_l\mid (\tilde\mu)_{j'k'}\neq 0, \text{ for some $j'\in \cQ_k$, $k> j$}\}-(\colsupp(\tilde\nu^{(jl)}-\gamma_{jl})\cap \cQ_l),
\end{align*}
denote the rows and columns in which $\gamma_{jl}$ may have nonzero entries.  Let $\tilde\mu_{jl}=\Res_{\cQ_j\times\cQ_l}(\tilde\mu)$ and $\gamma_{jl}^\circ=\gamma_{jl}-\gamma_{jl}\cap \tilde\mu_{jl}$.  Then since $\crs(\gamma_{jl})=\crs(\gamma_{jl}^\circ)$ for $\delta_{\mu_{jl}}^{\gamma_{jl}}\neq 0$,
\begin{align*}
\chi^\lambda_\beta(u_\mu)&=\frac{q^{\dim_L(\lambda)+\dim_R(\lambda)}}{q^{\crs(\lambda)+\nst_\mu^\lambda}}\prod_{j\prec_\cP l} \sum_{\gamma_{jl}, \tilde\nu^{(jl)}\in \cT_{\cP}^{(1^N)}\atop \chi^{\tilde\nu^{(jl)}}(u_{\tilde\mu})\neq 0} \frac{(q-1)^{\lambda_{jl}}q^{ \wt_{\cR_{jl}}^\uparrow(\rowsupp(\gamma_{jl}))+\wt_{\cC_{jl}}^\downarrow(\colsupp(\gamma_{jl}))}}{(1-q)^{|\gamma_{jl}\cap \tilde\mu_{jl}|}q^{\crs(\gamma_{jl}^\circ)+\nst_{\tilde\mu_{jl}}^{\gamma_{jl}\cap\tilde\mu_{jl}}+|\gamma_{jl}^\circ||\tilde\mu_{jl}|}}\\
&=\frac{q^{\dim_L(\lambda)+\dim_R(\lambda)}}{q^{\crs(\lambda)+\nst_\mu^\lambda}}\prod_{j\prec_\cP l}\chi^{(\lambda_{jl})}_{\loc^\lambda_{\mu}(j,l)}(u_{(\mu_{jl})}),
\end{align*}
where the second equality comes by letting $\nu_\cA=\gamma_{jl}\cap \tilde\mu_{jl}$ and $\nu_\cB=\gamma_{jl}^\circ$, and comparing with (\ref{KeyIdentity}).
\end{proof}

We also easily obtain a number of consequences (some of which no doubt have more direct proofs).

\begin{corollary}
For $\nu\in \cT_\cP^\beta$ with corresponding $e_{\tilde\nu}^*\in \fkut_\cP^*$,
$$|P_\beta e_{\tilde\nu}^* P_\beta|=\frac{q^{\dim_L(\lambda)+\dim_R(\lambda)}}{q^{\crs(\lambda)}} \prod_{j\prec_\cP l} |\GL_{\lambda_{jl}}(\FF_q)| \qbin{\beta_j-\sum_{l\prec_\cP m}\lambda_{jm}}{\lambda_{jl}}\qbin{\beta_l-\sum_{i\prec_\cP j}\lambda_{il}}{\lambda_{jl}}.$$
\end{corollary}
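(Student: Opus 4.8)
The plan is to recognize the orbit size as a supercharacter degree and obtain it as the $\mu=\mathbf{0}$ specialization of Theorem~\ref{FullFormula}. Write $\lambda$ for the lattice point called $\nu$ in the statement, and let $\mathbf{0}\in\cT_\cP^\beta$ be the zero point; the associated group element is $u_{\mathbf{0}}=1+e_{\mathbf{0}}=\Id_N$. Evaluating the supercharacter at the identity using its definition~(\ref{SupercharacterDefinition}),
$$\chi^\lambda_\beta(1)=\sum_{z\in P_\beta e^*_{\tilde\lambda}P_\beta}\vartheta\circ z(0)=|P_\beta e^*_{\tilde\lambda}P_\beta|,$$
since $\vartheta(0)=1$, so the left-hand side of the corollary is exactly $\chi^\lambda_\beta(u_{\mathbf{0}})$.

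Next I would substitute $\mu=\mathbf{0}$ into Theorem~\ref{FullFormula}. Since every summand of $\nst^\lambda_{\mathbf{0}}$ carries a factor $\mathbf{0}_{jk}=0$, we get $\nst^\lambda_{\mathbf{0}}=0$, so the scalar prefactor reduces to $q^{\dim_L(\lambda)+\dim_R(\lambda)}/q^{\crs(\lambda)}$, matching the claimed prefactor. Likewise, every $\mu$-dependent term in $\loc^\lambda_{\mathbf{0}}(j,l)$ vanishes, so
$$\loc^\lambda_{\mathbf{0}}(j,l)=\Big(\beta_j-\sum_{l\prec_\cP m}\lambda_{jm},\ \beta_l-\sum_{i\prec_\cP j}\lambda_{il}\Big),$$
and each factor of the product in Theorem~\ref{FullFormula} becomes $\chi^{(\lambda_{jl})}_{\loc^\lambda_{\mathbf{0}}(j,l)}(u_{(0)})$, evaluated at the identity $u_{(0)}=\Id$.

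Finally I would apply the first formula of Theorem~\ref{1BlockSupercharacters}, namely $\chi^{(l)}_{(m,n)}(u_{(0)})=|\GL_l(\FF_q)|\qbin{m}{l}\qbin{n}{l}$, with $m=\beta_j-\sum_{l\prec_\cP m'}\lambda_{jm'}$, $n=\beta_l-\sum_{i\prec_\cP j}\lambda_{il}$, and $l=\lambda_{jl}$. Plugging this into each factor of the product produces precisely the stated right-hand side, finishing the proof.

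There is no genuinely hard step: this is a clean specialization of the two main theorems at the zero superclass. The only points requiring (routine) verification are $u_{\mathbf{0}}=\Id_N$, $\nst^\lambda_{\mathbf{0}}=0$, and the simplified shape of $\loc^\lambda_{\mathbf{0}}$; and one should note that the compositions-sum appearing in Theorem~\ref{1BlockSupercharacters} does not enter here because we evaluate at $u_{(0)}$ (the $j=0$ case), where the closed product formula is directly available.
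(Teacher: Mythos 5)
Your proposal is correct and is essentially the paper's own argument: the paper likewise identifies $|P_\beta e_{\tilde\nu}^*P_\beta|$ with the degree $\chi_\beta^\nu(1)$ via the supercharacter definition and then evaluates Theorem \ref{FullFormula} at the identity (i.e.\ $\mu=\mathbf{0}$), with the block factors given by the degree formula of Theorem \ref{1BlockSupercharacters}. Your write-up just makes explicit the routine checks ($\nst^\lambda_{\mathbf{0}}=0$ and the simplification of $\loc^\lambda_{\mathbf{0}}$) that the paper leaves implicit.
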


\begin{proof} By (\ref{SupercharacterDefinition}) and Theorem \ref{FullFormula}, 
$$|P_\beta e_{\tilde\nu}^* P_\beta|=\chi_\beta^\nu(1)=\frac{q^{\dim_L(\lambda)+\dim_R(\lambda)}}{q^{\crs(\lambda)}} \prod_{j\prec_\cP l} |\GL_{\lambda_{jl}}(\FF_q)| \qbin{\beta_j-\sum_{l\prec_\cP m}\lambda_{jm}}{\lambda_{jl}}\qbin{\beta_l-\sum_{i\prec_\cP j}\lambda_{il}}{\lambda_{jl}},$$
as desired.
\end{proof}

Recall that the $\CC$-vector space of functions $\mathrm{f}(G)=\{\psi:G\rightarrow \CC\}$ of a group $G$ has a canonical inner product given by
$$\langle \psi, \theta\rangle=\frac{1}{|G|}\sum_{g\in G} \psi(g)\overline{\theta(g)};$$
with respect to this inner product, the irreducible characters are an orthonormal basis for the subspace of class functions.  While supercharacters are still orthogonal, they are generally no longer orthonormal.  In our case,
using a similar argument to \cite{DI},  
$$\langle \chi_\beta^\nu,\chi_\beta^\mu\rangle=\delta_{\nu\mu} \chi_\beta^\nu(1)\quad\text{for $\nu,\mu\in \cT_\cP^\beta$},$$
so we may deduce the following result.

\begin{corollary}
For $\nu,\mu\in \cT_\cP^\beta$,
$$\langle \chi_\beta^\nu,\chi_\beta^\mu\rangle=\delta_{\nu\mu}\frac{q^{\dim_L(\lambda)+\dim_R(\lambda)}}{q^{\crs(\lambda)}} \prod_{j\prec_\cP l} |\GL_{\lambda_{jl}}(\FF_q)| \qbin{\beta_j-\sum_{l\prec_\cP m}\lambda_{jm}}{\lambda_{jl}}\qbin{\beta_l-\sum_{i\prec_\cP j}\lambda_{il}}{\lambda_{jl}}.$$
\end{corollary}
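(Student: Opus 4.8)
The plan is to combine two ingredients: the orthogonality relation $\langle \chi_\beta^\nu,\chi_\beta^\mu\rangle = \delta_{\nu\mu}\,\chi_\beta^\nu(1)$ for $\nu,\mu\in\cT_\cP^\beta$ (asserted just before the corollary), and the explicit value $\chi_\beta^\nu(1)=|P_\beta e^*_{\tilde\nu}P_\beta|$ furnished by the preceding corollary. Granting the orthogonality relation, the corollary is immediate: substitute the product formula for $\chi_\beta^\nu(1)$ into $\delta_{\nu\mu}\chi_\beta^\nu(1)$. So the only thing requiring argument is the orthogonality relation itself, which the text attributes to a computation parallel to \cite{DI}, and which I would spell out.

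To prove $\langle \chi_\beta^\nu,\chi_\beta^\mu\rangle = \delta_{\nu\mu}\,\chi_\beta^\nu(1)$ I would run the standard Fourier-analytic computation on the $\FF_q$-vector space $\fkut_{(\beta,\cP)}$. Fix representatives $e^*_{\tilde\nu},e^*_{\tilde\mu}\in\fkut_{(\beta,\cP)}^*$ of the orbits labelled by $\nu$ and $\mu$, so that by (\ref{SupercharacterDefinition}) we have $\chi_\beta^\nu(u)=\sum_{z\in P_\beta e^*_{\tilde\nu}P_\beta}\vartheta\big(z(u-\Id_N)\big)$ for $u\in\UT_{(\beta,\cP)}$, and likewise for $\mu$. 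Expanding $\langle\chi_\beta^\nu,\chi_\beta^\mu\rangle$, using $\overline{\vartheta(t)}=\vartheta(-t)$, and replacing the summation variable $u$ by $x=u-\Id_N$ (which ranges over all of $\fkut_{(\beta,\cP)}$ since $\UT_{(\beta,\cP)}=\Id_N+\fkut_{(\beta,\cP)}$) gives
$$\langle\chi_\beta^\nu,\chi_\beta^\mu\rangle=\frac{1}{|\fkut_{(\beta,\cP)}|}\sum_{x\in\fkut_{(\beta,\cP)}}\ \sum_{z\in P_\beta e^*_{\tilde\nu}P_\beta}\ \sum_{w\in P_\beta e^*_{\tilde\mu}P_\beta}\vartheta\big((z-w)(x)\big).$$
Since $z-w$ is a linear functional on $\fkut_{(\beta,\cP)}$ and $\vartheta$ is a nontrivial character of $\FF_q^+$, the inner sum over $x$ equals $|\fkut_{(\beta,\cP)}|$ when $z=w$ and $0$ otherwise, so $\langle\chi_\beta^\nu,\chi_\beta^\mu\rangle=\big|P_\beta e^*_{\tilde\nu}P_\beta\cap P_\beta e^*_{\tilde\mu}P_\beta\big|$. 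By the dual analogue of Theorem \ref{SuperclassIndexing} used in the proof of Theorem \ref{FullFormula}, distinct elements of $\cT_\cP^\beta$ label distinct—hence disjoint—$P_\beta$-orbits on $\fkut_{(\beta,\cP)}^*$; therefore this intersection is empty when $\nu\neq\mu$, and when $\nu=\mu$ it equals $|P_\beta e^*_{\tilde\nu}P_\beta|$, which is exactly $\chi_\beta^\nu(1)$ (evaluate the displayed sum for $\chi_\beta^\nu$ at $u=\Id_N$). Feeding in the previous corollary then produces the stated formula.

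I do not expect a genuine obstacle here: the key step is precisely the ``orbit-counting/Plancherel'' manipulation already invoked for axiom (SC0) (cf. \cite[Lemma 4.1]{DI}), and everything else is formal. As an alternative to invoking the dual of Theorem \ref{SuperclassIndexing}, one could instead observe via (\ref{BSupercharacterDecomposition}) and the remark of Section \ref{SectionSupercharacters} that $\chi_\beta^\nu=\sum_{\psi\in X_\nu}\psi(1)\psi$ for sets $X_\nu$ of irreducible characters that partition $\Irr(\UT_{(\beta,\cP)})$—disjointness being axiom (SC3) for the $P_\beta$-theory—whence $\langle\chi_\beta^\nu,\chi_\beta^\mu\rangle=\delta_{\nu\mu}\sum_{\psi\in X_\nu}\psi(1)^2=\delta_{\nu\mu}\chi_\beta^\nu(1)$; either route lands on the same relation, and the corollary follows by substitution.
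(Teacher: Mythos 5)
Your proposal is correct and follows the paper's own route: the paper deduces the corollary from the orthogonality relation $\langle \chi_\beta^\nu,\chi_\beta^\mu\rangle=\delta_{\nu\mu}\chi_\beta^\nu(1)$ (attributed to the argument of Diaconis--Isaacs) together with the preceding corollary's formula for $\chi_\beta^\nu(1)=|P_\beta e^*_{\tilde\nu}P_\beta|$, and your Fourier-analytic computation over $\fkut_{(\beta,\cP)}$ is exactly the standard argument the paper is invoking. The only difference is that you spell out the orbit-counting step (and offer the equivalent route via $\chi_\beta^\nu=\sum_{\psi\in X_\nu}\psi(1)\psi$), which the paper leaves as a citation.
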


Recall, that if $(\{\text{conjugacy classes}\},\Irr(G))$ is the usual character theory of $G$, then 
$$\left\{\begin{array}{@{}c@{}} \text{Normal sub-}\\ \text{groups of $G$}\end{array}\right\}=\{K_A\mid A\subseteq \Irr(G)\},\quad \text{where}\quad K_A=\bigcap_{\chi\in A} \{g\in G\mid \chi(g)=\chi(1)\}.$$
It is therefore natural to ask which normal subgroups are obtained from a given supercharacter theory; I believe this has been largely unexplored.  However, for the case of $\UT_\beta$, the answer to this question is particularly pleasing.

\begin{corollary} Fix an integer composition $\beta=(\beta_1,\ldots,\beta_\ell)$  and let $\cL_\beta$ the usual total order $1<2<\cdots<\ell$,
$$\{K_A\mid A\subseteq \cT_\beta^{\cL_\beta}\}=\left\{\UT_{(\beta,\cP)}\ \bigg|\  \cP\triangleleft \cL_\beta\right\},\quad \text{where}\quad K_A=\bigcap_{\mu\in A} \{g\in \UT_{(\beta,\cP)}\mid \chi_\beta^\mu(g)=\chi_\beta^\mu(1)\}.$$
\end{corollary}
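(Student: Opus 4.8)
The plan is to first show that $\{K_A\mid A\subseteq\cT_\beta^{\cL_\beta}\}$ is exactly the set of normal subgroups of $\UT_\beta$ that are unions of $P_\beta$-superclasses, and then to identify the latter. One direction is immediate: for each $\mu$ the \emph{super-kernel} $\ker_{\mathrm{sup}}\chi_\beta^\mu:=\{g\in\UT_\beta\mid\chi_\beta^\mu(g)=\chi_\beta^\mu(1)\}$ equals $\bigcap_{\psi\mid\chi_\beta^\mu}\ker\psi$, hence is normal, and it is a union of $P_\beta$-superclasses by (SC2); so every $K_A$ is a normal subgroup that is a union of superclasses. For the converse, if $N\triangleleft\UT_\beta$ is a union of $P_\beta$-superclasses then $\UT_\beta/N$ carries the quotient supercharacter theory (cf.\ \cite{DI}), whose supercharacters are precisely the $\chi_\beta^\mu$ with $N\subseteq\ker_{\mathrm{sup}}\chi_\beta^\mu$; since in any supercharacter theory the intersection of all super-kernels equals $\bigcap_{\psi\in\Irr}\ker\psi=1$ (by (SC3)), pulling this back along $\UT_\beta\to\UT_\beta/N$ gives $N=K_A$ for $A=\{\mu\mid N\subseteq\ker_{\mathrm{sup}}\chi_\beta^\mu\}$.

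For the inclusion $\supseteq$, fix $\cP\triangleleft\cL_\beta$. The proposition of Section~\ref{SectionGroups} equating (P1)--(P3) gives $P_\beta\subseteq N_{\GL_\cN(q)}(\UT_{(\beta,\cP)})$, so in particular $\UT_{(\beta,\cP)}\triangleleft\UT_\beta$. Normality of $\cP$ in $\cL_\beta$ says $\sInt(\cP)$ is a dual order ideal of $\sInt(\cL_\beta)$, which translates exactly into the statement that the subspace $\fkut_{(\beta,\cP)}\subseteq\fkut_\beta$ spanned by the $e_{ij}$ with $i\prec_{\fat_\beta(\cP)}j$ is stable under left and right multiplication by $P_\beta$: left multiplication by a block-upper-triangular matrix can only lower a matrix's row-block index and right multiplication can only raise its column-block index, and these are the two closure conditions defining a dual order ideal. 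Hence for $x\in\fkut_{(\beta,\cP)}$ the entire $P_\beta$-superclass $\Id+P_\beta xP_\beta$ lies in $\UT_{(\beta,\cP)}$, so $\UT_{(\beta,\cP)}$ is a normal union of $P_\beta$-superclasses and therefore equals some $K_A$ (this is Remark~(R5)).

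For $\subseteq$, let $N\triangleleft\UT_\beta$ be a union of $P_\beta$-superclasses, write $\cQ=\bdry^{-1}(\beta)$ with set composition $(\cQ_1,\dots,\cQ_\ell)$, and let $D\subseteq\sInt(\cL_\beta)$ be the set of pairs $(a,b)$ such that $u-\Id$ has a nonzero $(\cQ_a,\cQ_b)$-block for some $u\in N$. First I would show $D$ is a dual order ideal: if $(a,b)\in D$ via $u\in N$ and $c<a$, conjugating $u$ by a unipotent $\Id+z$ with $z$ supported in the $(\cQ_c,\cQ_a)$-block (which normalizes $N$) yields an element of $N$ whose $(\cQ_c,\cQ_b)$-block is, up to higher-order terms that vanish there, $z_{ca}(u-\Id)_{ab}\neq 0$; a symmetric argument on columns handles $d>b$. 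Thus $D=\sInt(\cP)$ for a unique $\cP\triangleleft\cL_\beta$. Now each $P_\beta$-superclass inside $N$ has its block-antidiagonal representative $e_\mu$ of~(\ref{emuChoice}) supported on $D=\sInt(\cP)$, hence $e_\mu\in\fkut_{(\beta,\cP)}$, and by the $P_\beta$-bi-invariance of $\fkut_{(\beta,\cP)}$ established above the whole superclass lies in $\UT_{(\beta,\cP)}$; therefore $N\subseteq\UT_{(\beta,\cP)}$.

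The main obstacle is the reverse inclusion $\UT_{(\beta,\cP)}\subseteq N$ — that $N$ contains \emph{every} block it touches, not merely nonzero elements of those blocks. The clean formulation is that $N-\Id$ is an $\FF_q$-subspace of $\fkut_\beta$: granting this, $N-\Id$ is a two-sided $P_\beta$-submodule whose support is $D=\sInt(\cP)$, and an $L_\beta$-bi-stable subspace containing an element with nonzero $(\cQ_a,\cQ_b)$-block automatically contains the whole block (the $L_\beta$-bi-orbit together with linearity produces all rank-one, hence all, matrices in that block), so $N-\Id=\fkut_{(\beta,\cP)}$. That $N-\Id$ is closed under $\FF_q$-scaling is clear since scalar matrices lie in $L_\beta\subseteq P_\beta$; closure under addition is the delicate point, and I would obtain it by passing to the finer $B_\cN=P_{(1^N)}$-superclasses — every $P_\beta$-superclass is a union of $B_\cN$-superclasses, so $N$ is also a normal union of $B_\cN$-superclasses — and then invoking the structure theory of normal pattern subgroups in the spirit of \cite{MT}, which shows such a subgroup of $\UT_\cN$ is an algebra subgroup: the maximal torus inside $B_\cN$, the unipotent part, and the group law together force $N-\Id$ to be additively closed. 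Upgrading ``closed under $x\ast y=x+y+xy$ and under rescaling'' to ``closed under $+$'' is where the genuine work lies; everything else is bookkeeping with the explicit representatives of Theorem~\ref{SuperclassIndexing} and the normality criterion for $\cP$. As a consistency check, both families are closed under intersection, $K_A\cap K_{A'}=K_{A\cup A'}$ and $\UT_{(\beta,\cP)}\cap\UT_{(\beta,\cP')}=\UT_{(\beta,\cP\wedge\cP')}$ with $\sInt(\cP\wedge\cP')=\sInt(\cP)\cap\sInt(\cP')$ again a dual order ideal, which also gives an alternative route once the single-subgroup case is in hand.
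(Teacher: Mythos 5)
Your reduction of the problem to classifying supernormal subgroups is legitimate in outline: each super-kernel $\{g\mid\chi_\beta^\mu(g)=\chi_\beta^\mu(1)\}$ is indeed a normal subgroup that is a union of superclasses, and the converse statement (every subgroup that is a union of superclasses is an intersection of super-kernels, via the quotient supercharacter theory) is a standard fact, though it is due to Hendrickson rather than \cite{DI}. The containment $\{\UT_{(\beta,\cP)}\mid\cP\triangleleft\cL_\beta\}\subseteq\{\text{normal unions of superclasses}\}$ and your dual-order-ideal argument for the support set $D$ are also sound. The genuine gap is exactly the step you flag yourself: proving $\UT_{(\beta,\cP)}\subseteq N$, i.e.\ that $N-\Id$ is additively closed and fills every block it touches. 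Nothing you invoke supplies this. The result of \cite{MT} quoted in the paper goes in the opposite direction (a normal pattern subgroup is a union of superclasses), not that a normal union of superclasses is an algebra or pattern subgroup; and ``the group law forces additive closure'' is precisely what must be proved: for elements whose supports share a row or column, $x\ast y=x+y+xy$ has cross terms, and closure under $\ast$ together with $\FF_q$-rescaling does not formally yield closure under $+$. Even granting additivity, extracting a full $(\cQ_a,\cQ_b)$-block from an $L_\beta$-bi-stable subspace requires an argument, since your witnesses may have several nonzero blocks. So the inclusion $\{K_A\}\subseteq\{\UT_{(\beta,\cP)}\}$ is not established as written.

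For comparison, the paper avoids the classification problem entirely and argues directly from the character formula of Theorem \ref{FullFormula}: $\chi_\beta^\lambda(u_\mu)=\chi_\beta^\lambda(1)$ holds if and only if $\mu_{jk}\neq 0$ forces $\lambda_{il}=0$ whenever $i\leq j<k\leq l$. Since $K_A$ is a union of superclasses, this criterion applied to representatives $u_\mu$ gives $K_A=\UT_{(\beta,\cP^A)}$, where $j\prec_{\cP^A}k$ iff $\lambda_{il}=0$ for all $\lambda\in A$ and all $i\leq j<k\leq l$ (a normal subposet of $\cL_\beta$); conversely $\UT_{(\beta,\cP)}=K_{A_\cP}$ for the single $\lambda$ supported on the maximal elements of $\sInt(\cL_\beta)\setminus\sInt(\cP)$. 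If you wish to keep your route, the most economical way to close your gap is to feed this same vanishing criterion into your framework --- but at that point the direct computation already proves the corollary, and your general reduction becomes superfluous.
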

\begin{proof}
Let $A\subseteq \cT_\beta^{\cL_\beta}$, and let $\cP^A$ be the subposet of $\cL_\beta$ given by
$$j\prec_{\cP^A} k \quad \text{if and only if } \quad i\leq j<k\leq l \text{ implies } \lambda_{il}=0\text{ for all $\lambda\in A$}.$$
To check that $\cP^A$ is a poset it suffices to check transitivity.  Suppose $j\prec_{\cP^A} k\prec_{\cP^A} l$.  Then $\lambda_{im}=0$ for all $i\leq j<k\leq m$ and $i\leq k<l\leq m$, so also for all $i\leq j<l\leq m$.  Thus, $j\prec_{\cP^A} l$.  

Next we prove that $K_A=\UT_{(\beta,\cP^A)}$.  Let $\mu\in \cT_\beta^{\cL_\beta}$.  By Theorem \ref{FullFormula}, 
$$\chi_\beta^\lambda(u_\mu)=\chi_\beta^\lambda(1) \qquad \text{for all $\lambda\in A$,}$$
if and only if $i\leq j<k\leq l$ implies $\mu_{jk}\neq 0$ only if $\lambda_{il}=0$ for all $\lambda\in A$.  Thus, $u_\mu\in K_A$ if and only if $u_\mu\in \UT_{(\beta,\cP^A)}$. 

Let $(\beta,\cP)$ be a unipotent polytope.   Let $M$ be the set of elements maximal in $\sInt(\cL_\beta)-\sInt(\cP)$.  Then let $A_\cP=\{\lambda\}\subseteq \cT_\beta^{\cL_\beta}$ be given by
$$\lambda_{jk}=\left\{\begin{array}{ll} 1 & \text{if $(j,k)\in M$},\\
0 & \text{otherwise.}\end{array}\right.$$
Then $\UT_{(\beta,\cP)}=\UT_{(\beta,\cP^{A_\cP})}$ and by the previous argument $\UT_{(\beta,\cP^{A_\cP})}=K_{A_\cP}$.  Note that while the functions $A\mapsto \cP_A$ and $\cP\mapsto A_\cP$ do not invert one-another the corresponding group maps $K_A\mapsto \UT_{(\beta,\cP_A)}$ and $\UT_{(\beta,\cP)}\mapsto K_{A_\cP}$ do invert one-another.
\end{proof}

\begin{remark}
In the case of $\beta=(1^N)$, this result says that the normal pattern subgroups are exactly the normal subgroups identified by the $B_\cN$-supercharacter theory (perhaps justifying their existence from a slightly different point of view).
\end{remark}

\end{document}